\documentclass[10pt, twoside]{article}
\usepackage[english]{babel}
\usepackage[utf8]{inputenc}
\usepackage[a4paper,top=2.25cm,bottom=2.5cm,left=2.25cm,right=2.25cm]{geometry}
\usepackage{indentfirst}
\usepackage{hyperref}
\hypersetup{
    colorlinks,
    allcolors={black},
}

\usepackage{amsmath, amsfonts, amssymb, amsthm, mathtools}
\usepackage{dsfont}
\usepackage{physics}

\usepackage{graphicx}
\usepackage{subcaption}
\usepackage{float}
\usepackage{tikz, pgfplots, quiver}

\usepackage{algorithm}
\usepackage{algorithmic}

\usepackage{enumitem}
\usepackage{booktabs}
\usepackage{mdframed}
\usepackage{siunitx}
\usepackage{url}
\usepackage{cleveref}

\usepackage{authblk}

\newtheorem{theorem}{Theorem}[section]

\newtheorem{proposition}[theorem]{Proposition}
\newtheorem{lemma}[theorem]{Lemma}

\theoremstyle{definition}
\newtheorem{definition}[theorem]{Definition}

\theoremstyle{remark}
\newtheorem{remark}[theorem]{Remark}

\newcommand{\R}{\mathbb{R}}
\newcommand{\N}{\mathbb{N}}

\newcommand{\T}{\mathcal{T}}
\newcommand{\C}{\mathcal{C}}
\newcommand{\F}{\mathcal{F}}
\newcommand{\U}{\mathcal{U}}

\newcommand{\f}{\mathbf{f}}

\newcommand{\control}{\mathbf{u}}
\newcommand{\bfphi}{\boldsymbol{\varphi}}

\newmdenv[
  backgroundcolor=white,
  linecolor=black,
  linewidth=0.5pt,
  skipabove=12pt,
  skipbelow=12pt,
  innertopmargin=6pt,
  innerbottommargin=6pt,
  innerleftmargin=10pt,
  innerrightmargin=10pt,
  frametitle={\begin{center}\bfseries Assumptions H\end{center}},
]{assumptionbox*}

\newcounter{assbox}
\crefformat{assbox}{#2Assumptions~H#3}
\Crefformat{assbox}{#2Assumptions~H#3}

\newenvironment{assumptionbox}{%
  \refstepcounter{assbox}%
  \begin{assumptionbox*}%
}{%
  \end{assumptionbox*}%
}

\newlist{hypotheses}{enumerate}{1}
\setlist[hypotheses,1]{label=\textbf{(H\arabic*)}, ref=\textbf{(H\arabic*)}}

\title{Optimal control of the coagulation-fragmentation equation}
\date{}

\author[1,2]{Enrico Sartor}
\affil[1]{Laboratoire des Signaux et Systèmes, Université Paris-Saclay, CentraleSupélec, CNRS\protect\\
\texttt{enrico.sartor@centralesupelec.fr}}
\affil[2]{Chair for Dynamics, Control, Machine Learning and Numerics, Alexander von Humboldt-Professorship, Department of Mathematics, Friedrich-Alexander-Universit\"at Erlangen-N\"urnberg}

\begin{document}

\maketitle

\begin{abstract}
We formulate and analyse an optimal control problem for the coagulation–fragmentation equation, where a scalar, time-dependent control modulates the coagulation rate by multiplying the coagulation kernel. The objective functional consists of a quadratic penalisation of the control and a terminal cost depending on the final size distribution. In a weighted $L^1$ framework, we prove weak-to-weak continuity of the control-to-state map under perturbations of the coefficients and obtain existence of optimal controls by the direct method. We then establish $\Gamma$-convergence of the corresponding cost functionals, providing stability of optimal controls and justifying truncation of unbounded kernels in the optimisation setting. For bounded coagulation kernels we show differentiability of the dynamics, derive an adjoint equation, and obtain a Pontryagin-type minimum principle. Lipschitz continuity of the gradient with respect to the control yields, at the continuous level, convergence of a projected-gradient algorithm with Armijo backtracking. A proof-of-concept finite-volume implementation is then used in a numerical study targeting the number of particles within a prescribed size window, demonstrating that a single low-dimensional actuator can effectively reshape an infinite-dimensional particle-size distribution.
\end{abstract}

\section{Introduction}\label{sec:intro}

The coagulation-fragmentation equation is a partial differential equation that describes the evolution of particle systems undergoing aggregation and break-up.  It appears in aerosol physics \cite{friedlander2000smoke,seinfeld2016atmospheric}, polymer science\;\cite{van1984size}, blood coagulation\;\cite{samsel1982kinetics,samsel1984kinetics}, and many other disciplines.


Mathematically, letting $f(t,x)$ denote the density of clusters of size $x>0$ at time $t\ge0$, the system's dynamics are then described by the following integro-differential equation
\begin{equation}\label{eqn:  standard cf_equation}
  \partial_t f(t,x)
  = \mathcal{C}f(t,x) + \mathcal{F}f(t,x),
\end{equation}
with the coagulation and fragmentation operators defined respectively as
\begin{equation}\label{eqn: coagulation operator}
      \mathcal{C}f(t,x) = \frac12\int_0^x K(y,x-y)f(t,y)f(t,x-y)\,dy
                      - f(t,x)\int_0^\infty K(x,y)f(t,y)\,dy,
\end{equation}
and
\begin{equation}\label{eqn: fragmentation operator}
      \mathcal{F}f(t,x) = -\alpha(x)f(t,x)
                      + \int_x^\infty \alpha(y)b(x,y)f(t,y)\,dy.
\end{equation}

Here $K(x,y)$ denotes the coagulation kernel, prescribing the rate at which particles of sizes $x$ and $y$ merge to form a particle of size $x+y$. Accordingly, the coagulation operator consists of a gain term, which accounts for the formation of particles of size $x$ as the outcome of binary coalescence of smaller clusters of sizes $y$ and $x-y$, and a loss term, corresponding to their disappearance through coalescence with other particles. Similarly, the fragmentation dynamics are characterised by a break-up rate $\alpha$ and a \emph{daughter distribution} $b$, where $b(x,y)$ denotes the density (with respect to $x$) of producing a fragment of mass $x$ from a particle of mass $y>x$. Thus the fragmentation operator accounts for both the loss of particles due to break-up and the gain of particles of size $x$ arising from the disintegration of larger clusters.

The origins of coagulation modelling trace back to Smoluchowski’s discrete model\;\cite{smoluchowski1916drei,smoluchowski1918versuch}.  Müller soon formulated a continuous-size version~\cite{muller1928allgemeinen}, and Melzak incorporated fragmentation~\cite{melzak1957scalarI,melzak1957scalarII}. The framework most widely used today is due to Vigil and Ziff~\cite{vigil1989stability}. Existence and uniqueness of solutions have been established under varied assumptions; see the survey~\cite{banasiak2019analytic1, banasiak2019analytic}. Current research addresses well-posedness, long-time asymptotics and \emph{self‑similarity}~\cite{carr1992asymptotic,escobedo2005self} as well as \emph{gelation} and \emph{shattering}, the loss of mass through formation of, respectively, an infinite cluster or dust~\cite{hendriks1983coagulation,mcgrady1987shattering}.

\subsection{Motivations}\label{subsec:motivations}

In many applications, coagulation-fragmentation systems are not only used to describe the evolution of particle sizes, but also to steer it toward desired outcomes. Whether the goal is to reduce the concentration of fine particulates, control clot formation, or tune polymer properties, the ability to influence the size distribution has become increasingly important. Two representative examples are the following.

\textbf{Fine‑particle mitigation in aerosols.}  
Sub‑\SI{2.5}{\micro\metre} particulates (PM$_{2.5}$) remain airborne,
penetrate deep into the lungs, and evade ordinary filters.  Raising
relative humidity, injecting trace electrolytes, or altering temperature
promotes droplet coalescence and agglomerates fines into super‑micron
clusters that are more easily captured
\cite{hinds2022aerosol,jacobson2005enhanced}.  Each intervention acts,
to leading order, as a scalar scaling of the collision frequency.

\textbf{Size control in polymer reactors.}  
Surfactants, inhibitors, and initiators tune the rate at which growing
polymer particles merge, thereby fixing latex diameter and molecular
weight distribution
\cite{flory1953principles,matyjaszewski2002handbook}.
Here again the primary effect is a time‑dependent multiplicative factor
on the coagulation kernel.

Both scenarios share key structural features: the control primarily targets coagulation; the system is infinite-dimensional but the control remains low-dimensional; and the goal is typically to optimise a functional of the final size distribution. These characteristics fall naturally within the framework of infinite-dimensional optimal control.

\subsection{The optimal control problem}\label{subsec:formulation}

Motivated by the previous discussion, we introduce a controlled model in which a scalar control 
\begin{equation*}
    \control\in\mathcal{U}\coloneqq \bigl\{\control \in L^2(0,T) \colon \control(t)\in [u_{\min},u_{\max}] \text{ for a.e. } t\in(0,T)\bigr\}.
\end{equation*}
modulates the coagulation rate. Concretely, we replace the original kernel $K(x,y)$ by the time‑dependent kernel
\begin{equation*}
    \hat{K}(t,x,y)\coloneqq\control(t)\,K(x,y).
\end{equation*}
We assume that $0<u_{\min}\leq 1\leq u_{\max}<+\infty$ so that the uncontrolled dynamics correspond to $\control(t)\equiv 1$; choosing $\control(t)>1$ accelerates coagulation, while $\control(t)<1$ slows it down.

With this control in place, the coagulation–fragmentation equation can be rewritten as follows
\begin{equation}\label{eqn: cf_equation}
\partial_t f(t,x)
=
\control(t)\mathcal{C}f(t,x)+\mathcal{F}f(t,x),
\end{equation}
with $\mathcal{C}$ and $\mathcal{F}$ given by~\eqref{eqn: coagulation operator}--\eqref{eqn: fragmentation operator}.  Equation~\eqref{eqn: cf_equation} is a nonlinear nonlocal PDE in which the control acts multiplicatively on the nonlinear (coagulation) part of the dynamics.  For physical reasons, namely a finite number of particles and finite total mass, we study~\eqref{eqn: cf_equation} in a suitably weighted $L^{1}$ space (see Section~\ref{subsec:functional-setting}).

Associated with \eqref{eqn: cf_equation} we consider the optimal control problem
\begin{equation}\label{eqn: OCP}\tag{\bf{OCP}}
  \inf_{\control\in\mathcal{U}}
  \;\frac{w}{2}\int_0^T\bigl(\control(t)-1\bigr)^2\,dt + \psi\bigl(f(T)\bigr),
\end{equation}
where the running cost penalises deviations from the baseline $\control\equiv 1$, that is, the use of the control. The weight $w\geq0$ modulates the strength of this penalisation. A specific terminal cost of practical interest is
\begin{equation*}
    \psi(f)\coloneqq\pm\int_{x_{\min}}^{x_{\max}} f(x)\,dx,
\end{equation*}
so that the objective is to \emph{minimise} or \emph{maximise}  the total number of particles whose sizes lie in the interval $[x_{\min},x_{\max}]$. This simple choice reflects the applications in Section~\ref{subsec:motivations}: suppressing fines in aerosols  or concentrating polymer particles within a desired window. 

\subsection{Contributions and related work}

The main goal of this paper is to analyse the optimal control problem \eqref{eqn: OCP}. At a qualitative level, we prove that, under standard structural assumptions on the coefficients, problem~\eqref{eqn: OCP} admits optimal controls and that these controls are stable under perturbations of the kernels. In the bounded-kernel regime, we further derive a Pontryagin-type minimum principle that characterises optimal controls through an adjoint state evolving in the dual of the natural weighted $L^1$ state space. This leads to an explicit gradient formula, from which we deduce Lipschitz continuity of the reduced gradient and, at the continuous level, convergence of a projected-gradient scheme. Finally, we present an illustrative numerical test case.

\subsubsection{Related work}

Inverse problems for coagulation-fragmentation systems, most notably the
identification of unknown coagulation or fragmentation kernels from
size-distribution data, have been widely studied, both analytically and
numerically, typically via regularisation strategies or short-time
asymptotics; see,  for instance, ~\cite{alomari2013recovery,
doumic2018estimating,doumic2021inverse}.

In a related but distinct direction, optimal and geometric control techniques
have been developed for structured-population and growth--fragmentation models,
in particular in connection with prion amplification and PMCA-type
systems; see, e.g.,~\cite{coron2015optimization,chyba2015optimal}.
These works concern either finite-dimensional approximations or linear
structured dynamics, and therefore do not cover the nonlinear
coagulation-fragmentation equation considered here.

On the applied side, optimal control of population balance models has been
studied extensively in chemical engineering, especially for crystallisation
and granulation processes, where the goal is often to shape particle-size or
crystal-shape distributions; see, for instance, ~\cite{nagy2008distribution,
hofmann2017optimal,de2021optimal}. However, these contributions typically
concern first-order or reduced population-balance models arising in
particle-processing applications, and are primarily oriented toward process
design and numerical optimisation rather than a rigorous optimal-control
analysis of a nonlinear coagulation-fragmentation PDE.

To the best of our knowledge, a rigorous optimal-control theory for nonlinear
coagulation-fragmentation equations treated directly at the PDE level in
weighted $L^1$-type spaces does not seem to be available in the literature.
The present work addresses this gap by formulating and analysing an optimal
control problem directly for the coagulation-fragmentation equation.

\subsubsection{Main contributions}
The analysis proceeds in four steps.

\begin{enumerate}
  \item \textbf{Existence of optimal controls.}
  We first establish weak-to-weak continuous dependence of solutions on the
  control (and on the kernels); see Theorem~\ref{thm: weak dependence on controls}.
  This allows us to prove existence of optimal controls for~\eqref{eqn: OCP}
  by the direct method, provided the terminal cost is weakly lower
  semi-continuous; see Theorem~\ref{thrm: existence of optimal controls}.
  A key technical difficulty is that the control acts multiplicatively on the
  bilinear coagulation operator, so the controlled term is trilinear in
  $(u,f,f)$ and each factor converges only weakly. Overcoming this obstacle
  requires a careful use of compactness in the state space and of a product
  lemma \emph{à la} Egorov (Proposition~\ref{propo: egorov}).

  \item $\boldsymbol{\Gamma}$\textbf{-convergence and stability.}
  The same continuity framework accommodates perturbations of the
  coefficients. We prove $\Gamma$-convergence of the family of cost
  functionals associated with perturbed kernels and show convergence of
  minimisers to those of the limit problem; see Theorem~\ref{thrm: gamma convergence}.
  This extends the classical truncation arguments for unbounded coagulation
  kernels, traditionally used to construct solutions of the PDE, to the optimal
  control setting, and justifies approximating unbounded kernels by bounded
  ones when computing gradients.

  \item \textbf{Adjoint equation, gradient formula, and minimum principle.}
  When the coagulation kernel is bounded, the coagulation operator
  $\mathcal C$ is continuously Fréchet differentiable.
  Linearising the dynamics along a reference trajectory $(\f^\star,\control^\star)$ and
  working in the dual state space, we derive a
  weak-$*$ well-posed backward adjoint equation
  \begin{equation*}
          -\partial_t \varphi^\star(t)
    = \control^\star(t)\,D\mathcal C[\f^\star(t)]^*\varphi^\star(t) + \mathcal F^*\varphi^\star(t),
  \end{equation*}
  whose solution $\bfphi^\star$ yields an explicit gradient representation
  \begin{equation*}
          \nabla_u J(\control^\star)(t)
    = w\bigl(\control^\star(t)-1\bigr)
      + \big\langle \bfphi^\star(t),\,\mathcal C \f^\star(t)\big\rangle,
  \end{equation*}
  see Theorem~\ref{thrm: gradients}. Because the state space is a nonreflexive weighted $L^1$ space, and because the adjoint fragmentation operator is defined only on a weak-$*$ dense domain, the adjoint equation has to be formulated in a mild weak-$*$ sense. From the adjoint system we derive a Pontryagin-type
  minimum principle and a pointwise characterisation of optimal controls as
  projections of the unconstrained minimiser of the Hamiltonian; see
  Theorem~\ref{thm: PMP}.   Under a natural Lipschitz assumption on the terminal cost, we also show that
  the gradient $\nabla_u J\colon\U\to L^2(0,T)$ is Lipschitz continuous (Theorem~\ref{thrm: gradients}). At the continuous level, this regularity yields convergence of the projected-gradient method with Armijo backtracking.

  \item \textbf{Numerical illustration.}
  As a proof-of-concept, we discretise the state and adjoint equations with
  a finite–volume scheme and solve \eqref{eqn: OCP} via projected
  gradient descent with Armijo backtracking. The numerical example,
  inspired by aerosol physics applications, demonstrates that a single
  low-dimensional actuator can substantially reshape the full
  size-distribution within a prescribed window. This section is intended as an illustration only: we do not address discrete adjoint consistency or convergence of the fully discretised optimisation scheme.
\end{enumerate}

Historically, well-posedness for the coagulation-fragmentation equation has mainly been studied through two approaches: the semigroup method, initiated by \cite{aizenman1979convergence}, and the truncation and weak-compactness strategy \emph{à la} Stewart \cite{stewart}. These two approaches are based on different notions of solution: the former relies on mild semigroup solutions, while the latter uses a pointwise formulation; see Definitions~\ref{def: mild solution} and \ref{def: solution}, respectively. In the present work, we adopt the latter notion in order to establish existence of optimal controls and the associated
$\Gamma$-convergence result. We also believe that this formulation is better suited to possible extensions, for instance to size-dependent controls. By contrast, the semigroup framework is the natural setting for the analysis of the linearised dynamics and the adjoint equation. It is therefore necessary, in the bounded-coagulation regime, to prove that these two notions of solution coincide. While such an equivalence is expected, to the best of our knowledge it does not seem to be available in the literature in the weighted setting considered in this paper.

\section{The controlled coagulation–fragmentation equation}

In this section we introduce the functional framework and recall the basic properties
of the controlled coagulation–fragmentation equation
\eqref{eqn: cf_equation}. We fix the state space, specify the notion of
solution, state the standing structural assumptions on the kernels, and
collect the mapping properties of the operators $\C$ and $\F$ together
with a well-posedness result for the controlled dynamics.

\subsection{Functional setting}\label{subsec:functional-setting}

We represent the system by a number density $f\colon (0,\infty)\to \R_{\geq 0}$, where $f(x)$ counts particles of mass $x$.
Physical feasibility requires finite particle number and finite total mass, i.e.,
\begin{equation*}
    M_0(f) \coloneqq \norm{f}_0=\int_0^\infty |f(x)|\,dx < \infty,
    \qquad
    M_1(f) \coloneqq \norm{f}_1=\int_0^\infty x\,|f(x)|\,dx < \infty.
\end{equation*}
It is therefore natural to work in the weighted space
\begin{equation*}
      X_{0,1} \coloneqq L^1\bigl((0,\infty), (1+x) dx\bigr) = X_0 \cap X_1,\quad X_0\coloneqq L^1\bigl((0,\infty)\bigr),\quad X_1\coloneqq L^1\bigl((0,\infty), x dx\bigr)
\end{equation*}
which is a Banach space under
\begin{equation*}
    \norm{f}_{0,1}\coloneqq \int_0^\infty (1+x) \lvert f(x)\rvert dx.
    \end{equation*}
Here $X_0$ controls the particle number $M_0$, $X_1$ controls the mass $M_1$, and their intersection encodes the regime relevant for our controlled dynamics and costs.

We denote by $C([0,T],X_{0,\mathrm w})$ the space of maps $\f:[0,T]\to X_0$ that are continuous with respect to the weak topology of $X_0$, that is, such that for every $\varphi\in X_0^*=L^\infty(0,\infty)$ the scalar map
\begin{equation*}
    t \longmapsto \int_0^\infty f(t,x)\varphi(x)\,dx
\end{equation*}
is continuous on $[0,T]$.

We say that a sequence $(\mathbf \f_n)_{n\in\mathbb N}\subset C([0,T],X_{0,\mathrm w})$ converges uniformly in time in the weak topology of $X_0$ to $\f^\star\in C([0,T],X_{0,\mathrm w})$ if, for every $\varphi\in L^\infty(0,\infty)$,
\begin{equation*}
    \sup_{t\in[0,T]}
    \left|
    \int_0^\infty \bigl(\mathbf \f_n(t,x)-\f^\star(t,x)\bigr)\varphi(x)\,dx
    \right|
    \longrightarrow 0
    \qquad \text{as } n\to\infty.
\end{equation*}

\subsection{The notion of solution and standing assumptions}\label{sec: notion of solution}

We next fix the solution concept \cite{stewart} for the controlled equation and list the structural assumptions on $K$, $\alpha$, and $b$.

\begin{definition}\label{def: solution}
Let $\control\in\mathcal U$ be fixed, and let $f_{\mathrm{in}}\in X_{0,1}$ satisfy $f_{\mathrm{in}}\geq 0$. A \emph{solution} of \eqref{eqn: cf_equation} with initial condition $f_{\mathrm{in}}$ and control $\control$ is a function $\f\colon [0,T]\to X_{0,1}$ such that for a.e.\ $x\in(0,\infty)$:
\begin{itemize}
    \item [\textbf{(S1)}] \label{hyp:s1}$\f(0,x) = f_{\mathrm{in}}(x)$;
    \item [\textbf{(S2)}] \label{hyp:s2}$\f(t,x)\geq 0$ for every $t\in[0,T]$;
    \item [\textbf{(S3)}] \label{hyp:s3} $t\mapsto \f(t,x)$ is continuous;
    \item[\textbf{(S4)}]\label{hyp:s4} for every $t\in[0,T]$
    \begin{equation*}
        \int_0^t \int _0^\infty K(x,y) \f(\tau,y) dy d\tau <+\infty \hspace{0.75cm}\text{ and }\hspace{0.75cm}\int_0^t \int_x^\infty b(x,y) \alpha(y) \f(\tau,y) dy d\tau <+\infty;
    \end{equation*}
    \item[\textbf{(S5)}]\label{hyp:s5} for every $t\in[0,T]$ it holds
    \begin{equation}\label{eqn: solution equation}
        \f(t,x) = f_{\mathrm{in}}(x)+\int _0^t \biggl[\control(\tau) \C \f(\tau,x) + \F \f(\tau,x)\biggr] d\tau.
    \end{equation}
\end{itemize}
\end{definition}

\begin{remark}
Before proceeding, let us comment on Definition~\ref{def: solution}.
Condition \textbf{(S2)} reflects the interpretation of $\f(t,\cdot)$ as a particle
density for every $t\in[0,T]$, and therefore imposes nonnegativity.
Condition \textbf{(S3)} prescribes the time regularity of the solution in the
sense of Stewart \cite{stewart}. Finally, condition \textbf{(S5)} gives the integral formulation
of \eqref{eqn: cf_equation}; in particular, for a.e.~$x\in(0,\infty)$, it
requires that $\tau\mapsto \control(\tau)\,\C\f(\tau,x)+\F\f(\tau,x)$
belongs to $L^1(0,T)$. 

Since the gain contributions are nonnegative, while the loss contributions are
nonpositive, the right-hand side of \eqref{eqn: solution equation} could, a
priori, involve cancellations between terms that are not separately integrable. Condition \textbf{(S4)}, together with \textbf{(S3)}, which yields boundedness of $\f(\cdot,x)$ for a.e.\ $x$, ensures the time integrability of the coagulation-loss term, the fragmentation-loss term, and the fragmentation-gain
term, thereby excluding such ambiguities and making the splitting in
\eqref{eqn: solution equation} meaningful. More precisely, for a.e.~$x\in(0,\infty)$
and every $t\in[0,T]$, \eqref{eqn: solution equation} can be rewritten as
\begin{align*}
    \f(t,x)=f_{\mathrm{in}}(x)
    &+\frac12\int_0^t \control(\tau)\int_0^x
    K(y,x-y)\,\f(\tau,y)\,\f(\tau,x-y)\,dy\,d\tau \\
    &-\int_0^t \control(\tau)\,\f(\tau,x)\int_0^\infty
    K(x,y)\,\f(\tau,y)\,dy\,d\tau \\
    &-\int_0^t \alpha(x)\,\f(\tau,x)\,d\tau
    +\int_0^t \int_x^\infty b(x,y)\,\alpha(y)\,\f(\tau,y)\,dy\,d\tau .
\end{align*}
\end{remark}

We make the following assumptions on the coagulation kernel $K$, fragmentation rate $\alpha$ and daughter distribution $b$.

\begin{assumptionbox}
\label{ass: assumptions}
$K$, $\alpha$ and $b$ satisfy:
\begin{hypotheses}
    \item \label{hyp:symmetry} $K\colon (0,\infty)\times(0,\infty)\to\R$ is measurable and satisfies $K(x,y)=K(y,x)\geq 0$ for every $x,y>0$;
    \item \label{hyp:bound} there exists $K_0\geq 0$ and $0\leq \mu<1$ such that $K(x,y)\leq K_0(1+x)^\mu (1+y)^\mu$ for every $x,y\in(0,\infty)$;
    \item \label{hyp:alpha} $\alpha(x)=\alpha_0 x^\lambda$ with $\alpha_0\geq 0$ and $1> \lambda>\mu$;
    \item \label{hyp:b} there exists $\nu\in (-1,0]$ such that
    \begin{equation*}
        b(x,y) = \frac{2 + \nu}{y}\biggl(\frac{x}{y}\biggr)^{\nu} \mathds{1}_{(0,y)}(x).
    \end{equation*}
\end{hypotheses}
\end{assumptionbox}

Several remarks regarding these assumptions are in order.

\begin{remark}
We assume that the coagulation kernel $K$ is symmetric (see Hypothesis~\ref{hyp:symmetry}), which is natural from a modelling viewpoint: the rate at which a particle of size $x$ coagulates with one of size $y$ should coincide with the rate at which a particle of size $y$ coagulates with one of size $x$. The growth condition in Hypothesis~\ref{hyp:bound}, together with the requirement $\lambda>\mu$ (ensuring that fragmentation dominates coagulation), is customary in order to rule out the well-known phenomenon of gelation \cite{hendriks1983coagulation}, that is, the loss of mass at infinity in finite time. 

The growth assumption on $\alpha$ is standard when working in the space $X_{0,1}$.  For faster fragmentation rates, one typically needs control of
higher moments and therefore a stronger weighted setting, such as
$L^1((0,\infty),(1+x^p)\,dx)$ for some $p>1$, or related interpolation
spaces; see, for instance, \cite[Chapter 5]{banasiak2019analytic1}.
\end{remark}

\begin{remark}
Two elementary consistency checks justify the daughter law in
Assumption~\ref{hyp:b}:
\begin{enumerate}[label=(\roman*)]
  \item \textbf{Local conservation of mass.}  
        During a single break–up the parent mass is only redistributed:
        \begin{equation}\label{eqn:local conservation of mass}
          \int_0^y x\,b(x,y)\,d x = y.
        \end{equation}
  \item \textbf{Finite number of fragments.}  
        The total number of fragments produced by a fragmentation event is
        \begin{equation}\label{eqn:finite number of fragments}
          \int_0^y b(x,y)\,dx=
          \frac{\nu+2}{\nu+1}\coloneqq N_\nu<+\infty.
        \end{equation}
\end{enumerate}
The special case $\nu=0$ yields the classical \emph{binary
fragmentation} kernel $b(x,y)=2/y$, for which $N_0=2$.

\smallskip
Both identities follow from a short calculation (omitted here) and
highlight why the structural conditions on $b$ are essential: they rule
out unphysical scenarios such as mass creation and loss or an infinite burst of
particles from a single fracture.
\end{remark}

\begin{remark}
The fragmentation rate $\alpha$ and daughter distribution $b$ prescribed in
Assumptions~\ref{hyp:alpha} and \ref{hyp:b} form one of the most widely used kernel
families in applications~\cite{ziff1985kinetics, mcgrady1987shattering,  timar2010new, andrejevic2021model}.  Their
appeal rests on three complementary features:
\begin{enumerate}[label=(\roman*)]
  \item \textbf{Mathematical tractability.} The power-law rate and self-similar daughter law fit naturally within weighted-space well-posedness theory and lead to convenient moment estimates.
  \item \textbf{Size dependence with scale invariance.}  Larger particles break more rapidly, yet the statistics of each split depend only on the ratio $x/y$, in line with experimental evidence.
  \item \textbf{Data-friendly tuning.}  The family is completely characterised by just three parameters, $\alpha_0$, $\lambda$, and $\nu$, which facilitates calibration from limited or noisy data.
\end{enumerate}
This blend of physical realism, analytical simplicity, and ease of calibration explains why the family has become canonical in the fragmentation literature.
\end{remark}

\begin{remark}\label{remark: assumptions}
Assumptions~\ref{hyp:alpha} and \ref{hyp:b}, although standard and widely used in practice, can be relaxed. The analysis below only relies on the following properties:
\begin{enumerate}[label=(\roman*)]
\item the uncontrolled coagulation–fragmentation equation is well-posed in $X_{0,1}$ in the sense of Definition~\ref{def: solution};
\item mass is conserved and each fragmentation event produces only finitely many fragments;
\item  for every $R>0$, $y\in(0,\infty)$ and every Borel set $E\subseteq(0,R)$,
\begin{equation}\label{eqn: holder inequality}
    \int_0^{\min\{y,R\}}
    \mathds{1}_E(x)\,b(x,y)\,\alpha(y)\,dx \leq \rho_R(\lvert E\rvert)(1+y^q),
\end{equation}
where $\lvert E\rvert$ denotes the Lebesgue measure of $E$, $0\leq q<1$ and $\rho_R\colon[0,\infty)\to[0,\infty)$ satisfies $\rho_R(\varepsilon)\to 0$ as $\varepsilon\to 0$;
\item the fragmentation operator generates a $C_0$-semigroup on $X_{0,1}$ (see Proposition~\ref{propo:fragmentation semigroup}).
\end{enumerate}
Condition~(iii) is satisfied by our kernels. Indeed, since $x^\nu$ is nonnegative and nonincreasing on $(0,\infty)$ for $\nu\in(-1,0]$, for every Borel set $E\subset(0,R)$ and every $y>0$ we have
\begin{align*}
    \int_0^{\min\{y,R\}} \mathds{1}_E(x)\,b(x,y)\,\alpha(y)\,dx
    &= \alpha_0(\nu+2)\,y^{\lambda-(\nu+1)}
       \int_0^{\min\{y,R\}} \mathds{1}_E(x)\,x^\nu\,dx \\
    &\le \alpha_0(\nu+2)\,y^{\lambda-(\nu+1)}
       \int_0^{\min\{|E|,y\}} x^\nu\,dx \\
    &= \alpha_0\frac{\nu+2}{\nu+1}\,
       y^{\lambda-(\nu+1)}\min\{|E|,y\}^{\nu+1}.
\end{align*}
Therefore, choosing $q\coloneqq \max\{0,\lambda-(\nu+1)\}$ we have $q\in[0,1)$ and
a simple case distinction on $y\le |E|$ and $y>|E|$ shows that
\begin{equation*}
        y^{\lambda-(\nu+1)}\min\{|E|,y\}^{\nu+1}
    \le C\bigl(|E|^\lambda+|E|^{\nu+1}\bigr)(1+y^q),
\end{equation*}
for some constant $C>0$. Hence condition~(iii) holds with
\begin{equation*}
        \rho_R(\varepsilon)\coloneqq C\bigl(\varepsilon^\lambda+\varepsilon^{\nu+1}\bigr).
\end{equation*}
Assumptions~(i)–(iii) are fulfilled by a fairly large class of kernels and already suffice to prove existence of optimal controls, as well as our main convergence result, provided that the functions $\rho_R$ can be chosen uniformly for all kernels involved. The semigroup property in~(iv) is more restrictive, but it still holds for a broad class of fragmentation kernels, see, for instance, ~\cite[Chapter~5.1.4]{banasiak2019analytic}. We use~(iv) only in the derivation of the Pontryagin minimum principle and in the gradient computations.
\end{remark}

\subsubsection{The coagulation and fragmentation operators}\label{subsubsec: operators}

We present several properties that the coagulation and fragmentation operators possess under \Cref{ass: assumptions}.

First, we split the coagulation operator into its gain and loss parts: $\mathcal{C}f = \mathcal{C}_g f + \mathcal{C}_l f$, where
\begin{align}
    \mathcal{C}_g f(x)
    &\coloneqq \frac12 \int_0^x K(y,\,x-y)\,f(y)\,f(x-y)\,\mathrm{d}y,\\
    \mathcal{C}_l f(x)
    &\coloneqq -\,f(x)\int_0^\infty K(x,y)\,f(y)\,\mathrm{d}y.
\end{align}

The fragmentation operator $\F$ is an unbounded linear operator with domain
\begin{equation*}
    \mathrm{dom}(\mathcal{F})
=
\bigl\{\,f\in X_{0,1} : \alpha\,f \in X_{0,1}\bigr\}.
\end{equation*}
It can also be decomposed into its gain and loss parts, $    \mathcal{F}f(x)=
    \mathcal{F}_g f(x)
    + \mathcal{F}_l f(x)$, by setting
\begin{equation}
    \mathcal{F}_g f(x)=\int_x^{\infty} b(x,y)\,\alpha(y)\,f(y)\,\mathrm{d}y, \hspace{1.5cm}
    \mathcal{F}_l f(x)=-\alpha(x)\,f(x).
\end{equation}
The fragmentation operator generates a semigroup on $X_{0,1}$.  The detailed proof can be found in Theorem 5.1.28 and Corollary 5.1.29 in \cite[Chapter 5]{banasiak2019analytic}. In particular, Theorem 5.1.28 provides more general conditions than \Cref{ass: assumptions} for the existence of the fragmentation semigroup in $X_{0,1}$.

\begin{proposition}\label{propo:fragmentation semigroup}
Under assumptions \ref{hyp:alpha} and \ref{hyp:b}, the fragmentation operator $\mathcal{F}\colon \mathrm{dom}(\mathcal{F}) \subseteq X_{0,1}\;\longrightarrow\;X_{0,1}$
is densely defined, closed and generates a $C_0$-semigroup $\T_\F$ on $X_{0,1}$. In particular there exists $M_\F>0$ and $\omega_\F\in\R$ such that
\begin{equation*}
    \norm{\T_\F(t)}\leq M_\F e^{\omega_\F t}\qquad \forall t\geq 0.
\end{equation*}
\end{proposition}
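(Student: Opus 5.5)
The plan is to realise $\F$ as a perturbation of its loss part and apply the Kato–Voigt substochastic perturbation theorem in the weighted space $X_{0,1}$, together with a Miyadera–Desch-type argument to control the zeroth moment. As the paper notes, this is essentially Theorem~5.1.28 and Corollary~5.1.29 of \cite[Chapter 5]{banasiak2019analytic}; the proof sketched here checks their hypotheses for the power-law kernels \ref{hyp:alpha}--\ref{hyp:b}.

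\textbf{Step 1: the loss part.} The operator $\F_l f=-\alpha f$ on $\mathrm{dom}(\F)$ is a multiplication operator by a nonpositive measurable function. It therefore generates the positive contraction semigroup $f\mapsto e^{-\alpha t}f$ on $X_{0,1}$. This semigroup is strongly continuous by dominated convergence, and $\mathrm{dom}(\F)$ is dense because it contains the functions with compact support in $(0,\infty)$.

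\textbf{Step 2: the gain part.} I would check that $\F_g$ is positive and well defined on $\mathrm{dom}(\F)$. The identities \eqref{eqn:local conservation of mass} and \eqref{eqn:finite number of fragments} give, for $0\le f\in\mathrm{dom}(\F)$,
\begin{equation*}
\int_0^\infty (1+x)\,\F_g f\,dx=\int_0^\infty (N_\nu+y)\,\alpha(y)f(y)\,dy .
\end{equation*}
The $x$-weighted part is exactly conservative. The unweighted part produces an excess $(N_\nu-1)\alpha f$, so the pair $(\F_l,\F_g)$ is not substochastic in the $(1+x)$ norm.

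\textbf{Step 3: renorming and perturbation.} To restore substochasticity I would pass to the equivalent norm with weight $w_m(x)=m+x$ for some large $m$, or, more cleanly, split off a bounded piece. Fix $R>0$.
\begin{itemize}
\item For $y\ge R$, the mass weight dominates: one has $(N_\nu-1)m\,\alpha(y)\le \alpha(y)\,y\,\theta$ as soon as $m\le \theta R/(N_\nu-1)$, for a fixed $\theta<1$.
\item For $y<R$, $\alpha$ is bounded by $\alpha_0R^\lambda$, so the corresponding piece of $\F_g$ is a bounded operator on $X_{0,1}$.
\end{itemize}
Hence $\F$ equals a substochastic pair with $\theta<1$ plus a bounded operator. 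The Kato–Voigt theorem with strict inequality $\theta<1$ (via the Desch/Miyadera criterion) shows that the closure of $\F_l+\F_g$ is exactly $\F_l+\F_g$ on $\mathrm{dom}(\F)$ and generates a positive semigroup. Bounded perturbation then adds the remaining piece, yields $\|\T_\F(t)\|\le M_\F e^{\omega_\F t}$ with $\omega_\F$ governed by $\alpha_0R^\lambda N_\nu$, and gives closedness.

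\textbf{Main obstacle.} The delicate point is identifying the generator as $\F$ on the full $\mathrm{dom}(\F)$, i.e.\ showing that the Kato–Voigt generator is the closure and that no boundary condition at $x=0$ or mass loss intervenes. I would handle this by checking that $\|\F_g(\lambda-\F_l)^{-1}\|<1$ in the renormed space, using the estimate from Step 3. A Neumann series then gives $(\lambda-\F)^{-1}$ explicitly on $X_{0,1}$, from which closedness and the Hille–Yosida bound follow.
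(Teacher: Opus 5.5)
The gap is in Step~3 and in the Neumann-series argument of your ``main obstacle'' paragraph. With the weight $m+x$ one has $\int_0^y (m+x)\,b(x,y)\,dx-(m+y)=m(N_\nu-1)>0$ for every $y$ and every $m>0$. As you observed in Step~2, $\int_0^y x\,b(x,y)\,dx=y$ exactly, so the mass part of the weight produces no net loss at all. There is therefore no term $-\theta\,y\,\alpha(y)$ that could absorb the excess $m(N_\nu-1)\alpha(y)$. The inequality $(N_\nu-1)m\,\alpha(y)\le\theta\,y\,\alpha(y)$ compares two numbers but is not a substochastic estimate, and splitting off the bounded part $y<R$ does not change this. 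Quantitatively, for every $s>0$ the operator norm in the $(m+x)$-weighted space is $\|\F_g(s-\F_l)^{-1}\|=\sup_{y>0}\frac{\alpha(y)}{s+\alpha(y)}\,\frac{mN_\nu+y}{m+y}\ge 1$, because the supremand tends to $1$ as $y\to\infty$. So the Neumann series is never available. A strict inequality of the kind you want exists only for higher moments: $\int_0^y x^p\,b(x,y)\,dx=\frac{\nu+2}{\nu+p+1}\,y^p<y^p$ for $p>1$, i.e.\ in $L^1((0,\infty),(1+x^p)\,dx)$, not in $X_{0,1}$.

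This cannot be repaired, because $\F$ with domain $\mathrm{dom}(\F)=\{f:\alpha f\in X_{0,1}\}$ is not closed in $X_{0,1}$. Take $f_n=x^{-2-\lambda}\mathds{1}_{[1,n]}$; the profile $x^{-2-\lambda}$ is stationary for pure fragmentation. A direct computation gives $\F f_n=\alpha_0 x^\nu(1-n^{-2-\nu})$ on $(0,1)$, $\F f_n=-\alpha_0x^\nu n^{-2-\nu}$ on $[1,n)$, and $\F f_n=0$ on $(n,\infty)$. Hence $\|f_n\|_{0,1}$ and $\|\F f_n\|_{0,1}$ stay bounded, while $\|\alpha f_n\|_{0,1}\ge\alpha_0\log n$. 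Since $\|\F_g f\|_{0,1}\le N_\nu\|\alpha f\|_{0,1}$, closedness of $(\F,\mathrm{dom}(\F))$ would give, by the open mapping theorem, $\|\alpha f\|_{0,1}\le C(\|f\|_{0,1}+\|\F f\|_{0,1})$ on $\mathrm{dom}(\F)$, and this sequence violates that. So the ``closed'' part of the proposition, read literally with this domain, is too strong. The paper gives no proof of its own, only the citation to \cite{banasiak2019analytic}.

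What the Kato--Voigt route does give, and what the later arguments (e.g.\ the Trotter--Kato step) rely on, is that the closure $\overline{\F}$ generates $\T_\F$ and $\mathrm{dom}(\F)$ is a core. To get it, shift instead of renorming. For $c\ge(N_\nu-1)\alpha_0\sup_{y>0}y^\lambda/(1+y)$ one has $\int_0^\infty(1+x)\bigl[(\F_l-c)f+\F_g f\bigr]dx\le 0$ for $0\le f\in\mathrm{dom}(\F)$. The Kato--Voigt theorem then yields a positive contraction semigroup $(S(t))_{t\ge0}$ whose generator $G$ extends $\F-c$ on $\mathrm{dom}(\F)$. Setting $\T_\F(t)=e^{ct}S(t)$ gives the stated bound with $M_\F=1$ and $\omega_\F=c$. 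Showing $G+c=\overline{\F}$ is then an honesty (no loss of mass) statement, which is where the boundedness of $\alpha$ near $x=0$ enters; it is not a strict-inequality statement.
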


\subsection{Well-posedness of the controlled equation}

Under \Cref{ass: assumptions} and the boundedness of the control $\control$, the classical well-posedness theory for coagulation–fragmentation equations extends with straightforward modifications to the controlled case. Existence is based on a truncation argument together with weak compactness in $L^1$ that allows for the recovery of the solution of the equation as limit of the solutions of the truncated problem. This technique is standard in the coagulation-fragmentation equations literature, see, for example, \cite[Theorem 8.2.23]{banasiak2019analytic} for a proof of existence in our setting. For uniqueness, we instead refer to \cite{giri2012uniqueness}. Hence, the controlled coagulation–fragmentation equation \eqref{eqn: cf_equation} is well-posed. Moreover, solutions are mass preserving and the growth of the number of particles is controlled.

\begin{theorem}\label{theorem: well posedness}
Given an initial condition $f_{\mathrm{in}}\in X_{0,1}$ and a control $\control\in \U$, there exists a unique solution $\f_\control$ of \eqref{eqn: cf_equation} with such initial condition and control. Moreover, such a solution is mass-conserving, namely
\begin{equation}\label{eqn: conservation of mass}
    M_1\bigl(\f_\control(t)\bigr)=M_1\bigl(f_{\mathrm{in}}\bigr)\qquad \forall t\in [0,T],
\end{equation}
and there exists $\mathbf{C}_{0,1}>0$ depending only on $\alpha$ and $b$ such that for every $t\in [0,T]$
\begin{equation}\label{eqn: M_0 estimate}
    M_0\bigl(\f_\control(t)\bigr)\leq \norm{\f_\control(t)}_{0,1} \leq \norm{f_{\mathrm{in}}}_{0,1} \mathbf{C}_{0,1} e^{\mathbf{C}_{0,1}t}
\end{equation}
\end{theorem}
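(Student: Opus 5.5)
The plan is to follow the classical truncation and weak-compactness route \emph{à la} Stewart, carrying the bounded factor $\control(t)\in[u_{\min},u_{\max}]$ along. Existence, mass conservation and the $X_{0,1}$ bound are proved together at the truncated level and then passed to the limit. Uniqueness is handled separately by a weighted $L^1$ contraction estimate, as in \cite{giri2012uniqueness}.

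\textbf{Step 1 (truncated problems).} For $n\in\N$, set
\begin{equation*}
K_n\coloneqq K\,\mathds{1}_{(0,n)^2}, \qquad \alpha_n\coloneqq \alpha\,\mathds{1}_{(0,n)},
\end{equation*}
and pose the equation on $(0,n)$ with $f_{\mathrm{in}}\mathds{1}_{(0,n)}$. The truncated right-hand side $\control(t)\C_n f+\F_n f$ is locally Lipschitz on $L^1(0,n)$ and measurable in $t$, because $\control\in L^\infty$. A Carathéodory/Picard argument therefore gives a unique local solution. Nonnegativity follows by rewriting the equation as $\partial_t f + f\,(\control\, L_n f+\alpha_n)=\text{gain}\geq 0$ and using the integrating factor.

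Next, test the truncated equation with $x$ and with $1$. Symmetry \ref{hyp:symmetry} and the identity \eqref{eqn:local conservation of mass} make the mass flux vanish, so $M_1$ is conserved. For $M_0$, coagulation decreases the particle number, since the coagulation contribution to $\tfrac{d}{dt}M_0$ is $-\tfrac{\control}{2}\iint K f f\le0$. By \eqref{eqn:finite number of fragments}, fragmentation adds at most $(N_\nu-1)\int\alpha f$. Since $\alpha(x)\le\alpha_0(1+x)$ because $\lambda<1$, we get
\begin{equation*}
\frac{d}{dt}M_0\le (N_\nu-1)\,\alpha_0\,\bigl(M_0+M_1\bigr).
\end{equation*}
Gronwall then yields \eqref{eqn: M_0 estimate} with $\mathbf C_{0,1}$ depending only on $\alpha_0$ and $\nu$, uniformly in $n$ and $\control$. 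This also gives global existence of each truncated solution.

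\textbf{Step 2 (compactness).} Uniform integrability at infinity follows from a superlinear moment bound. By de la Vallée-Poussin, pick a convex $\Phi$ with $\int\Phi(x)f_{\mathrm{in}}<\infty$ and $\Phi(x)/x\to\infty$, and control $\int \Phi(x) f_n(t,x)\,dx$ uniformly. Coagulation is handled with the usual convexity inequality, and fragmentation only decreases such moments of $x$ in the relevant direction.

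Local equi-integrability on bounded sets $E\subset(0,R)$ is obtained by estimating $\int_E f_n(t)$. The coagulation gain is controlled through \ref{hyp:bound} and the quantity $\sup_{|E|\le\delta}\int_E f_n$. The fragmentation gain is controlled through \eqref{eqn: holder inequality}, which gives
\begin{equation*}
\rho_R(|E|)\int(1+y^q)f_n\le \rho_R(|E|)\,C\,\|f_n\|_{0,1},
\end{equation*}
and the resulting inequality is closed with Gronwall.

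Weak equicontinuity in time follows from the uniform bound on the right-hand side in $L^1(0,R)$ for each fixed $R$. By Dunford–Pettis and an Arzelà–Ascoli argument in $C([0,T],X_{0,\mathrm w})$, a subsequence converges to some $f$ uniformly in time for the weak topology.

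\textbf{Step 3 (limit).} This is the main obstacle: passing to the limit in the quadratic coagulation terms and in the unbounded loss terms, and recovering the pointwise-in-$x$ formulation \textbf{(S3)}–\textbf{(S5)}. Because $\control$ is fixed here, it enters only as a bounded time weight, so no trilinear difficulty arises.

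The first step is to work with integrals against test functions $\varphi\in L^\infty$ with compact support. The quadratic terms converge because $f_n(t)\otimes f_n(t)\rightharpoonup f(t)\otimes f(t)$ weakly in $L^1$ of bounded sets, and the kernel is bounded there. The tails are controlled by the superlinear moment together with $\mu<1$ and $\lambda<1$.

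This gives the weak form, from which the a.e.-$x$ integral identity \eqref{eqn: solution equation} follows with integrable right-hand side, and hence \textbf{(S3)}–\textbf{(S5)}. Mass conservation passes to the limit thanks to the uniform superlinear moment, which prevents mass from escaping to infinity. The bound \eqref{eqn: M_0 estimate} passes to the limit by weak lower semicontinuity of the norm.

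Finally, uniqueness follows by estimating $\int(1+x)\,|f-g|$ for two solutions, using the sign of $\mathrm{sgn}(f-g)$ against the loss terms and the bound \ref{hyp:bound}, as in \cite{giri2012uniqueness}. The factor $\control\le u_{\max}$ only enters the constants.
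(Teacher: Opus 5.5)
Your architecture (truncation, Dunford--Pettis compactness, Arzel\`a--Ascoli in $C([0,T],X_{0,\mathrm w})$, identification of the limit, uniqueness via \cite{giri2012uniqueness}) is the one the paper relies on. The paper does not carry it out; it cites \cite[Theorem~8.2.23]{banasiak2019analytic} for existence and \cite{giri2012uniqueness} for uniqueness. Your direct inequality $\frac{d}{dt}M_0\le (N_\nu-1)\alpha_0(M_0+M_1)$ is a clean route to \eqref{eqn: M_0 estimate}.

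There is, however, a genuine gap in Step~2, which is exactly where mass conservation of the limit is decided. The inequality $\Phi(x+y)-\Phi(x)-\Phi(y)\le 2\,\frac{x\Phi(y)+y\Phi(x)}{x+y}$ closes a Gr\"onwall estimate for $\int\Phi f_n$ when $K(x,y)\le C(2+x+y)$. The bound $K_0(1+x)^\mu(1+y)^\mu$ is of that type only for $\mu\le 1/2$. For $\mu\in(1/2,1)$ the best it gives is
\[
\frac{\control(t)}{2}\iint\bigl[\Phi(x+y)-\Phi(x)-\Phi(y)\bigr]K(x,y)f_n(x)f_n(y)\,dx\,dy\le C\,\norm{f_n}_{0,1}\int_0^\infty \Phi(x)(1+x)^\mu f_n(x)\,dx .
\]
The right-hand side is a strictly higher moment, so the estimate does not close. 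This cannot be repaired inside the coagulation term. Your argument never uses $\alpha_0>0$ or $\lambda>\mu$, so it would apply verbatim to pure coagulation with $K(x,y)=(xy)^\mu$, $\mu\in(1/2,1)$. That kernel is admissible under \ref{hyp:bound}, and under \ref{hyp:alpha} with $\alpha_0=0$. For it, solutions gel in finite time, and by scaling they do so before any prescribed $T$ for large data. There no uniform superlinear moment bound and no mass conservation can hold. This also shows that the statement itself tacitly requires $\alpha_0>0$ when $\mu>1/2$.

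The missing idea is to keep the fragmentation contribution rather than discard it because of its sign. For convex $\Phi$ with $\Phi(0)=0$,
\[
\int_0^\infty\Phi\,\F f\,dx=-\int_0^\infty\alpha(y)\Bigl[\Phi(y)-\int_0^y\Phi(x)b(x,y)\,dx\Bigr]f(y)\,dy\le -c_\nu\alpha_0\int_0^\infty y^\lambda\bigl[\Phi(y)-2\Phi(y/2)\bigr]f(y)\,dy,
\]
with $c_\nu>0$. Choose the de la Vall\'ee-Poussin function regular enough that $\Phi(y)-2\Phi(y/2)\ge c\,y^{-\delta}\Phi(y)$ for large $y$, with $0<\delta<\lambda-\mu$. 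Then this dissipation absorbs the coagulation growth $\Phi(y)(1+y)^\mu$ at large sizes and yields $\sup_{n,t}\int\Phi f_n(t)<\infty$. This is the only place where $\lambda>\mu$ and $\alpha_0>0$ enter, and it is what upgrades $M_1(\f(t))\le M_1(f_{\mathrm{in}})$ to equality in the limit.

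Two smaller points.
\begin{itemize}
\item With $K_n=K\mathds{1}_{(0,n)^2}$ posed on $(0,n)$, pairs with $x,y<n\le x+y$ leave the domain, so $M_1$ is only nonincreasing at the truncated level. Use $K\mathds{1}_{\{x+y<n\}}$ as in \eqref{eqn: truncated kernels}; your $M_0$ bound survives either way.
\item In the uniqueness sketch, \ref{hyp:bound} does not merely affect constants. The weighted estimate formally gives $\frac{d}{dt}\norm{f-g}_{0,1}\le \frac{\control(t)}{2}\iint K(x,y)\,\lvert f-g\rvert(x)\,(f+g)(y)\,(1+2y)\,dx\,dy+(N_\nu-1)\int\alpha\lvert f-g\rvert$. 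This closes only if $\int_0^T\int_0^\infty(1+y)^{1+\mu}(f+g)\,dy\,dt<\infty$. That property is not part of Definition~\ref{def: solution}. It must be derived for arbitrary solutions (again from the fragmentation dissipation) or imported from the hypotheses of \cite{giri2012uniqueness}.
\end{itemize}
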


\section{Optimal control and stability}

In this section, we study the optimal control problem \eqref{eqn: OCP}, establish the existence of optimal controls, and
analyse the stability of minimisers under perturbations of the coagulation
and fragmentation kernels. The key ingredients are a weak-to-weak
continuity result for the control-to-state map and a $\Gamma$-convergence argument.

We recall the cost functional $J\colon  \U \to \R$,
\begin{equation}\label{eq:J-again}
  J(\control) \coloneqq
  \frac{w}{2}\int_0^T\bigl(\control(t)-1\bigr)^2\,dt
  \;+\; \psi\bigl(\f_\control(T)\bigr),
\end{equation}
where $\psi:X_{0,1}\to\R$ and $\f_\control$ is the solution to \eqref{eqn: cf_equation}.

We present the two main results of this section below.

\vspace{0.25cm}

\begin{mdframed}[hidealllines=false,backgroundcolor=blue!5]
\begin{theorem}[Existence of optimal controls]\label{thrm: existence of optimal controls}
Assume that the coefficients $(K,\alpha,b)$ satisfy
Assumptions~\textnormal{H} and that $\psi$ is $X_0$-weakly lower semi-continuous. Then the optimal control problem
\begin{equation*}
    \inf_{\control\in\U} J(\control)
\end{equation*}
admits at least one solution $\control^\star\in\U$.
\end{theorem}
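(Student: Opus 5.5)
We argue by the direct method. The first step is to check that $J$ is bounded from below on $\U$. The running cost is nonnegative. For the terminal cost, the solutions $\f_\control(T)$, $\control\in\U$, all lie in a fixed bounded set of $X_{0,1}$ by \eqref{eqn: M_0 estimate}. Since $\psi$ is $X_0$-weakly lower semi-continuous, it is bounded below on any $X_0$-weakly relatively compact set. We therefore need $\{\f_\control(T)\}_{\control\in\U}$ to be weakly relatively compact in $X_0$, which is a byproduct of the compactness argument below. If this is awkward, we can instead use the minimising sequence directly: any subsequence converges weakly, and lower semicontinuity then excludes $J\to-\infty$. We then set $m\coloneqq\inf_\U J<+\infty$ (note $J(1)<\infty$) and take a minimising sequence $(\control_n)\subset\U$.

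The second step is compactness in the control variable. $\U$ is bounded, closed and convex in $L^2(0,T)$, so it is weakly closed. Up to a subsequence, $\control_n\rightharpoonup\control^\star$ weakly in $L^2(0,T)$ with $\control^\star\in\U$. The running cost $\control\mapsto\frac w2\|\control-1\|_{L^2}^2$ is convex and continuous, hence weakly lower semi-continuous.

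The third step, and the main obstacle, is the weak-to-weak continuity of the control-to-state map. We must show that $\f_{\control_n}(T)\rightharpoonup\f_{\control^\star}(T)$ weakly in $X_0$; this is the content of Theorem~\ref{thm: weak dependence on controls}, which we invoke. To indicate how it is proved, we would proceed in four stages.
\begin{enumerate}[label=(\alph*)]
\item Use the uniform bounds \eqref{eqn: conservation of mass}--\eqref{eqn: M_0 estimate} to control mass and tails.
\item Obtain uniform integrability of $\{\f_{\control_n}(t)\}$ via a de la Vallée-Poussin type estimate or, on $(0,R)$, through condition \eqref{eqn: holder inequality}.
\item Obtain equicontinuity in time against test functions, and conclude by Dunford--Pettis and Arzelà--Ascoli that a subsequence converges in $C([0,T],X_{0,\mathrm w})$ to some $\f$.
\item Pass to the limit in the weak formulation \eqref{eqn: solution equation}.
\end{enumerate}
The delicate term in the last stage is $\int_0^t\control_n(\tau)\C\f_n(\tau,x)\,d\tau$, since it is a product of weakly converging factors. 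We would first truncate $K$ and the size domain using the growth bound \ref{hyp:bound} with $\mu<\lambda<1$ together with the moment bounds. For the truncated bilinear term we would upgrade to strong (pointwise) convergence of the products $\C\f_n$, using the Egorov-type product lemma (Proposition~\ref{propo: egorov}); this strong convergence then pairs with the weak convergence of $\control_n$. The result is that $\f$ solves the equation with control $\control^\star$. Uniqueness from Theorem~\ref{theorem: well posedness} identifies $\f=\f_{\control^\star}$ and yields convergence of the whole sequence.

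Finally we conclude. Weak lower semicontinuity of $\psi$ in $X_0$ and of the running cost give
\[
J(\control^\star)\le\liminf_{n\to\infty}\Bigl(\tfrac w2\|\control_n-1\|^2_{L^2}\Bigr)+\liminf_{n\to\infty}\psi\bigl(\f_{\control_n}(T)\bigr)\le\liminf_{n\to\infty}J(\control_n)=m .
\]
Hence $\control^\star$ is a minimiser.
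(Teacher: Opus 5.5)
Your proposal is correct and follows the paper's proof. You use weak sequential compactness of $\U$ in $L^2(0,T)$, the weak-to-weak continuity $\f_{\control_n}(T)\rightharpoonup\f_{\control^\star}(T)$ in $X_0$ from Theorem~\ref{thm: weak dependence on controls}, and weak lower semicontinuity of both cost terms; your separate lower-boundedness step is harmless but unnecessary, since the direct method already forces $\inf_{\U} J>-\infty$.

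One small caveat about your optional sketch: the trilinear term is not handled through strong convergence of $\C\f_n$ itself. It is handled through a.e.-in-time convergence and uniform boundedness of the scalar pairings $\tau\mapsto\int_0^R\varphi(x)\,\C\f_n(\tau,x)\,dx$, which Proposition~\ref{propo: egorov} then pairs with $\control_n\rightharpoonup\control^\star$. Since you invoke the theorem rather than reprove it, this does not affect your proof.
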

\end{mdframed}

\vspace{0.25cm}

To discuss stability, we consider a sequence of coefficients
$(K_n,\alpha_n,b_n)$ converging to a limit $(K^\star,\alpha^\star,b^\star)$ (as specified in Assumptions C1--C4 in Section~\ref{subsec: continuous dependence}).
Let $J_n$ and $J^\star$ denote the cost functionals associated with the perturbed coefficients and the limit coefficients, respectively, defined analogously to \eqref{eq:J-again}.

\vspace{0.25cm}

\begin{mdframed}[hidealllines=false,backgroundcolor=blue!5]
\begin{theorem}[$\Gamma$-convergence and stability]\label{thrm: gamma convergence}
Under the convergence assumptions of Section~\ref{subsec: continuous dependence}, if $\psi$ is $X_0$-weakly continuous, the sequence $J_n$ $\Gamma$-converges to $J^\star$ with respect to the weak topology of $L^2(0,T)$. Consequently, if $(\control_n)_n$ is a sequence of minimisers for $J_n$, any weak accumulation point $\control^\star$ is a minimiser of $J^\star$.
\end{theorem}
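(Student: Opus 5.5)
The plan is to derive both parts of the statement from the weak-to-weak continuity of the control-to-state map under coefficient perturbations, Theorem~\ref{thm: weak dependence on controls}. Concretely, I will use it in the following form. If $\control_n\rightharpoonup\control^\star$ weakly in $L^2(0,T)$ with $\control_n\in\U$, and $(K_n,\alpha_n,b_n)\to(K^\star,\alpha^\star,b^\star)$ as in Assumptions C1--C4, then the solutions $\f_n$ of \eqref{eqn: cf_equation} with coefficients $(K_n,\alpha_n,b_n)$ and control $\control_n$ converge, uniformly in time in the weak topology of $X_0$, to the solution $\f^\star$ with coefficients $(K^\star,\alpha^\star,b^\star)$ and control $\control^\star$. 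In particular $\f_n(T)\rightharpoonup\f^\star(T)$ weakly in $X_0$.

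One preliminary remark: $\U$ is convex, closed and bounded in $L^2(0,T)$, so it is weakly compact and weakly closed. Hence every weak limit of controls in $\U$ lies in $\U$. If needed, extend $J_n$ and $J^\star$ by $+\infty$ outside $\U$.

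\textbf{$\Gamma$-liminf inequality.} Let $\control_n\rightharpoonup\control$ weakly in $L^2(0,T)$. If infinitely many $\control_n$ lie outside $\U$, pass to that subsequence; there $J_n=+\infty$ and the inequality is trivial. Otherwise $\control_n\in\U$ eventually, so $\control\in\U$.
\begin{itemize}
\item The running cost $\frac w2\norm{\control-1}_{L^2}^2$ is convex and continuous, hence weakly lower semicontinuous: $\liminf_n \frac w2\norm{\control_n-1}^2\ge\frac w2\norm{\control-1}^2$.
\item By the continuity theorem $\f_n(T)\rightharpoonup\f^\star_{\control}(T)$ in $X_0$, and $\psi$ is $X_0$-weakly continuous, so $\psi(\f_n(T))\to\psi(\f^\star_\control(T))$.
\end{itemize}
Adding the two gives $\liminf_n J_n(\control_n)\ge J^\star(\control)$.

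\textbf{Recovery sequence.} Take the constant sequence $\control_n\equiv\control\in\U$. The running cost is the same for every $n$, and the continuity theorem applied with fixed control and perturbed coefficients gives $\psi(\f_n(T))\to\psi(\f^\star_\control(T))$. Therefore $J_n(\control)\to J^\star(\control)$, which is the $\Gamma$-limsup inequality (with equality).

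\textbf{Convergence of minimisers.} This is the standard $\Gamma$-convergence argument. Let $\control_{n_k}\rightharpoonup\control^\star$ be a subsequence of minimisers, so $\control^\star\in\U$. For any $v\in\U$,
\[
J^\star(\control^\star)\le\liminf_k J_{n_k}(\control_{n_k})\le\limsup_k J_{n_k}(v)=J^\star(v).
\]
The first inequality is the liminf step, the second is minimality of $\control_{n_k}$, and the equality is the recovery step. Since $v$ is arbitrary, $\control^\star$ minimises $J^\star$. Equicoercivity is automatic here: $\U$ is weakly compact, so accumulation points always exist.

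\textbf{Main obstacle.} The only nontrivial point is confirming that Theorem~\ref{thm: weak dependence on controls} really covers simultaneous convergence of controls and coefficients, and that Theorem~\ref{theorem: well posedness} applies to each perturbed triple. The latter needs the perturbed kernels to satisfy Assumptions~H uniformly, which C1--C4 are designed to provide, including a uniform modulus $\rho_R$ in Remark~\ref{remark: assumptions}. If only the final-time weak convergence in $X_0$ is available, that is exactly what is needed, since $\psi$ sees only $\f(T)$.

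Note that $X_0$-weak lower semicontinuity of $\psi$ would suffice for the liminf step. Full weak continuity is needed only for the recovery sequence, where the terminal cost must actually converge along $\f_n(T)$.
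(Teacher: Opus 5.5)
Your proposal is correct and follows the same route as the paper: the liminf inequality combines weak lower semicontinuity of the running cost, Theorem~\ref{thm: weak dependence on controls}, and $X_0$-weak continuity of $\psi$; the constant recovery sequence gives the limsup; and convergence of minimisers uses weak compactness of $\U$ for equi-coercivity. One small fix in your optional extension by $+\infty$: first pass to a subsequence realising $\liminf_n J_n(\control_n)$ before splitting into cases, since $J_n(\control_n)=+\infty$ along \emph{some} subsequence does not by itself bound the liminf of the whole sequence.
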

\end{mdframed}

\vspace{0.25cm}

The proofs of these results are organised as follows:

\begin{enumerate}
  \item \textbf{Section \ref{subsec: continuous dependence}: Dependence on the controls.}
  First, we establish the weak-to-weak continuity of the control-to-state map, allowing for varying kernels. The central difficulty is identifying the limit in the trilinear term (control multiplying the coagulation bilinear form) where all factors converge only weakly. We overcome this by repeated applications of Proposition~\ref{propo: egorov} and a tail estimate based on mass conservation.

  \vspace{0.25cm}

  \item \textbf{Section \ref{subsec: existence of optimal controls and stability}: Existence and Stability.}
  We then apply the direct method of the calculus of variations to prove existence of optimal controls. Finally, we prove Theorem~\ref{thrm: gamma convergence} ($\Gamma$-convergence) by exploiting the weak compactness of $\mathcal{U}$ and the continuity results of the previous step.
\end{enumerate}

  \vspace{0.25cm}

\subsection{Continuous dependence} \label{subsec: continuous dependence}

In this part we prove a weak-to-weak continuity result for the dependence of solutions on the controls. We also allow the kernels to vary. More precisely, for every $n\in\N$ we consider a control $\control_n\in\U$, a coagulation kernel $K_n$, and a fragmentation rate and daughter distribution
\begin{equation}\label{eqn: explicit coefficients formula}
    \alpha_n(x)=\alpha_0^n x^{\lambda_n}, 
    \qquad 
    b_n(x,y)=\frac{2 + \nu_n}{y}\biggl(\frac{x}{y}\biggr)^{\nu_n}\mathds{1}_{(0,y)}(x),
\end{equation}
which satisfy \cref{ass: assumptions}, so that there exists a unique corresponding solution $\f_n$. We then study the convergence of the sequence of solutions $(\f_n)_{n\in\N}$.

\newpage

\begin{mdframed}[hidealllines=false,backgroundcolor=blue!5]
\begin{theorem}\label{thm: weak dependence on controls}
Assume that 
\begin{itemize}
    \item[\textbf{(C1)}] $\control_n \rightharpoonup \control^\star$ in $L^2(0,T)$;
    \item[\textbf{(C2)}] $K_n \to K^\star$ uniformly on compact subsets of $(0,\infty)^2$, and there exist $\hat{\mu}>0$ and $\hat{K}_0>0$ such that
    \begin{equation*}
        K_n(x,y),\, K^\star(x,y)\leq \hat{K}_0 (1+x)^{\hat{\mu}}(1+y)^{\hat{\mu}}
        \quad\text{for every }(x,y)\in (0,\infty)^2\text{ and }n\in\N;
    \end{equation*}
    \item[\textbf{(C3)}] $\alpha_0^n\to \alpha_0^\star> 0$ and $\lambda_n\to\lambda^\star\in(0,1)$ with $\lambda^\star>\hat{\mu}$;
    \item[\textbf{(C4)}] $\nu_n\to\nu^\star\in(-1,0]$.
\end{itemize}
Define
\begin{equation*}
    \alpha^\star(x)\coloneqq \alpha_0^\star x^{\lambda^\star},
    \qquad
    b^\star(x,y)\coloneqq \frac{2+\nu^\star}{y}\biggl(\frac{x}{y}\biggr)^{\nu^\star},
\end{equation*}
and let $\f^\star$ denote the unique solution of the controlled coagulation-fragmentation equation associated with the limit data $\control^\star$, $K^\star$, $\alpha^\star$ and $b^\star$, with the same initial condition $f_{\mathrm{in}}$.

Then $\f_n \to \f^\star$ in $C([0,T],X_{0,\mathrm{w}})$,
i.e.\ $\f_n(t)\to \f^\star(t)$ uniformly in time in the weak topology of $X_0$ (see Section \ref{subsec:functional-setting}).
\end{theorem}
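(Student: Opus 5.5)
The plan is a compactness-plus-identification argument in the spirit of Stewart's existence proof, with uniqueness of the limit problem (Theorem~\ref{theorem: well posedness}) upgrading subsequential convergence to convergence of the whole sequence.

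\textbf{Step 1: uniform a priori bounds.} Since $\control_n\in\U$ is bounded by $u_{\max}$ and the parameters converge by (C2)--(C4), the constant $\mathbf{C}_{0,1}$ in \eqref{eqn: M_0 estimate} can be taken uniform in $n$. Hence $\sup_{n,t}\norm{\f_n(t)}_{0,1}\le C_T$. Mass conservation \eqref{eqn: conservation of mass} then gives the uniform tail bound $\int_R^\infty \f_n(t,x)\,dx\le M_1(f_{\mathrm{in}})/R$.

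\textbf{Step 2: uniform integrability.} Next I show that $\{\f_n(t)\}_{n,t}$ is uniformly integrable on each $(0,R)$, via a de la Vallée-Poussin or modulus argument. For a Borel set $E\subset(0,R)$, integrate \eqref{eqn: solution equation} over $E$, drop the nonpositive loss terms, and bound the two gain terms:
\begin{itemize}
\item The coagulation gain is bounded by $u_{\max}\hat K_0(1+R)^{2\hat\mu}\int_0^t\!\!\int\!\!\int \mathds{1}_E(x+y)\f_n(y)\f_n(x)\,dx\,dy$. Writing $\omega_n(\delta,t)=\sup_{|E|\le\delta}\int_E\f_n(t)$, this term is at most $C\int_0^t\omega_n(\delta,\tau)\,d\tau$, since for fixed $y$ the translated set $E-y$ still has measure at most $\delta$.
\item The fragmentation gain is bounded, using \eqref{eqn: holder inequality} with $\rho_R$ uniform in $n$ (from (C3)--(C4)), by $C\rho_R(\delta)\int_0^t\norm{\f_n}_{0,1}$.
\end{itemize}
Gronwall then gives $\omega_n(\delta,t)\le(\omega_{\mathrm{in}}(\delta)+C\rho_R(\delta))e^{Ct}$, uniformly in $n$. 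Combined with the tail bound and Dunford--Pettis, the family $\{\f_n(t)\}$ lies in a weakly compact set of $X_0$.

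\textbf{Step 3: equicontinuity and extraction.} For $\varphi\in L^\infty$ with support in $(0,R)$, the map $t\mapsto\int\f_n(t)\varphi$ is Lipschitz uniformly in $n$: the right-hand side of the equation, tested against $\varphi$, is bounded by $C\norm{\varphi}_\infty$ thanks to the growth bounds and the moment estimates. By density and the tail estimate this extends to all $\varphi\in L^\infty$. An Arzelà--Ascoli argument in the weak topology (which is metrizable on weakly compact sets of the separable space $X_0$) yields a subsequence with $\f_n\to\f$ in $C([0,T],X_{0,\mathrm w})$. By weak lower semicontinuity, $\f(t)\ge0$ and $\norm{\f(t)}_{0,1}\le C_T$.

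\textbf{Step 4: identification of the limit.} I would pass to the limit in the weak form
\begin{equation*}
\int\f_n(t)\varphi=\int f_{\mathrm{in}}\varphi+\int_0^t\control_n(\tau)\langle\C_n\f_n(\tau),\varphi\rangle\,d\tau+\int_0^t\langle\F_n\f_n(\tau),\varphi\rangle\,d\tau
\end{equation*}
for $\varphi$ in $C_c((0,\infty))$, and then show that $\f$ satisfies Definition~\ref{def: solution} with the limit data. Uniqueness then forces $\f=\f^\star$ and gives convergence of the full sequence.

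\emph{Fragmentation term.} This term is linear. The convergence $\alpha_n\to\alpha^\star$ and $b_n\to b^\star$ is locally uniform away from zero. The near-zero behaviour and the large-$y$ region are handled by \eqref{eqn: holder inequality} and by $\lambda^\star<1$ together with the mass bound.

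\emph{Coagulation term: the main obstacle.} Here $\control_n$ converges only weakly in $L^2(0,T)$, while $\f_n\otimes\f_n$ converges only weakly. The strategy is as follows.
\begin{enumerate}
\item Truncate to $x,y\le R$, controlling the tail by the bound $\int_R^\infty(1+y)^{\hat\mu}\f_n\le C R^{\hat\mu-1}$.
\item On the compact set, replace $K_n$ by $K^\star$ using uniform convergence.
\item Prove that $g_n(\tau)\coloneqq\langle\C^{R}\f_n(\tau),\varphi\rangle$ converges to $g(\tau)$ \emph{uniformly in} $\tau$, not merely pointwise. Once this holds, $\int_0^t\control_ng_n\to\int_0^t\control^\star g$, because $\control_n\rightharpoonup\control^\star$ and $g_n\to g$ strongly in $L^2$.
\end{enumerate}
For the uniform convergence, note that the bilinear form $\iint K^\star(x,y)[\varphi(x+y)-\varphi(x)-\varphi(y)]f(x)f(y)$ is jointly weakly sequentially continuous on bounded, uniformly integrable subsets of $L^1(0,R)$. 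This should follow from an Egorov-type product lemma (Proposition~\ref{propo: egorov}) after approximating $K^\star\varphi$ by finite sums of products $a(x)c(y)$. For such products the form factorises into products of linear functionals, and each factor converges uniformly in $t$ by Step 3. If instead only pointwise-in-$\tau$ convergence were available, I would fall back on dominated convergence in $L^2(0,T)$, which again suffices for the pairing with the weakly converging $\control_n$.

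Finally, to recover the pointwise-in-$x$ formulation (S3)--(S5) from the weak formulation, I would use the bounds from Step 1 and condition (S4). This is the routine passage used in Stewart's argument.
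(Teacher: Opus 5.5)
Your proposal is correct and follows the same route as the paper: uniform $X_{0,1}$ bounds, tightness from mass conservation and a Grönwall estimate on the equi-integrability modulus (using the translated-set trick and the uniform fragmentation bound \eqref{eqn: n holder inequality}) to apply Dunford--Pettis, then equicontinuity, the weak Arzel\`a--Ascoli theorem, term-by-term identification of the limit via Proposition~\ref{propo: egorov}, and uniqueness to recover the whole sequence. The only real difference is that the paper does not seek uniform-in-$\tau$ convergence of the coagulation pairing. Instead it proves pointwise-in-$\tau$ convergence with uniform bounds and applies Proposition~\ref{propo: egorov} in time against $\control_n\rightharpoonup\control^\star$, which is exactly your stated fallback.
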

\end{mdframed}

\vspace{0.15cm}

\begin{remark}\label{remark: existence}
The proof of Theorem~\ref{thm: weak dependence on controls} also applies when $\alpha^\star$ is approximated by truncations, for instance by taking
\begin{equation*}
    \alpha_n(x)=\alpha^\star(x)\mathds{1}_{[0,n]}(x),
\end{equation*}
since in this case property~(i) of Lemma~\ref{lemma: consequences C} is
trivially satisfied. In the truncated setting, existence and uniqueness of solutions follow from a standard fixed-point argument; see, for instance, \cite[Theorem 3.1]{stewart}. Moreover, the corresponding truncated initial conditions converge strongly to $f_{\mathrm{in}}$ by the dominated convergence theorem. Therefore, by considering a constant control $\control_n\equiv \control$, a straightforward adaptation of the following argument may also be viewed as an existence proof for the full controlled coagulation-fragmentation equation. 
\end{remark}

The proof proceeds in four steps. We begin by establishing some basic consequences of \textbf{(C1)}--\textbf{(C4)}. As a second step, in Lemma~\ref{lemma: compactness}, we show that for each $t\in[0,T]$ the family $\{\f_n(t):n\in\N\}$ is relatively weakly compact in $X_0$. We then prove in Lemma~\ref{lemma: equi-continuity} that $(\f_n)_{n\in\N}$ is equi-continuous in time. These two properties allow us to apply the generalised Ascoli--Arzel\`a theorem in the weak topology of $X_0$ (Theorem~\ref{thrm: Ascoli Arzela}) and extract a convergent subsequence. Finally, we identify every accumulation point as the solution corresponding to the limit control $\control^\star$ and conclude, by uniqueness, that the whole
sequence converges.

For every $n\in\N$ we denote by $\C^n$ and $\F^n$ the coagulation and fragmentation operators associated with $K_n,\alpha_n,b_n$, and by $\C^\star$ and $\F^\star$ the corresponding limit operators associated with $K^\star,\alpha^\star,b^\star$.

The first result establishes uniform bounds on the coefficients as well as convergence properties for the kernels.

\begin{lemma}\label{lemma: consequences C}
Under the assumptions of Theorem~\ref{thm: weak dependence on controls}, the following properties hold:

\begin{enumerate}[label=(\roman*)]
    \item There exist $\hat\lambda\in(0,1)$ and $\hat\alpha_0>0$ such that
    \begin{equation*}
        \alpha_n(x)\le \hat\alpha_0 (1+ x^{\hat\lambda})
        \qquad \forall x\in(0,\infty),\ \forall n\in\mathbb N.
    \end{equation*}
    \item The expected number of fragments is uniformly bounded, i.e.\ there exists $N>0$ such that
    \begin{equation*}
        \sup_{n\in\mathbb N} N_{\nu_n}\le N.
    \end{equation*}
    \item For every $R>0$, there exist $0\leq \hat q<1$ and a function $\hat\rho_R:(0,\infty)\to\mathbb (0,\infty)$ such that $\hat \rho_R(\varepsilon)\to 0$ as $\varepsilon\to 0$ and 
    \begin{equation}\label{eqn: n holder inequality}
    \int_0^{\min\{y,R\} }\mathds{1}_E(x)\,b_n(x,y)\,\alpha_n(y)\,dx \leq \hat\rho_R(\lvert E\rvert)(1+y^{\hat q}),
    \end{equation}

    \item $\alpha_n\to \alpha^\star$ uniformly on compact subsets of $(0,\infty)$.

    \item For every $r>0$ and every $y\in(0,\infty)$,
    \begin{equation*}
        b_n(\cdot,y)\to b^\star(\cdot,y)\qquad\text{in }L^1(0,r).
    \end{equation*}
\end{enumerate}
\end{lemma}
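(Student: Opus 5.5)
The plan is to prove the five items one at a time, using only the explicit formulas \eqref{eqn: explicit coefficients formula} and the convergence of the three scalar sequences in \textbf{(C3)}--\textbf{(C4)}. Since $\alpha_0^n\to\alpha_0^\star$, $\lambda_n\to\lambda^\star\in(0,1)$ and $\nu_n\to\nu^\star\in(-1,0]$, we may discard finitely many indices and assume $\alpha_0^n\le 2\alpha_0^\star$, $\lambda_n\le\hat\lambda\coloneqq(1+\lambda^\star)/2<1$, and $\nu_n\ge\hat\nu\coloneqq(\nu^\star-1)/2>-1$ for all $n$. The finitely many remaining indices only enlarge the constants, because each of them individually satisfies the assumptions H.

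\textbf{Item (i).} Fix $x$. If $x\le1$, then $x^{\lambda_n}\le 1$. If $x>1$, then $x^{\lambda_n}\le x^{\hat\lambda}$. Hence $\alpha_n(x)\le\hat\alpha_0(1+x^{\hat\lambda})$ with $\hat\alpha_0\coloneqq 2\alpha_0^\star$.

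\textbf{Item (ii).} The map $\nu\mapsto N_\nu=(\nu+2)/(\nu+1)$ is decreasing on $(-1,0]$. It follows that $N_{\nu_n}\le N_{\hat\nu}\eqqcolon N$.

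\textbf{Item (iii).} I repeat the computation of Remark~\ref{remark: assumptions} while keeping track of the constants. For every $n$,
\begin{equation*}
\int_0^{\min\{y,R\}}\mathds 1_E\, b_n\alpha_n\,dx\le \alpha_0^n N_{\nu_n}\, y^{\lambda_n-(\nu_n+1)}\min\{|E|,y\}^{\nu_n+1}.
\end{equation*}
I then split into the cases $y\le|E|$ and $y>|E|$.
\begin{itemize}
\item If $y\le|E|$, the bound is $\alpha_0^nN_{\nu_n}y^{\lambda_n}\le C|E|^{\lambda_n}$.
\item If $y>|E|$, the bound is $\alpha_0^nN_{\nu_n}|E|^{\nu_n+1}y^{\lambda_n-\nu_n-1}$.
\end{itemize}
In the second case, $y^{\lambda_n-\nu_n-1}\le 1+y^{\hat q}$ for every $n$, where $\hat q\coloneqq\max\{0,\hat\lambda-(\hat\nu+1)\}$ if this is below $1$. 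If it is not, I shrink the margins in the choice of $\hat\lambda,\hat\nu$: since $\lambda^\star-(\nu^\star+1)<1$, we can pick $\hat\lambda$ and $\hat\nu$ close enough to $\lambda^\star$ and $\nu^\star$ that $\hat q<1$.

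The $\varepsilon$-powers still depend on $n$. To remove this dependence I restrict to $|E|\le R$ (automatic, since $E\subset(0,R)$) and use $\varepsilon^{a}\le \max\{1,R\}\,\varepsilon^{a_{\min}}$-type comparisons. Concretely, for $\varepsilon\le1$ one has $\varepsilon^{\lambda_n}\le\varepsilon^{\check\lambda}$ with $\check\lambda\coloneqq\inf_n\lambda_n>0$, which holds after discarding finitely many terms since $\lambda^\star>0$. Likewise $\varepsilon^{\nu_n+1}\le\varepsilon^{\check\nu+1}$ once $\nu_n\le\check\nu<0$; in the case $\nu^\star=0$ one simply uses $\varepsilon^{\nu_n+1}\le\varepsilon^{1/2}$. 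For $\varepsilon>1$ a crude bound suffices. This yields $\hat\rho_R(\varepsilon)=C(\varepsilon^{\check\lambda}+\varepsilon^{\check\nu+1})$ for $\varepsilon\le1$, extended by a constant. I expect this step to be the main obstacle, because several exponents must be chosen uniformly in $n$ at once.

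\textbf{Item (iv).} On a compact set $[a,A]\subset(0,\infty)$, write
\begin{equation*}
|\alpha_0^nx^{\lambda_n}-\alpha_0^\star x^{\lambda^\star}|\le|\alpha_0^n-\alpha_0^\star|A^{\hat\lambda}+\alpha_0^\star|x^{\lambda_n}-x^{\lambda^\star}|.
\end{equation*}
By the mean value theorem, $|x^{\lambda_n}-x^{\lambda^\star}|\le|\lambda_n-\lambda^\star|\,|\log x|\max(x^{\lambda_n},x^{\lambda^\star})$. This tends to zero uniformly on $[a,A]$.

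\textbf{Item (v).} Fix $y$ and $r$. The functions $b_n(\cdot,y)$ converge pointwise to $b^\star(\cdot,y)$ on $(0,\min\{r,y\})$, and all of them vanish for $x>y$. They are dominated by $\frac{2}{y}(x/y)^{\hat\nu}$, which is integrable near $0$ because $\hat\nu>-1$. Dominated convergence then gives convergence in $L^1(0,r)$. For this I read $b^\star$ with the indicator $\mathds 1_{(0,y)}$ included, as in Assumption~\ref{hyp:b}.
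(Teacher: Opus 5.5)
Your route is the paper's: its proof only says that (i)--(iii) follow from the convergence of $(\alpha_0^n,\lambda_n,\nu_n)$ by redoing the computation of Remark~\ref{remark: assumptions} with uniform constants, and that (iv)--(v) follow by dominated convergence. Your (i), (ii), (iv) and (v) fill this in correctly. In (iv), write $\max\{1,A\}^{\hat\lambda}$ instead of $A^{\hat\lambda}$, since for $A<1$ one can have $x^{\lambda_n}>A^{\hat\lambda}$. You are also right that $b^\star$ must carry the indicator $\mathds 1_{(0,y)}$; otherwise (v) fails for $r>y$.

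Item (iii), however, contains a false step. In the case $y>|E|$ you assert $y^{\lambda_n-(\nu_n+1)}\le 1+y^{\hat q}$. When $\lambda_n<\nu_n+1$ the exponent is negative. For $y$ just above a small $|E|$, the left side is then close to $|E|^{\lambda_n-(\nu_n+1)}$, which blows up, while the right side is at most $2$. This is the generic situation, not an edge case: for binary fragmentation $\nu_n=0$ it occurs for every $\lambda_n<1$. For instance, with $\nu_n=0$, $\lambda_n=\tfrac12$, $E=(0,\varepsilon)$ and $y=2\varepsilon$, the integral equals $\sqrt2\,\alpha_0^n\varepsilon^{1/2}$. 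This is not bounded by your case-two estimate $C\varepsilon(1+y^{\hat q})\le 2C\varepsilon$ as $\varepsilon\to0$.

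The repair is a further sign distinction, and it is exactly why Remark~\ref{remark: assumptions} keeps the term $|E|^\lambda$. If $\lambda_n<\nu_n+1$ and $y>|E|$, then $y^{\lambda_n-(\nu_n+1)}\le|E|^{\lambda_n-(\nu_n+1)}$, so this subcase is also bounded by $\alpha_0^nN_{\nu_n}|E|^{\lambda_n}$. If $\lambda_n\ge\nu_n+1$, the exponent lies in $[0,\hat q]$ and your bound holds. Together this gives $\alpha_0^nN_{\nu_n}\bigl(|E|^{\lambda_n}+|E|^{\nu_n+1}\bigr)(1+y^{\hat q})$ for every $n$.

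The uniformisation of the $\varepsilon$-powers also has an inequality backwards. For $\varepsilon\le1$ the map $a\mapsto\varepsilon^a$ is nonincreasing, so $\varepsilon^{\nu_n+1}\le\varepsilon^{\check\nu+1}$ requires the lower bound $\nu_n\ge\check\nu$, not $\nu_n\le\check\nu<0$. You already have the right bound $\nu_n\ge\hat\nu>-1$. It gives $\varepsilon^{\nu_n+1}\le\varepsilon^{\hat\nu+1}$ directly, with no special treatment of $\nu^\star=0$. Your bound $\varepsilon^{\lambda_n}\le\varepsilon^{\check\lambda}$ with $\check\lambda=\inf_n\lambda_n>0$ is fine.

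The hedge about $\hat q\ge1$ is unnecessary: $\hat\lambda<1$ and $\hat\nu>-1$ force $\hat\lambda-(\hat\nu+1)<1$. With these corrections, take $\hat\rho_R(\varepsilon)=C(\varepsilon^{\check\lambda}+\varepsilon^{\hat\nu+1})$ for $\varepsilon\le1$. For $1<\varepsilon\le R$, extend it by the constant $C(R^{\hat\lambda}+R)$. This works uniformly in $n$, and (iii) follows.
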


\begin{proof}
Properties (i), (ii), (iii) follow from the convergence of the parameters
$(\alpha_0^n,\lambda_n,\nu_n)$, where for (iii) one mimics the computations of Remark \ref{remark: assumptions}. Properties (iv) and (v)
follow directly from assumptions \textbf{(C1)}--\textbf{(C4)} and the dominated convergence theorem.
\end{proof}

The compactness step, based on the Dunford--Pettis theorem, follows the standard argument in coagulation-fragmentation theory used to prove weak compactness for sequences of solutions associated with truncated kernels; see, for instance, \cite{stewart} or \cite[Chapter~8]{banasiak2019analytic}. We sketch the proof for completeness.

\begin{lemma}\label{lemma: compactness}
The set
\begin{equation*}
    \bigl\{ \f_n(t)\colon n\in\N, t\in [0,T]\bigr\}
\end{equation*}
is relatively weakly compact in $X_0$.
\end{lemma}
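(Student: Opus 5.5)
We plan to apply the Dunford--Pettis theorem in $X_0=L^1(0,\infty)$. This requires three properties of the family $\{\f_n(t)\colon n\in\N,\ t\in[0,T]\}$: it is bounded in $X_0$, it is uniformly tight at infinity, and it is uniformly integrable on bounded sets.

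\emph{Boundedness and tightness.} The first two properties come from the moment estimates of Theorem~\ref{theorem: well posedness}. Mass conservation gives $M_1(\f_n(t))=M_1(f_{\mathrm{in}})$ for all $n$ and $t$, and hence
\begin{equation*}
    \int_R^\infty \f_n(t,x)\,dx\le \frac{M_1(f_{\mathrm{in}})}{R},
\end{equation*}
uniformly in $n$ and $t$. For the $M_0$ bound we reproduce the proof of \eqref{eqn: M_0 estimate} and track the constants. Integrating \eqref{eqn: solution equation} in $x$, the coagulation contribution to $M_0$ is nonpositive. The fragmentation contribution is bounded by
\begin{equation*}
    (N_{\nu_n}-1)\int_0^\infty\alpha_n\f_n\,dx\le (N-1)\hat\alpha_0\int_0^\infty (1+x)\,\f_n\,dx,
\end{equation*}
using Lemma~\ref{lemma: consequences C}(i)--(ii). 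Gronwall's lemma then gives a bound on $\|\f_n(t)\|_{0,1}$ that is uniform in $n$ and $t$. If needed we first justify the integration against the weight $1$ by truncating it.

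\emph{Uniform integrability on $(0,R)$.} This is the main step. Fix $R>0$ and a Borel set $E\subset(0,R)$. We integrate \eqref{eqn: solution equation} over $E$ and drop the nonpositive loss terms. The coagulation gain satisfies
\begin{equation*}
    \int_E \C^n_g\f_n\,dx\le \tfrac12\sup_{(0,R)^2}K_n\int_0^\infty \f_n(y)\int_{(E-y)\cap(0,R)}\f_n(z)\,dz\,dy.
\end{equation*}
The supremum is uniformly bounded by (C2), and the right-hand side is at most $C\,\sup_{|F|\le|E|,\,F\subset(0,R)}\int_F\f_n$, since translates of $E$ have the same measure. The fragmentation gain is handled by Lemma~\ref{lemma: consequences C}(iii): it is bounded by $\hat\rho_R(|E|)\int_0^\infty(1+y^{\hat q})\f_n(y)\,dy\le C\hat\rho_R(|E|)$. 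Now set
\begin{equation*}
    \omega_n(t,\delta)\coloneqq\sup\Bigl\{\int_F\f_n(t)\,dx\colon F\subset(0,R),\ |F|\le\delta\Bigr\}.
\end{equation*}
The two gain estimates give
\begin{equation*}
    \omega_n(t,\delta)\le \omega_{\mathrm{in}}(\delta)+C\int_0^t\omega_n(s,\delta)\,ds+C T\hat\rho_R(\delta),
\end{equation*}
where the control factor is bounded by $u_{\max}$. Gronwall's lemma yields $\omega_n(t,\delta)\le(\omega_{\mathrm{in}}(\delta)+CT\hat\rho_R(\delta))e^{CT}$, which tends to $0$ as $\delta\to0$ uniformly in $n$ and $t$. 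Here $\omega_{\mathrm{in}}(\delta)\to0$ because $f_{\mathrm{in}}$ is a single integrable function. One technical point is that $t\mapsto\omega_n(t,\delta)$ must be measurable for Gronwall to apply. We avoid this issue by applying Gronwall to $\sup_{s\le t}\omega_n(s,\delta)$, which is monotone.

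Uniform integrability near $x=0$ is already covered, since $E\subset(0,R)$ includes sets near the origin. Combining the three properties, Dunford--Pettis gives relative weak compactness in $X_0$. The main obstacle is the uniform equi-integrability estimate, specifically controlling the convolution-type gain term through translation invariance and obtaining the uniform fragmentation bound from Lemma~\ref{lemma: consequences C}(iii).
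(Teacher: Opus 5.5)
Your proposal is correct and follows the paper's proof essentially step by step. The common skeleton is: Dunford--Pettis; tightness and a uniform $X_{0,1}$ bound from mass conservation plus Gronwall; and equi-integrability through the modulus $\sup_{|E|\le\delta}\int_E \f_n(t)\,dx$, with translation invariance for the coagulation gain, Lemma~\ref{lemma: consequences C}(iii) for the fragmentation gain, and Gronwall to close.

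The deviations are minor. You bound $M_0$ by integrating over the whole half-line, using the uniform fragment count $N$ and the sublinear bound on $\alpha_n$; the paper integrates only over $(0,1)$, where it shows the coagulation contribution is nonpositive, and uses (iii) there. Restricting the modulus to subsets of $(0,R)$ and applying Gronwall to its monotone supremum in time makes your Gronwall step slightly cleaner than the paper's.
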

\begin{proof}
By the Dunford-Pettis theorem, it is enough to prove that the family $\{f_n(t):n\in\mathbb N,\ t\in[0,T]\}$
is bounded in $X_0=L^1(0,\infty)$, tight, and equi-integrable.

\textbf{Equi-boundedness and tightness:} 
First, simple calculations show that for every $t\in[0,T]$
\begin{equation*}
    \int _0^1 \control_n (t)\C_n \f_n(t,x) dx \leq 0,
\end{equation*}
since controls are positive and a change of order of integration yields
\begin{align*}
    \int_0^1 \C_g^n \f_n(t,x) dx &= \frac{1}{2}\int_0^1 \int_0^{1-x} K(x,y) \f_n(t,x) \f_n(t,y) dy dx\leq  -\frac{1}{2}\int_0^1 \C_l^n \f_n(t,x) dx.
    \end{align*}
Therefore, the growth of the number of particles within $(0,1)$ can be controlled by the fragmentation gain as follows
\begin{align*}
    \int_0^1 \f_n(t,x)dx\leq &\int_0^\infty f_{\mathrm{in}}(x)dx+\int_0^t\int_0^1 \int_x^\infty b_n(x,y) \alpha_n(y) \f_n(\tau, y) dy dx d\tau\\
                        \leq & \int_0^\infty f_{\mathrm{in}}(x)dx + \int_0^t \hat \rho_1(1) \int_0^\infty\f_n(\tau,x) (1+x^{\hat q}) dx d\tau,
\end{align*}
where we exchanged the order of integration and used \eqref{eqn: n holder inequality} with $R=1$ and $E=(0,1)$. As a consequence, due to mass conservation and the inequality $1+x^{\hat q}\leq 2(1+x)$, we have
\begin{align*}
    \int_0^\infty \f_n(t,x) & \leq \int_0^1 \f_n(t,x) dx + \int _1^\infty x\f_n(t,x)dx \leq \norm{f_{\mathrm{in}}}_{0,1}+\int _0^t 2 \hat \rho_1(1) \int_0^\infty \f_n(\tau,x) (1+x) dx d\tau.
\end{align*}
Therefore, Grönwall's inequality and mass conservation yield
\begin{equation}\label{eqn: equiboundedness}
    \norm{\f_n(t)}_{0,1}\leq \mathbf{C}_{eb}\quad \forall n\in\N, \, t\in [0,T]
\end{equation}
for some constant $\mathbf{C}_{eb}>0$ independent of $n$. Tightness in $X_0$ follows from mass conservation since for $R>1$
\begin{equation*}
    \int_R^\infty \f_n(t,x) dx \leq \frac{1}{R}\int_R^\infty x \f_n(t,x) dx \leq \frac{1}{R} \norm{f_{\mathrm{in}}}_1.
\end{equation*}

\textbf{Equi-integrability.}
For $n\in\N$, $t\in[0,T]$, and $\varepsilon>0$, define
\begin{equation*}
    \xi_n(t,\varepsilon):=\sup\left\{\int_E \f_n(t,x)\,dx:\ |E|\le \varepsilon\right\}.
\end{equation*}
We prove that
\begin{equation*}
    \sup_{n\in\N,\; t\in[0,T]}\xi_n(t,\varepsilon)\xrightarrow[\varepsilon\to0]{}0.
\end{equation*}
By tightness it is enough to restrict to Borel sets $E\subset (0,R)$ for $R>0$. Let $E\subset(0,R)$ with $|E|\le \varepsilon$ be fixed. Multiplying \eqref{eqn: solution equation} by $\mathds{1}_E$, integrating over $(0,R)$, and using Fubini's theorem together with the nonnegativity of $f_n$, we obtain
\begin{equation*}
    \int_E \f_n(t,x)\,dx
\leq
\int_E f_{\mathrm{in}}(x)\,dx
+\int_0^t\left(\control_n(\tau)\int_E \mathcal C_g^n \f_n(\tau,x)\,dx
+\int_E \mathcal F_g^n \f_n(\tau,x)\,dx
\right)d\tau .
\end{equation*}
For the coagulation term, a change in the order of integration together with \textbf{(C2)}, gives
\begin{align*}
    \int_E \C_g^n\f_n(\tau, x)dx=\frac{1}{2}\int_0^R \int_0^{R-y} \mathds{1}_{E-y}(x) K_n(y,x) \f_n(\tau,y)\f_n(\tau,x) dx dy
    \leq \frac{1}{2}\hat K_0 (1+R)^{\hat \mu} \mathbf{C}_{eb} \xi_n(\tau,\varepsilon)
\end{align*}
since, for every $y\in(0,R)$, the translated set $(E-y)\cap(0,R-y)$ is contained in $(0,R)$ and has measure not exceeding $\varepsilon$. For the fragmentation term, \eqref{eqn: n holder inequality} yields
\begin{equation*}
    \int_E \mathcal F_g^n \f_n(\tau,x)\,dx
\leq \int_0^\infty \int_0^{\min\{y,R\}} \mathds{1}_E(x) b_n(x,y) \alpha_n(y) dx \f_n(\tau,y) dy \leq 
2 \hat \rho_R(\varepsilon)\mathbf C_{eb}.
\end{equation*}
Therefore,
\begin{equation*}
    \xi_n(t,\varepsilon)
\leq
\eta(\{f_{\mathrm{in}}\},\varepsilon)
+2T\hat\rho_R(\varepsilon)\mathbf C_{eb}
+\frac12 \hat K_0 (1+R)^{\hat \mu} u_{\max}\mathbf C_{eb}
\int_0^t \xi_n(\tau,\varepsilon)\,d\tau,
\end{equation*}
where $\eta(\{f_{\mathrm{in}}\},\varepsilon)$ is the modulus of equi-integrability of $\{f_{\mathrm{in}}\}$. An application of Grönwall's lemma gives
\begin{equation*}
    \xi_n(t,\varepsilon)
\leq
\Bigl(\eta(\{f_{\mathrm{in}}\},\varepsilon)+2T\hat\rho_R(\varepsilon)\mathbf C_{eb}\Bigr)
\exp\!\left(\frac12 \hat K_0 (1+R)^{\hat \mu} u_{\max}\mathbf C_{eb} T\right).
\end{equation*}
Since $f_{\mathrm{in}}\in X_0=L^1(0,\infty)$, its modulus of equi-integrability satisfies $\eta(\{f_{\mathrm{in}}\},\varepsilon)\to 0$ as $\varepsilon\to 0$. Moreover, Lemma~\ref{lemma: consequences C} yields $\hat{\rho}_R(\varepsilon)\to 0$ as $\varepsilon\to 0$. Therefore, the right-hand side converges to $0$ as $\varepsilon\to 0$, as desired.
\end{proof}

Now we show equi-continuity.

\begin{lemma}\label{lemma: equi-continuity}
The sequence $(\f_n)_{n\in\N}$ is strongly equi-continuous in time with values in $X_0$. In particular, there exists a constant $\mathbf C_{ec}>0$, independent of $n$, such that
\begin{equation*}
    \norm{\f_n(t)-\f_n(s)}_0
    \leq \mathbf C_{ec}\,|t-s|
    \quad\forall\, 0\leq s\leq t\leq T,\; n\in\N.
\end{equation*}
\end{lemma}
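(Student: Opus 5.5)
We use the integral formulation \eqref{eqn: solution equation} directly. For $0\le s\le t\le T$ and a.e. $x$,
\begin{equation*}
    \f_n(t,x)-\f_n(s,x)=\int_s^t\Bigl[\control_n(\tau)\C^n\f_n(\tau,x)+\F^n\f_n(\tau,x)\Bigr]d\tau .
\end{equation*}
We integrate the absolute value over $x\in(0,\infty)$ and use Fubini–Tonelli. Condition \textbf{(S4)} makes each gain and loss piece separately integrable, so the exchange is legitimate. This gives
\begin{equation*}
    \norm{\f_n(t)-\f_n(s)}_0\le\int_s^t\Bigl(u_{\max}\norm{\C^n_g\f_n(\tau)}_0+u_{\max}\norm{\C^n_l\f_n(\tau)}_0+\norm{\F^n_g\f_n(\tau)}_0+\norm{\F^n_l\f_n(\tau)}_0\Bigr)d\tau .
\end{equation*}
It then suffices to bound each of the four $X_0$-norms by a constant independent of $n$ and $\tau$. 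The constant $\mathbf C_{ec}$ is the sum of these bounds.

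\textbf{Coagulation terms.} A change of variables $(y,x-y)\mapsto(y,z)$ gives $\norm{\C_g^n f}_0=\frac12\int\!\!\int K_n(y,z)f(y)f(z)\,dy\,dz=\frac12\norm{\C_l^n f}_0$ for $f\ge0$. By the uniform growth bound in \textbf{(C2)},
\begin{equation*}
    \norm{\C_l^n\f_n(\tau)}_0\le\hat K_0\Bigl(\int_0^\infty(1+x)^{\hat\mu}\f_n(\tau,x)\,dx\Bigr)^2 .
\end{equation*}
Since $(1+x)^{\hat\mu}\le 1+x$, this is at most $\hat K_0\norm{\f_n(\tau)}_{0,1}^2\le\hat K_0\mathbf C_{eb}^2$ by \eqref{eqn: equiboundedness}.

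\textbf{Fragmentation terms.} By Lemma~\ref{lemma: consequences C}(i), $\norm{\F^n_l\f_n(\tau)}_0\le\hat\alpha_0\int(1+x^{\hat\lambda})\f_n\le 2\hat\alpha_0\mathbf C_{eb}$, using $x^{\hat\lambda}\le1+x$. For the gain term we exchange the order of integration and use \eqref{eqn:finite number of fragments} together with Lemma~\ref{lemma: consequences C}(ii):
\begin{equation*}
    \norm{\F^n_g\f_n(\tau)}_0=\int_0^\infty N_{\nu_n}\alpha_n(y)\f_n(\tau,y)\,dy\le 2N\hat\alpha_0\mathbf C_{eb}.
\end{equation*}

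Collecting the four bounds yields the Lipschitz estimate with $\mathbf C_{ec}=(\tfrac32u_{\max}\hat K_0\mathbf C_{eb}+2\hat\alpha_0(1+N))\mathbf C_{eb}$. The step needing the most care is justifying the passage from the pointwise identity to the $L^1$ estimate. This amounts to checking that the gain and loss terms are jointly integrable in $(\tau,x)$ so that Tonelli applies termwise; the uniform moment bound from Lemma~\ref{lemma: compactness} gives exactly this. The key structural point is $\hat\lambda<1$ and $\hat\mu<1$: these growth rates are dominated by the weight $1+x$, and mass conservation controls that weight uniformly in $n$.
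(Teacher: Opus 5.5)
Your proposal is correct and follows essentially the same route as the paper. You split the integrand into the four gain/loss pieces, bound the coagulation terms via \textbf{(C2)} and the fragmentation terms via Lemma~\ref{lemma: consequences C}(i)--(ii), and close with the uniform bound \eqref{eqn: equiboundedness}, arriving at the same constant $\mathbf C_{ec}$. The exchange of integrals needs no separate integrability check: after bounding $\lvert \f_n(t,x)-\f_n(s,x)\rvert$ pointwise by the integral of the four nonnegative pieces, Tonelli applies directly.
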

\begin{proof}
Let $0\leq s<t\leq T$ be fixed. For every $n\in\N$, by the definition of solution and recalling positivity of solutions, controls, and kernels, we obtain
\begin{align*}
    \norm{\f_n(t)-\f_n(s)}_0
    &\leq \int_s^t \int_0^\infty \biggl[
        \control_n(\tau)\,\C_g^n\f_n(\tau,x)
        + \control_n(\tau)\,\bigl\lvert \C_l^n\f_n(\tau,x)\bigr\rvert 
        + \F_g^n\f_n(\tau,x)
        + \bigl\lvert \F_l^n\f_n(\tau,x)\bigr\rvert
    \biggr]\,dx\,d\tau .
\end{align*}

We estimate the four contributions separately. For the coagulation gain term, by straightforward computations, \textbf{(C2)} and Fubini--Tonelli theorem, we have
\begin{align*}
    \int_s^t \int_0^\infty \control_n(\tau)\,\C_g^n\f_n(\tau,x)\,dx\,d\tau
    &\leq \frac{1}{2}u_{\max}\hat K_0 \int_s^t \norm{\f_n(\tau)}_{0,1}^2\,d\tau \leq (t-s)\,\frac{1}{2}u_{\max}\hat K_0\,\mathbf{C}_{eb}^2 .
\end{align*}
Similarly, for the coagulation loss term,
\begin{align*}
    \int_s^t \int_0^\infty \control_n(\tau)\,\bigl\lvert \C_l^n\f_n(\tau,x)\bigr\rvert\,dx\,d\tau
    &\leq u_{\max}\hat K_0 \int_s^t \norm{\f_n(\tau)}_{0,1}^2\,d\tau \leq(t-s)\,u_{\max}\hat K_0\,\mathbf{C}_{eb}^2 .
\end{align*}

The fragmentation loss contribution is handled by the uniform sublinear bound on the fragmentation rates, namely $\alpha_n(x)\leq \hat\alpha_0(1+x^{\hat \lambda})\leq 2\hat\alpha_0(1+x)$, as follows:
\begin{align*}
    \int_s^t \int_0^\infty \bigl\lvert \F_l^n\f_n(\tau,x)\bigr\rvert\,dx\,d\tau\leq 2\hat\alpha_0 \int_s^t \int_0^\infty (1+x)\f_n(\tau,x)\,dx\,d\tau \leq (t-s)\,2\,\hat\alpha_0\,\mathbf{C}_{eb}.
\end{align*}

On the other hand, for the fragmentation gain term, recalling Lemma \ref{lemma: consequences C} (i) and (ii), we have
\begin{align*}
    \int_s^t \int_0^\infty \F_g^n\f_n(\tau,x)\,dx\,d\tau
    &= \int_s^t \int_0^\infty \alpha_n(y)\biggl(\int_0^y b_n(x,y)\,dx\biggr)\f_n(\tau,y)\,dy\,d\tau \\
    &\leq 2 \hat\alpha_0 \int_s^t \int_0^\infty (1+y)\biggl(\int_0^y b_n(x,y)\,dx\biggr)\f_n(\tau,y)\,dy\,d\tau \leq (t-s)\, 2\,\hat\alpha_0 N\,\mathbf{C}_{eb},
\end{align*}
where $N$ is the uniform bound on the number of fragments from Lemma \ref{lemma: consequences C}.

Collecting the previous estimates, we infer that there exists a constant $\mathbf C_{ec}>0$, independent of $n$, such that
\begin{equation*}
    \norm{\f_n(t)-\f_n(s)}_0 \leq \mathbf C_{ec}\,|t-s|,
\end{equation*}
which proves the claim.
\end{proof}

To prove Theorem~\ref{thm: weak dependence on controls} we shall rely on the following consequence
of Egorov's theorem (see \cite[Chapter~7]{banasiak2019analytic}).

\begin{proposition}\label{propo: egorov}
Let $\Omega\subset \R^d$ be Borel, $\mu$ a finite measure, $(f_n)_{n\in\N}\subset L^1(\Omega,\mu)$
and $(g_n)_{n\in\N}\subset L^\infty(\Omega,\mu)$. Assume that
\begin{itemize}
  \item $f_n \rightharpoonup f$ weakly in $L^1(\Omega,\mu)$;
  \item $g_n \to g$ $\mu$-a.e., with $g\in L^\infty(\Omega,\mu)$ and $\sup_n\|g_n\|_{L^\infty}<\infty$.
\end{itemize}
Then $f_n g_n \rightharpoonup f g$ weakly in $L^1(\Omega,\mu)$.
\end{proposition}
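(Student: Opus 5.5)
The plan is to test against an arbitrary $\varphi\in L^\infty(\Omega,\mu)$ and split
\begin{equation*}
\int_\Omega (f_ng_n - fg)\varphi\,d\mu
= \int_\Omega f_n (g_n-g)\varphi\,d\mu + \int_\Omega (f_n-f)g\varphi\,d\mu .
\end{equation*}
Since $g\varphi\in L^\infty(\Omega,\mu)$, the second term tends to zero directly by the weak convergence $f_n\rightharpoonup f$ in $L^1(\Omega,\mu)$. All the work is therefore in the first term, where $f_n$ converges only weakly and $g_n-g$ converges only pointwise.

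For the first term I will rely on two ingredients. The first is that the weakly convergent sequence $(f_n)$ is bounded in $L^1(\Omega,\mu)$ and uniformly integrable, by the Dunford--Pettis theorem. So there is $C:=\sup_n\|f_n\|_{L^1}<\infty$, and for every $\varepsilon>0$ there is $\delta>0$ with $\sup_n\int_A|f_n|\,d\mu<\varepsilon$ whenever $\mu(A)<\delta$. The second is that $\mu$ is finite and $g_n\to g$ $\mu$-a.e., so Egorov's theorem gives a measurable set $A_\delta$ with $\mu(A_\delta)<\delta$ on whose complement $g_n\to g$ uniformly.

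With $M:=\sup_n\|g_n\|_{L^\infty}+\|g\|_{L^\infty}$, I then estimate
\begin{equation*}
\Bigl|\int_\Omega f_n(g_n-g)\varphi\,d\mu\Bigr|
\le \|\varphi\|_{L^\infty}\Bigl( C\sup_{\Omega\setminus A_\delta}|g_n-g| + M\sup_k\int_{A_\delta}|f_k|\,d\mu\Bigr).
\end{equation*}
The right-hand side is bounded by $\|\varphi\|_{L^\infty}\bigl(C\,o(1)+M\varepsilon\bigr)$. Taking $\limsup_{n\to\infty}$ and then letting $\varepsilon\to0$ shows that the first term vanishes, which completes the proof.

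The main obstacle is the uniform integrability step. It is the one place where weak convergence must be upgraded to a quantitative statement, and it is what makes the exceptional Egorov set negligible uniformly in $n$. The finiteness of $\mu$ enters only through Egorov's theorem.
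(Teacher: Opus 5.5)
Your proof is correct. You split off $\int_\Omega (f_n-f)g\varphi\,d\mu$, then control $\int_\Omega f_n(g_n-g)\varphi\,d\mu$ by Egorov's theorem off a small set together with the equi-integrability of the weakly convergent sequence $(f_n)$ from Dunford--Pettis; the paper gives no proof of its own and presents the statement as a consequence of Egorov's theorem, citing Chapter~7 of the Banasiak--Lamb--Lauren\c{c}ot monograph, which is this standard argument.
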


For completeness we also recall a weak-topology version of Ascoli--Arzel\`a
\cite[Theorem~7.1.16]{banasiak2019analytic} specialised to $X_0$. 

\begin{theorem}\label{thrm: Ascoli Arzela}
Let $\mathcal E$ be a set of
weakly continuous maps $[0,T]\to X_0$. Then $\mathcal E$ is relatively weakly sequentially compact in
$C([0,T],X_{0,\mathrm{w}})$ if and only if
\begin{itemize}
  \item (\emph{weak equi-continuity}) for every $t_0\in[0,T]$ and every $\phi\in L^\infty(0,\infty)$,
  \begin{equation*}
          \lim_{t\to t_0}\ \sup_{f\in\mathcal E}\lvert\langle\phi,f(t)-f(t_0)\rangle\rvert=0;
  \end{equation*}
  \item there exists a dense set $D\subset[0,T]$ such that, for every $t\in D$,
  the set $\{f(t): f\in\mathcal E\}$ is relatively weakly sequentially compact in $X_0$.
\end{itemize}
\end{theorem}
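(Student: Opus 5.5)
The plan is to prove the two implications separately. Throughout, convergence in $C([0,T],X_{0,\mathrm w})$ means uniform-in-time convergence of $t\mapsto\langle\phi,f(t)\rangle$ for each fixed $\phi\in L^\infty(0,\infty)$, as in Section~\ref{subsec:functional-setting}. Two standard facts about $X_0=L^1(0,\infty)$ will be used: the Eberlein--Šmulian theorem, so that relative weak compactness and relative weak sequential compactness coincide, and the weak sequential completeness of $L^1$.

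\emph{Necessity.} Pointwise relative compactness is immediate. Evaluation $f\mapsto f(t)$ is sequentially continuous from $C([0,T],X_{0,\mathrm w})$ into $X_0$ with its weak topology, so every $t$ works and we may take $D=[0,T]$. For weak equicontinuity I argue by contradiction. Suppose it fails at some $t_0$ and $\phi$. Then there are $\varepsilon>0$, $f_k\in\mathcal E$ and $t_k\to t_0$ with $|\langle\phi,f_k(t_k)-f_k(t_0)\rangle|\ge\varepsilon$. Extract a subsequence with $f_k\to g$ uniformly in time against $\phi$, where $g\in C([0,T],X_{0,\mathrm w})$. Then
\[
|\langle\phi,f_k(t_k)-f_k(t_0)\rangle|\le 2\sup_{t}|\langle\phi,f_k(t)-g(t)\rangle|+|\langle\phi,g(t_k)-g(t_0)\rangle|\to0,
\]
which is a contradiction.

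\emph{Sufficiency.} First I would upgrade the pointwise equicontinuity to uniform equicontinuity on $[0,T]$ for each fixed $\phi$. Fix $\delta>0$. For every $t_0$ choose a neighbourhood on which the oscillation of $\langle\phi,f(\cdot)\rangle$ relative to $t_0$ is at most $\delta$, uniformly in $f\in\mathcal E$. Cover $[0,T]$ by finitely many such neighbourhoods and take a Lebesgue number; this gives $r>0$ with $|t-s|<r\Rightarrow\sup_{f\in\mathcal E}|\langle\phi,f(t)-f(s)\rangle|\le2\delta$.

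Next, given a sequence $(f_n)\subset\mathcal E$, pick a countable dense set $D'=\{s_j\}\subset D$. Using Eberlein--Šmulian at each $s_j$ and a diagonal extraction, I obtain a subsequence, still denoted $f_n$, such that $f_n(s_j)$ converges weakly in $X_0$ for every $j$. For fixed $\phi$ and $\delta$, cover $[0,T]$ by finitely many $r$-balls centred at points of $D'$. For any $t$, with $s$ a centre within distance $r$,
\[
|\langle\phi,f_n(t)-f_m(t)\rangle|\le 4\delta+|\langle\phi,f_n(s)-f_m(s)\rangle|.
\]
Only finitely many $s$ occur, so the sequence $\langle\phi,f_n(\cdot)\rangle$ is uniformly Cauchy on $[0,T]$.

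In particular $(f_n(t))$ is weakly Cauchy in $X_0$ for every $t$. By weak sequential completeness of $L^1$ it has a weak limit $g(t)\in X_0$, and $\langle\phi,f_n\rangle\to\langle\phi,g\rangle$ uniformly in $t$. Then $t\mapsto\langle\phi,g(t)\rangle$ is a uniform limit of continuous functions, hence continuous, so $g\in C([0,T],X_{0,\mathrm w})$ and $f_n\to g$ in this space.

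The main obstacle is producing a genuine limit $g(t)\in X_0$ at points $t\notin D$, where no compactness is assumed. I would resolve it exactly as above, with weak sequential completeness of $L^1$ doing the work. If one prefers to avoid that fact, an alternative is to take $s_k\in D'$ with $s_k\to t$ and show $(g(s_k))$ is weakly Cauchy, using uniform equicontinuity. The remaining concern is that the weak topology on bounded subsets of $X_0$ is not metrizable. This is why the whole sufficiency argument is phrased sequentially and one test function $\phi$ at a time, relying only on Eberlein--Šmulian and the finite-cover step.
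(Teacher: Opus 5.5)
Your proof is correct. The paper gives no argument for this statement: it imports it from \cite[Theorem~7.1.16]{banasiak2019analytic} ``specialised to $X_0$''. What you provide is therefore a self-contained replacement for a citation, not a variant of a written proof.

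All the steps check out:
\begin{itemize}
\item Necessity follows from sequential continuity of the evaluations, together with your contradiction argument for equicontinuity.
\item For sufficiency, the Lebesgue-number argument upgrades equicontinuity to uniform equicontinuity for each fixed $\phi$.
\item The diagonal extraction on a countable dense $D'\subset D$ is independent of $\phi$, so one subsequence serves all test functions at once.
\item The finite-cover estimate makes $\langle\phi,f_n(\cdot)\rangle$ uniformly Cauchy.
\item Weak sequential completeness of $L^1$ produces $g(t)\in X_0$ at points off $D$.
\end{itemize}

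The advantage of your route is that it shows where the structure of $X_0$ genuinely enters. When compactness is assumed only on a dense set, the statement is false in a general Banach space. In $c_0$, take $g(t)_k=\max\{0,1-k|t-t_0|\}$ and $f_n(t)=(g(t)_1,\dots,g(t)_n,0,0,\dots)$. This family is weakly equicontinuous, and $\{f_n(t)\}_n$ is a finite set for every $t\neq t_0$. Yet $f_n(t_0)=\sum_{k\le n}e_k$ has no weakly convergent subsequence in $c_0$. So weak sequential completeness is doing real work here. In the paper's own application the point does not arise, since Lemma~\ref{lemma: compactness} gives relative weak compactness at every $t$, so one may take $D=[0,T]$.

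Two small remarks. First, Eberlein--\v{S}mulian is never needed: both the hypothesis and the conclusion are already formulated sequentially. Second, your proposed way of ``avoiding'' weak sequential completeness does not avoid it. Showing that $(g(s_k))$ is weakly Cauchy for $s_k\to t$ still leaves you needing a weak limit in $X_0$. That limit is exactly what weak sequential completeness supplies, because the hypotheses give no compactness of $\bigcup_k\{f_n(s_k)\}_n$. The $c_0$ example above fails at precisely this step. Keep the main argument as you wrote it.
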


\medskip

\emph{Proof of Theorem \ref{thm: weak dependence on controls}.}

First, we note that the limit equation is well-posed, as the kernels satisfy \cref{ass: assumptions}.

We know from Lemma \ref{lemma: compactness} that for every $t\in[0,T]$ the set 
\begin{equation*}
    \{ \f_n(t) : n\in\N \}
\end{equation*}
is relatively weakly compact in $X_0$, while Lemma~\ref{lemma: equi-continuity} asserts that the sequence $(\f_n)_{n\in\N}$ is strongly, thus weakly, equi-continuous. Therefore, by Theorem \ref{thrm: Ascoli Arzela} there exists a subsequence (not relabelled) and a function $\f^\star\in C([0,T],X_{0,w})$ such that 
\begin{equation*}
    \f_n(t)\rightharpoonup \f^\star(t)\quad\text{in }X_0\text{ for all }t\in[0,T],
\end{equation*}
and the convergence is uniform in time in the weak topology of $X_0$. 

We claim that the uniform bound \eqref{eqn: equiboundedness} is inherited by the
limit. Fix $t\in[0,T]$. For $r>0$, define $\phi_r(x)\coloneqq \min\{1+x,r\}$. Since $\phi_r\in L^\infty(0,\infty)$ and $\f_n(t)\rightharpoonup \f^\star(t)$
weakly in $X_0$, we have
\begin{equation*}
    \int_0^\infty \f_n(t,x)\phi_r(x)\,dx
    \longrightarrow
    \int_0^\infty \f^\star(t,x)\phi_r(x)\,dx.
\end{equation*}
Moreover, $0\leq \phi_r(x)\le 1+x$, hence
\begin{equation*}
    \int_0^\infty \f^\star(t,x)\phi_r(x)\,dx
    \leq
    \limsup_{n\to\infty}\int_0^\infty \f_n(t,x)(1+x)\,dx
    \leq \mathbf C_{eb}.
\end{equation*}
Since $\phi_r\uparrow 1+x$ as $r\to\infty$ and $\f^\star(t,\cdot)\ge 0$ a.e.,
the monotone convergence theorem yields
\begin{equation}\label{eqn: limit equiboundedness}
    \int_0^\infty \f^\star(t,x)(1+x)\,dx=\norm{\f^\star(t)}_{0,1}\leq \mathbf C_{eb}
\end{equation}
Thus, \eqref{eqn: equiboundedness} also holds for $\f^\star$.

We now show that $\f^\star$ is a solution of \eqref{eqn: cf_equation} with the limit control $\control^\star$ and initial condition $f_{\mathrm{in}}$.

For every $n\in\N$, $t\in[0,T]$ and a.e.\ $x\in(0,\infty)$ it holds
\begin{equation*}
        \f_n(t,x) = f_{\mathrm{in}}(x)
    + \int_0^t \biggl[
        \control_n(\tau)\,\C^n\bigl(\f_n(\tau)\bigr)(x)
        + \F^n\f_n(\tau)(x)
    \biggr]\,d\tau.
\end{equation*}
We know that, for each fixed $t\in[0,T]$, the left-hand side converges weakly to $\f^\star(t)$ in $L^1(0,\infty)$ and hence in $L^1(0,R)$ for every $R>0$. We now show that, for every $R>0$ and every $t\in[0,T]$, also the sequence of right-hand sides
\begin{equation*}
        x \mapsto f_{\mathrm{in}}(x)
    + \int_0^t \biggl[
        \control_n(\tau)\,\C^n\bigl(\f_n(\tau)\bigr)(x)
        + \F^n\f_n(\tau)(x)
    \biggr]\,d\tau
\end{equation*}
converges weakly in $L^1(0,R)$, and that its limit coincides with
\begin{equation*}
        x \mapsto f_{\mathrm{in}}(x)
    + \int_0^t \biggl[
        \control^\star(\tau)\,\C^\star\bigl(\f^\star(\tau)\bigr)(x)
        + \F^\star \f^\star(\tau)(x)
    \biggr]\,d\tau.
\end{equation*}

To this end we fix $R>0$ and a test function $\varphi\in L^\infty(0,R)$, and prove the convergence of the corresponding duality pairings for each operator $\C^n_l$, $\C^n_g$, $\F^n_l$ and $\F^n_g$ independently.

\smallskip

\textbf{Coagulation loss.}
We want to prove that
\begin{equation*}
    \lim_{n\to+\infty}
    \int_0^R \varphi(x)\int_0^t \control_n(\tau)\,\C^n_l\bigl(\f_n(\tau)\bigr)(x)\,d\tau\,dx
    =
    \int_0^R \varphi(x)\int_0^t \control^\star(\tau)\,\C^\star_l\bigl(\f^\star(\tau)\bigr)(x)\,d\tau\,dx.
\end{equation*}
First,
we claim that, for every $\tau\in[0,T]$ and $x\in[0,R]$,
\begin{equation}\label{eqn: coagulation loss pw convergence}
        \int_0^\infty K_n(x,y)\,\f_n(\tau,y)\,dy
    \longrightarrow
    \int_0^\infty K^\star(x,y)\,\f^\star(\tau,y)\,dy
    \quad\text{as }n\to\infty.
\end{equation}
Indeed, for every $r>0$ we have
\begin{align*}
    \biggl\lvert
        \int_0^\infty
        \bigl( K_n(x,y)\f_n(\tau,y) - K^\star(x,y)\f^\star(\tau,y) \bigr)\,dy
    \biggr\rvert
    &\leq \biggl\lvert
    \int_0^r
         K_n(x,y)\f_n(\tau,y) - K^\star(x,y)\f^\star(\tau,y) \,dy\biggr\rvert\\
    &\quad
    + \int_r^\infty
        \bigl\lvert K_n(x,y)\f_n(\tau,y) - K^\star(x,y)\f^\star(\tau,y) \bigr\rvert\,dy.
\end{align*}
The first term on the right-hand side goes to zero, as $n\to\infty$, due to the weak convergence of $\f_n(\tau)$ to $\f^\star(\tau)$ in $X_0$ and the uniform convergence of the coagulation kernels on $[0,R]\times[0,r]$. For the second term, using the uniform growth bound on $K_n$ and $K^\star$, we obtain the uniform estimate
\begin{align*}
    \int_r^\infty
        \bigl\lvert K_n(x,y)\f_n(\tau,y) - K^\star(x,y)\f^\star(\tau,y) \bigr\rvert\,dy
    &\leq
    2\,\hat{K}_0\,\mathbf{C}_{eb}\,\frac{1+R}{(1+r)^{1-\hat{\mu}}},
\end{align*}
which can be made arbitrarily small by choosing $r$ large enough, uniformly in $n$, $\tau$ and $x\in[0,R]$. Hence, \eqref{eqn: coagulation loss pw convergence} is proved.

Next, the sequence of functions
\begin{equation*}
    x \mapsto \int_0^\infty K_n(x,y)\,\f_n(\tau,y)\,dy
\end{equation*}
and its limit are uniformly bounded on $[0,R]$, due to the sublinear growth of the coagulation kernels and \eqref{eqn: equiboundedness}. Combining the weak convergence of $\f_n(\tau)$ to $\f^\star(\tau)$ and the pointwise convergence we just proved, we can apply Proposition~\ref{propo: egorov} and obtain the following pointwise convergence in $\tau$:
\begin{equation*}
        \lim_{n\to\infty}
    \int_0^R \varphi(x)\,\f_n(\tau,x)\int_0^\infty K_n(x,y)\,\f_n(\tau,y)\,dy\,dx
    =
    \int_0^R \varphi(x)\,\f^\star(\tau,x)\int_0^\infty K^\star(x,y)\,\f^\star(\tau,y)\,dy\,dx.
\end{equation*}
Using again the uniform bounds on the kernels, the boundedness of $\varphi$ and \eqref{eqn: equiboundedness}, it is easy to see that the sequence of functions of $\tau$ on the left-hand side and its limit are uniformly bounded on $[0,T]$. Since $\control_n$ converges weakly in $L^2(0,T)$ and hence in $L^1(0,T)$, we can now further apply Proposition~\ref{propo: egorov} to conclude that the sequence obtained by multiplying these functions by $\control_n(\tau)$ converges weakly in $L^1(0,T)$ to the corresponding limit with $\control^\star(\tau)$.

Recalling the definition of $\C_l$ and testing in time against $\chi_{[0,t]}\in L^\infty(0,T)$, we obtain
\begin{equation*}
        \lim_{n\to+\infty}\int_0^t \control_n(\tau)\int_0^R\varphi(x)\,\C^n_l\bigl(\f_n(\tau)\bigr)(x)\,dx\,d\tau
    =
    \int_0^t \control^\star(\tau)\int_0^R\varphi(x)\,\C^\star_l\bigl(\f^\star(\tau)\bigr)(x)\,dx\,d\tau.
\end{equation*}
Exchanging the order of integration by means of the Fubini--Tonelli theorem gives the desired weak convergence for the coagulation loss term.

\smallskip

\textbf{Coagulation gain.}
We now need to prove that
\begin{equation}\label{eqn: coagulation gain convergence}
    \lim_{n\to+\infty}
    \int_0^R\varphi(x)\int_0^t \control_n(\tau)\,\C^n_g\bigl(\f_n(\tau)\bigr)(x)\,d\tau\,dx
    =
    \int_0^R\varphi(x)\int_0^t \control^\star(\tau)\,\C^\star_g\bigl(\f^\star(\tau)\bigr)(x)\,d\tau\,dx.
\end{equation}
By changing the order of integration and applying the usual change of variables in the gain term, we obtain that for every $\tau\in[0,T]$
\begin{equation*}
        \int_0^R \varphi(x)\,\C^n_g\bigl(\f_n(\tau)\bigr)(x)\,dx
    =
    \frac{1}{2}\int_0^R \int_0^{R-y}
        \varphi(x+y)\,K_n(x,y)\,\f_n(\tau,x)\,\f_n(\tau,y)\,dx\,dy.
\end{equation*}
For each fixed $y\in[0,R]$ the map $x\mapsto\varphi(x+y)K_n(x,y)$ converges uniformly on $[0,R-y]$ to $x\mapsto\varphi(x+y)K^\star(x,y)$. Therefore, the same argument based on Proposition~\ref{propo: egorov} that we used for the coagulation loss term applies here as well, and yields \eqref{eqn: coagulation gain convergence}. In this case no tail-control argument is required, since the domain of integration in $x$ and $y$ is bounded.

\smallskip

\textbf{Fragmentation loss.}
We now show that
\begin{equation}\label{eqn: fragmentation loss convergence}
    \lim_{n\to+\infty}\int_0^R \varphi(x)\int_0^t\alpha_n(x)\,\f_n(\tau,x)\,d\tau\,dx
    =
    \int_0^R \varphi(x)\int_0^t\alpha^\star(x)\,\f^\star(\tau,x)\,d\tau\,dx.
\end{equation}
In this case the control does not appear, and the argument is simpler. After exchanging the order of integration in $x$ and $\tau$, it is enough to pass to the limit inside the integral in time.

For every fixed $\tau\in[0,T]$,
\begin{equation*}
        \lim_{n\to\infty}
    \int_0^R \varphi(x)\,\alpha_n(x)\,\f_n(\tau,x)\,dx
    =
    \int_0^R \varphi(x)\,\alpha^\star(x)\,\f^\star(\tau,x)\,dx
\end{equation*}
follows from the uniform convergence of $\alpha_n$ to $\alpha^\star$ on $[0,R]$ and the weak convergence of $\f_n(\tau)$ to $\f^\star(\tau)$ in $X_0$. Moreover, the uniform bound on $\alpha_n$ yields
\begin{equation*}
        \biggl\lvert \int_0^R \varphi(x)\,\alpha_n(x)\,\f_n(\tau,x)\,dx \biggr\rvert
    \leq \|\varphi\|_\infty\,\hat{\alpha}_0\,R^{\hat{\lambda}}\,\mathbf{C}_{eb},
\end{equation*}
so the dominated convergence theorem applies in $\tau$ and gives \eqref{eqn: fragmentation loss convergence}.

\smallskip

\textbf{Fragmentation gain.}
To prove the convergence of the last duality pairing limit we introduce the functions
\begin{equation*}
        \xi_n(y)\coloneqq\int_0^{\min\{y,R\}}\varphi(x)\,b_n(x,y)\,\alpha_n(y)\,dx,
    \qquad
    \xi^\star(y)\coloneqq\int_0^{\min\{y,R\}}\varphi(x)\,b^\star(x,y)\,\alpha^\star(y)\,dx,
\end{equation*}
defined for every $y\in(0,\infty)$. Due to (v) in Lemma \ref{lemma: consequences C} we have $\xi_n(y)\to\xi^\star(y)$ pointwise for all $y>0$,
\begin{equation}\label{eqn: xi bound}
        |\xi_n(y)|,\;|\xi^\star(y)| \leq \|\varphi\|_\infty\,\hat{\rho}_R(R)(1+y^{\hat q})
    \qquad \text{for all }y>0,
\end{equation}
with $\hat q<1$.

After a standard change in the order of integration, our goal reduces to proving
\begin{equation}\label{eqn:fragmentation-gain-goal}
    \lim_{n\to\infty}\int_0^t\int_0^\infty \f_n(\tau,y)\,\xi_n(y)\,dy\,d\tau
    = \int_0^t\int_0^\infty \f^\star(\tau,y)\,\xi^\star(y)\,dy\,d\tau.
\end{equation}

We first observe that
\begin{equation}\label{eqn:frag-gain-1}
    \lim_{n\to\infty}\int_0^t\int_0^\infty
        \bigl(\f_n(\tau,y)-\f^\star(\tau,y)\bigr)\,\xi_n(y)\,dy\,d\tau = 0.
\end{equation}
For each fixed $\tau\in[0,T]$, the inner integrals over $(0,\infty)$ converge to zero as we can split them over $(0,r)$ and $(r,\infty)$ with $r>0$. The integrals over $(0,r)$ converge by Proposition~\ref{propo: egorov}, which combines the weak convergence of $\f_n(\tau)$ to $\f^\star(\tau)$, the pointwise convergence $\xi_n\to\xi^\star$ and the uniform bound on $(0,r)$ due to \eqref{eqn: xi bound}. On the other hand the integral over $(r,\infty)$ is controlled by \eqref{eqn: xi bound} through the equi-boundedness \eqref{eqn: equiboundedness}.  The same bounds, together with \eqref{eqn: equiboundedness}, provide a uniform $L^1(0,T)$ bound, so the dominated convergence theorem applies in time and yields \eqref{eqn:frag-gain-1}.

We now show that
\begin{equation}\label{eqn:frag-gain-2}
    \lim_{n\to\infty}\int_0^t\int_0^\infty
        \f^\star(\tau,y)\,\bigl(\xi_n(y)-\xi^\star(y)\bigr)\,dy\,d\tau = 0.
\end{equation}
Once again, the integrals are uniformly bounded in time, so it is enough to prove the pointwise convergence for each fixed $\tau\in[0,T]$. Let $r>1$. Then
\begin{equation*}
        \biggl|\int_0^\infty \f^\star(\tau,y)\,\bigl(\xi_n(y)-\xi^\star(y)\bigr)\,dy\biggr|
    \leq
    \int_0^r \f^\star(\tau,y)\,\bigl|\xi_n(y)-\xi^\star(y)\bigr|\,dy
    + \int_r^\infty \f^\star(\tau,y)\,\bigl|\xi_n(y)-\xi^\star(y)\bigr|\,dy.
\end{equation*}
By the dominated convergence theorem, the first term on the right-hand side converges to zero as $n\to\infty$: we are working on the bounded set $(0,r)$, where $\xi_n\to\xi^\star$ almost everywhere and the sequence $(\xi_n)_n$ is uniformly bounded.

On the other hand, on $(r,\infty)$,  the bounds \eqref{eqn: limit equiboundedness} and \eqref{eqn: xi bound} yield the uniform tail estimate
\begin{equation*}
        \int_r^\infty \f^\star(\tau,y)\,\bigl|\xi_n(y)-\xi^\star(y)\bigr|\,dy
    \leq
    4\,\|\varphi\|_\infty\,\hat \rho_R(R)\int_r^\infty (1+y)^{\hat q}\f^\star(\tau,y)\,dy
    \leq
    \frac{4\,\mathbf{C}_{eb}\,\|\varphi\|_\infty\,\hat \rho_R(R)}{(1+r)^{1-\hat q}},
\end{equation*}
so the right-hand side can be made arbitrarily small by choosing $r$ large enough, uniformly in $n$ and $\tau$. This proves \eqref{eqn:frag-gain-2}. Combining \eqref{eqn:frag-gain-1} and \eqref{eqn:frag-gain-2} we obtain \eqref{eqn:fragmentation-gain-goal}, as desired.

\smallskip

We have thus proved the desired weak convergence in $L^1(0,R)$ for every $R>0$. Therefore, by uniqueness of the weak limit, we obtain that for a.e.\ $x\in(0,R)$ and all $t\in[0,T]$ it holds
\begin{equation*}
        \f^\star(t,x) = f_{\mathrm{in}}(x)
    + \int_0^t \biggl[
        \control^\star(\tau)\,\C^\star\bigl(\f^\star(\tau)\bigr)(x)
        + \F^\star\f^\star(\tau)(x)
    \biggr]\,d\tau.
\end{equation*}
Given the arbitrariness of $R$, the previous equality holds for almost every $x\in(0,\infty)$. This shows that $\f^\star$ is a solution of the limit coagulation-fragmentation equation with control $\control^\star$ and initial condition $f_{\mathrm{in}}$. Since this Cauchy problem admits a unique solution, every subsequence of $(\f_n)_{n\in\N}$ admits a subsubsequence converging to $\f^\star$. Therefore, the whole sequence $(\f_n)_{n\in\N}$ converges to $\f^\star$ weakly in $X_0$, uniformly in time, and the proof is complete.

\qed

\subsection{Proofs of existence and stability}\label{subsec: existence of optimal controls and stability}

In this subsection, we provide the proofs for the main results stated in the introduction of this section, relying on the continuity properties established in Theorem~\ref{thm: weak dependence on controls}.

\begin{proof}[Proof of Theorem~\ref{thrm: existence of optimal controls}]
The set of admissible controls $\U$ is closed, convex, and bounded in $L^2(0,T)$, and is therefore weakly compact. 
Theorem~\ref{thm: weak dependence on controls} ensures that the control-to-state map is continuous with respect to the weak topology of $\U$; specifically, if $\control_n \rightharpoonup \control^\star$ in $L^2(0,T)$, then $f_{\control_n}(T) \rightharpoonup f_{\control^\star}(T)$ in $X_0$. 
Since $\psi$ is lower semi-continuous with respect to the weak topology in $X_0$ and the quadratic running cost is convex and continuous (hence weakly lower semi-continuous), the functional
\begin{equation*}
    \control \;\mapsto\; \frac{w}{2}\int_0^T (\control(t)-1)^2\,\mathrm{d}t + \psi \bigl(\f_\control(T)\bigr)
\end{equation*}
is sequentially weakly lower semi-continuous on the weakly compact set $\U$. The existence of a global minimiser follows by the direct method of the calculus of variations.
\end{proof}

Next, we address the stability of optimal controls. We recall the functionals $J_n$ and $J^\star$ defined at the beginning of the section associated with the converging coefficients. 

\begin{proof}[Proof of Theorem~\ref{thrm: gamma convergence}]
We verify the two conditions for $\Gamma$-convergence in the weak topology of $L^2(0,T)$.

\vspace{0.1cm}

\textbf{$\boldsymbol{\Gamma}$-liminf inequality.}
Let $\control_n \rightharpoonup \control$ in $\U$. The quadratic term is convex and continuous, so by weak lower semi-continuity:
\begin{equation*}
    \frac{w}{2}\int_0^T (\control(t)-1)^2\,\mathrm{d}t
    \;\leq\;
    \liminf_{n\to\infty} \frac{w}{2}\int_0^T (\control_n(t)-1)^2\,d t.
\end{equation*}
Regarding the terminal cost, Theorem~\ref{thm: weak dependence on controls} implies that $\f^n_{\control_n}(T) \rightharpoonup \f^\star_\control(T)$ in $X_0$. Since $\psi$ is weakly continuous, we have
\begin{equation*}
    \psi\bigl(\f^\star_\control(T)\bigr)
    \;\leq\;
    \liminf_{n\to\infty} \psi\bigl(\f^n_{\control_n}(T)\bigr).
\end{equation*}
Summing these inequalities yields $J^\star(\control) \;\leq\; \liminf_{n\to\infty} J_n(\control_n)$.

\vspace{0.1cm}

\textbf{$\boldsymbol{\Gamma}$-limsup inequality.}
Fix any $\control \in\U$. We choose the constant recovery sequence $\control_n \coloneqq \control$ for all $n$. Applying Theorem~\ref{thm: weak dependence on controls} again (with constant controls), we obtain
\begin{equation*}
    \f^n_{\control}(T) \rightharpoonup \f^\star_{\control}(T)\quad\text{in }X_0.
\end{equation*}
$\psi$ is weakly continuous in $X_0$, therefore,
\begin{equation*}
        \psi\bigl(\f^n_{u}(T)\bigr) \to \psi\bigl(\f^\star_{\control}(T)\bigr).
\end{equation*}
Since the control is fixed, the running cost is constant for all $n$. Thus, $J_n(\control_n) \to J^\star(\control)$, establishing the $\Gamma$-limsup inequality.

\vspace{0.1cm}

\textbf{Convergence of minimisers.}
Since $\U$ is bounded, closed and convex in $L^2(0,T)$, it is weakly compact and thus the sequence of functionals is equi-coercive. By the fundamental theorem of $\Gamma$-convergence, any cluster point of a sequence of minimisers of $J_n$ is a minimiser of $J^\star$.
\end{proof}

\begin{remark}
This theorem provides a justification for the use of truncated kernels in the numerical optimisation procedure. While the gradient computations in Section~\ref{sec: gradients and optimality conditions} require bounded kernels to ensure Fréchet differentiability of the coagulation operator, Theorem~\ref{thrm: gamma convergence} ensures that the optimal controls obtained for these bounded approximations converge to an optimiser of the original physical problem with unbounded kernels.
\end{remark}

\section{First-order analysis and necessary optimality conditions}\label{sec: gradients and optimality conditions}

In this section we derive an adjoint-based representation of the first variation of the cost functional with respect to the control, together with a Pontryagin-type minimum principle for (locally) optimal control. We work in the weighted state space $X_{0,1}$ and in its dual space
$X_{\infty,1} \coloneqq X_{0,1}^*$, endowed with the duality pairing
$\langle\cdot,\cdot\rangle$; see
Subsection~\ref{subsec: adjoint space} for further details.  Throughout this section we assume that
the coagulation kernel $K$ is bounded:

\vspace{0.15cm}

\begin{itemize}
    \item[\textbf{(H2')}] $K\in L^\infty\bigl((0,\infty)\times(0,\infty)\bigr)$.
\end{itemize}

\vspace{0.15cm}

\noindent This implies that the coagulation operator $\C$ is differentiable on $X_{0,1}$; see Proposition~\ref{propo: coagulation operator}. Note that this is a strengthening of \textbf{(H2)} as it requires $\mu=0$.

Recall that, for $\control\in\U$, $\f_\control$ denotes the solution of
\eqref{eqn: cf_equation} with initial datum $f_{\mathrm{in}}$ and control
$\control$, and that the cost functional is
\begin{equation}\label{eqn: J-again}
  J(\control)=
  \frac{w}{2}\int_0^T\bigl(\control(t)-1\bigr)^2\,\mathrm{d}t
  + \psi\bigl(\f_\control(T)\bigr),
\end{equation}
where $\psi\colon X_{0,1}\to\R$ is Fréchet differentiable. Throughout this section, the initial datum $f_{\mathrm{in}}$ is fixed once and
for all. Accordingly, the dependence of constants on $f_{\mathrm{in}}$ will be
omitted.

The section contains two main results: a first-order expansion of the cost functional along admissible directions, together
with Lipschitz continuity of this gradient, and a Pontryagin minimum
principle. The presentation relies on the equivalence between the notion of
solution introduced in Definition~\ref{def: solution} and the mild semigroup
formulation proved in Subsection~\ref{subsec: mild}.

We denote by $\langle\cdot,\cdot\rangle_2$ and $\|\cdot\|_2$ the scalar
product and norm on $L^2(0,T)$.

\vspace{0.25cm}

\begin{mdframed}[hidealllines=false,backgroundcolor=blue!5]
\begin{theorem}[First-order expansion of the cost functional]\label{thrm: gradients}
Let $\psi\colon X_{0,1}\to\mathbb{R}$ be Fréchet differentiable with
differential $D\psi(f)\in X_{\infty,1}$ for every $f\in X_{0,1}$.
Then, for every $\control^\star\in\U$ and every direction $\delta \control\in L^2(0,T)$
such that $\control^\star+\varepsilon\delta \control\in \U$ for all sufficiently small $\varepsilon>0$, we have
\begin{equation}\label{eqn: directional derivative J}
    J\bigl(\control^\star+\varepsilon\delta \control\bigr)
    = J\bigl(\control^\star\bigr)
      +\varepsilon\bigl\langle\nabla_u J(\control^\star), \delta \control\bigr\rangle_2
      + o (\varepsilon),
\end{equation}
where $\nabla_u J(\control^\star)\in L^2(0,T)$ is given by
\begin{equation}\label{eq:grad-explicit}
    \nabla_u J(\control^\star)(t)
    = w \bigl(\control^\star(t)-1\bigr)
      + \big\langle \bfphi_{\control^\star}(t), \mathcal{C}\f_{\control^\star}(t)\big\rangle,
    \qquad t\in[0,T].
\end{equation}
Here $\bfphi_{\control^\star}$ is the unique solution of the backward adjoint
Cauchy problem
\begin{equation}\label{eqn: adjoint Cauchy problem}
\begin{cases}
\partial_t \varphi(t)
   =-\F^*\varphi(t)-\control^\star(t)\, D\C[\f_{\control^\star}(t)]^*\varphi(t),\\[2pt]
\varphi(T)= D\psi\bigl(\f_{\control^\star}(T)\bigr).
\end{cases}
\end{equation}
Moreover, if $D\psi$ is Lipschitz on bounded subsets of $X_{0,1}$,
then the map
\begin{equation*}
    \U\ni \control\longmapsto \nabla_uJ(\control)\in L^2(0,T)
\end{equation*}
is Lipschitz continuous.
\end{theorem}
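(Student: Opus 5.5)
We work throughout in the mild semigroup framework, using the equivalence between Definition~\ref{def: solution} and the mild formulation announced for Subsection~\ref{subsec: mild}. Thus $\f_\control$ satisfies
\begin{equation*}
\f_\control(t)=\T_\F(t)f_{\mathrm{in}}+\int_0^t \T_\F(t-s)\,\control(s)\,\C\f_\control(s)\,ds.
\end{equation*}
Under \textbf{(H2')}, $\C$ is a continuous bilinear quadratic map on $X_{0,1}$ with $\|\C f\|_{0,1}\le c\|K\|_\infty\|f\|_0\|f\|_{0,1}$. Its Fréchet derivative $D\C[f]h=2\,\mathcal B(f,h)$ is Lipschitz in $f$ on bounded sets. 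The bound~\eqref{eqn: M_0 estimate} gives a uniform a priori bound $\|\f_\control(t)\|_{0,1}\le C$ for all $\control\in\U$.

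\textbf{Step 1: Lipschitz dependence and linearisation.} Set $\control_\varepsilon=\control^\star+\varepsilon\delta\control$ and $\f_\varepsilon=\f_{\control_\varepsilon}$. Subtracting the mild formulations, using bilinearity and Grönwall, gives
\begin{equation*}
\sup_t\|\f_\varepsilon(t)-\f^\star(t)\|_{0,1}\le C\varepsilon\|\delta\control\|_2 .
\end{equation*}
Next, let $\mathbf h$ be the mild solution of the linearised problem
\begin{equation*}
\mathbf h(t)=\int_0^t\T_\F(t-s)\bigl[\control^\star(s)D\C[\f^\star(s)]\mathbf h(s)+\delta\control(s)\C\f^\star(s)\bigr]ds .
\end{equation*}
This is a linear Volterra equation with bounded perturbation, so it is uniquely solvable by a fixed point. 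Write the remainder $r_\varepsilon=\f_\varepsilon-\f^\star-\varepsilon\mathbf h$. It satisfies a mild equation whose forcing consists of $\control_\varepsilon\,\C(\f_\varepsilon-\f^\star)$ quadratic terms, which are $O(\varepsilon^2)$, and $\varepsilon\delta\control\,D\C[\f^\star](\f_\varepsilon-\f^\star)$, which is also $O(\varepsilon^2)$. Grönwall then gives $\sup_t\|r_\varepsilon\|_{0,1}=O(\varepsilon^2)$. Fréchet differentiability of $\psi$ yields
\begin{equation*}
J(\control_\varepsilon)=J(\control^\star)+\varepsilon\Bigl[w\langle\control^\star-1,\delta\control\rangle_2+\langle D\psi(\f^\star(T)),\mathbf h(T)\rangle\Bigr]+o(\varepsilon).
\end{equation*}

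\textbf{Step 2: the adjoint and the duality identity; this is the main obstacle.} Since $X_{0,1}$ is not reflexive, $\F^*$ generates only a weak-$*$ continuous semigroup $\T_\F(t)^*$ on $X_{\infty,1}$. We therefore define $\bfphi$ as the mild weak-$*$ solution
\begin{equation*}
\bfphi(t)=\T_\F(T-t)^*D\psi(\f^\star(T))+\int_t^T\T_\F(s-t)^*\,\control^\star(s)\,D\C[\f^\star(s)]^*\bfphi(s)\,ds,
\end{equation*}
with the integral understood in the weak-$*$ sense. Existence and uniqueness follow by a contraction in $L^\infty(0,T;X_{\infty,1})$ on short intervals, iterated, since $D\C[\f^\star(s)]^*$ is uniformly bounded. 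Measurability issues are handled by pairing with elements of $X_{0,1}$.

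The key identity is $\langle\bfphi(T),\mathbf h(T)\rangle=\int_0^T\delta\control(t)\langle\bfphi(t),\C\f^\star(t)\rangle\,dt$. I would not differentiate $t\mapsto\langle\bfphi(t),\mathbf h(t)\rangle$, because of the domain issues with $\F^*$. Instead I would insert the mild formula for $\mathbf h(T)$ into the pairing and move $\T_\F^*$ across. Fubini in the pairings, together with the Volterra structure of the two equations, should make the $D\C$ terms cancel; resolvent-kernel manipulations on the iterated series are the fallback. Alternatively, I would approximate $\F$ by bounded Yosida approximations $\F_k$, for which the computation is a genuine product rule, and pass to the limit using strong convergence of $\T_{\F_k}$ and weak-$*$ convergence of the adjoint semigroups. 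Combined with Step 1, this identity gives \eqref{eqn: directional derivative J}--\eqref{eq:grad-explicit}. Note that $\nabla_uJ\in L^\infty\subset L^2$ by the bounds above.

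\textbf{Step 3: Lipschitz continuity of the gradient.} For $\control_1,\control_2\in\U$, Step 1 already gives $\sup_t\|\f_1-\f_2\|_{0,1}\le C\|\control_1-\control_2\|_2$. The terminal data then satisfy $\|D\psi(\f_1(T))-D\psi(\f_2(T))\|\le L\,C\|\control_1-\control_2\|_2$, using the bounded-set Lipschitz hypothesis. In the adjoint mild equation, the difference of the coefficients, namely $\control_1D\C[\f_1]^*-\control_2D\C[\f_2]^*$, has two parts. The $\f$-difference part is bounded in sup norm by the estimate above. The $\control$-difference part is bounded by $|\control_1-\control_2|(s)$, which after time integration gives an $L^1$, hence $L^2$, bound. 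A backward Grönwall argument gives
\begin{equation*}
\sup_t\|\bfphi_1(t)-\bfphi_2(t)\|_{X_{\infty,1}}\le C\|\control_1-\control_2\|_2 .
\end{equation*}
Finally, $\langle\bfphi_1,\C\f_1\rangle-\langle\bfphi_2,\C\f_2\rangle$ splits into two pieces that are each controlled by these bounds, and the linear term $w(\control_1-\control_2)$ is trivial, which concludes the proof.
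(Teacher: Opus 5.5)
Your proof is correct. Steps 1 and 3 match the paper in substance: Lipschitz dependence of the state, linearisation in the mild formulation, and a backward Grönwall estimate for the adjoint combined with the Lipschitz property of $D\psi$ on bounded sets. That Lipschitz property also supplies the uniform bound on $D\psi(\f_\control(T))$ that you use tacitly. One small gain on your side: because $\C$ is exactly quadratic under \textbf{(H2')}, your remainder is $O(\varepsilon^2)$, sharper than the paper's $o(\varepsilon)$.

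The real difference is Step 2. The paper never proves the duality identity by hand. It builds the evolution family $\Phi_{\control^\star}$ and writes $\delta\f(T)=\int_0^T\Phi_{\control^\star}(T,\tau)\,\delta\control(\tau)\,\C\f^\star(\tau)\,d\tau$. It then \emph{defines} $\bfphi(\tau)\coloneqq\Phi_{\control^\star}(T,\tau)^*D\psi(\f^\star(T))$, so the gradient formula is pure duality. The work moves to checking that this $\bfphi$ solves the weak-$*$ mild adjoint equation, via the other form of the Dyson--Phillips identity, $\Phi(T,s)=\T_{\F}(T-s)+\int_s^T\Phi(T,\tau)B(\tau)\T_{\F}(\tau-s)\,d\tau$.

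You instead construct $\bfphi$ by a fixed point and must then prove the identity. Do the fixed point in the space of bounded weak-$*$ continuous maps into $X_{\infty,1}$ with the sup norm, which is complete; a Bochner $L^\infty$ space is ill-suited because $X_{\infty,1}$ is nonseparable. Your Fubini plan does close the identity, with no resolvent series or Yosida approximation needed. Set $B=\control^\star D\C[\f^\star]$, $g=\delta\control\,\C\f^\star$ and $G(r)=\int_0^r\T_{\F}(r-s)g(s)\,ds$.
\begin{itemize}
\item Pair the adjoint mild equation at time $s$ with $B(s)\mathbf h(s)$, integrate in $s$, and swap the integrals.
\item Use $\int_0^r\T_{\F}(r-s)B(s)\mathbf h(s)\,ds=\mathbf h(r)-G(r)$. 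The terms $\int_0^T\langle\bfphi,B\mathbf h\rangle$ then cancel, leaving $\langle D\psi(\f^\star(T)),\int_0^T\T_{\F}(T-s)B(s)\mathbf h(s)\,ds\rangle=\int_0^T\langle\bfphi(r),B(r)G(r)\rangle\,dr$.
\item The same swap gives $\int_0^T\langle\bfphi,g\rangle\,dt=\langle D\psi(\f^\star(T)),\int_0^T\T_{\F}(T-t)g(t)\,dt\rangle+\int_0^T\langle\bfphi(r),B(r)G(r)\rangle\,dr$.
\item Comparing with the mild formula for $\mathbf h(T)$ yields the key identity.
\end{itemize}

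Your route is self-contained and gets uniqueness of the adjoint directly from the contraction. The paper's route makes the gradient formula immediate once $\Phi_{\control^\star}$ exists, and reuses that evolution family for the adjoint bounds.
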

\end{mdframed}

\vspace{0.15cm}

For later use, we introduce the reduced Hamiltonian
\begin{equation}\label{eq:hamiltonian}
    H\colon X_{0,1}\times X_{\infty,1}\times \R \to \R \hspace{1.5cm} H(f,\varphi,\omega)
    \coloneqq \frac{w}{2}(\omega-1)^2+\omega\langle \varphi, \C f\rangle.
\end{equation}

\vspace{0.15cm}

\begin{mdframed}[hidealllines=false,backgroundcolor=blue!5]
\begin{theorem}[Pontryagin minimum principle]\label{thm: PMP}
Assume that the terminal cost $\psi$ is Fréchet differentiable. Let $\control^\star\in\U$ be a locally optimal control in the strong  $L^2(0,T)$ topology and let $\f^\star=f_{\control^\star}$ be the corresponding optimal trajectory.
Let $\bfphi^\star=\bfphi_{\control^\star}$ denote the solution of the adjoint
equation~\eqref{eqn: adjoint Cauchy problem} associated with
$(\f^\star,\control^\star)$. Then, for almost every $t\in[0,T]$,
\begin{equation*}
    H\bigl(\f^\star(t), \bfphi^\star(t), \control^\star(t)\bigr)
    = \min_{\omega\in [u_{\min}, u_{\max}]} H\bigl(\f^\star(t), \bfphi^\star(t), \omega\bigr).
\end{equation*}
In particular, if $P_{[u_{\min},u_{\max}]}$ denotes the pointwise
projection from $\R$ onto $[u_{\min}, u_{\max}]$ and $w>0$, we have
\begin{equation}\label{eqn: feedback}
  \control^\star(t)
  \;=\;
  P_{[u_{\min},u_{\max}]}
  \Bigl( 1 - \tfrac{1}{w}\,\bigl\langle \bfphi^\star(t), \mathcal{C}\f^\star(t)\bigr\rangle \Bigr),
  \qquad\text{for a.e. }t\in[0,T].
\end{equation}
\end{theorem}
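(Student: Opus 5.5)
We derive the minimum principle from the first-order expansion in Theorem~\ref{thrm: gradients}, using needle-type (localised) variations admissible in $\U$, and then read off the pointwise characterisation from the structure of $H$.

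\textbf{Step 1: variational inequality.} Let $\control\in\U$ be arbitrary and set $\delta\control\coloneqq\control-\control^\star$. By convexity of $\U$, $\control^\star+\varepsilon\delta\control\in\U$ for every $\varepsilon\in[0,1]$. For small $\varepsilon$ these controls lie in the $L^2$-neighbourhood where $\control^\star$ is optimal. Inserting this into \eqref{eqn: directional derivative J} gives
\begin{equation*}
    0\le J(\control^\star+\varepsilon\delta\control)-J(\control^\star)=\varepsilon\langle\nabla_uJ(\control^\star),\delta\control\rangle_2+o(\varepsilon).
\end{equation*}
Dividing by $\varepsilon$ and letting $\varepsilon\downarrow0$ yields
\begin{equation*}
    \int_0^T g(t)\bigl(\control(t)-\control^\star(t)\bigr)\,dt\ge0 \qquad\text{for all }\control\in\U,
\end{equation*}
where $g(t)\coloneqq w(\control^\star(t)-1)+\langle\bfphi^\star(t),\C\f^\star(t)\rangle$ is the gradient given by \eqref{eq:grad-explicit}.

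\textbf{Step 2: localisation.} This is the main obstacle, and the measure-theoretic point is routine once we know $g\in L^1(0,T)$. Integrability of $g$ holds because $\bfphi^\star$ is bounded in $X_{\infty,1}$ uniformly in $t$ (it is the adjoint solution, with weak-$*$ measurability in $t$ supplied by Theorem~\ref{thrm: gradients}) and $\C\f^\star(t)$ is bounded in $X_{0,1}$ by (H2') together with \eqref{eqn: M_0 estimate}. Fix $\omega$ in a countable dense subset $Q$ of $[u_{\min},u_{\max}]$ and a Lebesgue point $t_0$ of both $g$ and $g\control^\star$. Test with $\control=\omega\mathds 1_{(t_0-h,t_0+h)}+\control^\star\mathds 1_{(t_0-h,t_0+h)^c}\in\U$, divide by $2h$ and let $h\to0$. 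This gives $g(t_0)(\omega-\control^\star(t_0))\ge0$. Since $Q$ is countable, the exceptional null sets can be taken uniformly in $\omega\in Q$, and continuity in $\omega$ extends the inequality to every $\omega\in[u_{\min},u_{\max}]$ for a.e.\ $t$.

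\textbf{Step 3: identification with the Hamiltonian.} For fixed $t$, the map $h_t(\omega)\coloneqq H(\f^\star(t),\bfphi^\star(t),\omega)$ is a convex quadratic polynomial in $\omega$ (affine if $w=0$), and $h_t'(\control^\star(t))=g(t)$. For a convex function on an interval, the variational inequality $h_t'(\control^\star(t))(\omega-\control^\star(t))\ge0$ for all $\omega$ is equivalent to $\control^\star(t)$ minimising $h_t$ over $[u_{\min},u_{\max}]$. This is the minimum principle. If $w>0$, then $h_t$ is strictly convex, its unconstrained minimiser is $1-\tfrac1w\langle\bfphi^\star(t),\C\f^\star(t)\rangle$, and the constrained minimiser of a one-dimensional strictly convex quadratic is the projection of the unconstrained one onto the interval. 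This gives \eqref{eqn: feedback}.
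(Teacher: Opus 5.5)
Your proposal is correct and follows essentially the same route as the paper. Both derive the variational inequality from convexity of $\U$ and the first-order expansion, localise by replacing $\control^\star$ with a constant $\omega$ on a small set, and recognise the pointwise condition as the first-order optimality condition of the convex map $\omega\mapsto H(\f^\star(t),\bfphi^\star(t),\omega)$. The only difference is cosmetic: you localise via Lebesgue points on shrinking intervals, where the exceptional null set is already independent of $\omega$ so the countable dense set is not needed. The paper instead tests with arbitrary Borel sets $A$ and uses rational $\omega$ to make the null set uniform.
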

\end{mdframed}

\vspace{0.15cm}

\begin{remark}
The terminal cost presented in the introduction 
\begin{equation*}
    \psi(f)=\pm\int_{x_{\min}}^{x_{\max}} f(x)\,dx
\end{equation*}
satisfies the assumptions of the previous theorems, indeed it is linear and continuous so that its differential is independent of $f$ and given by
\begin{equation*}
    D \psi (f) (x)= \pm\mathds{1}_{[x_{\min}, x_{\max}]}(x).
\end{equation*}
\end{remark}

The remainder of this section is devoted to the construction of the adjoint
equation in the dual space $X_{\infty,1}$ and to the proofs of
Theorems~\ref{thrm: gradients} and~\ref{thm: PMP}.

\subsection{Adjoint space and adjoint operators}\label{subsec: adjoint space}

Our adjoint variables live in the dual space
\begin{equation}\label{eqn: dual space}
  X_{\infty,1}\coloneqq X_{0,1}^*
  =\Bigl\{\varphi:(0,\infty)\to\R \text{ measurable} :
    \|\varphi\|_{\infty,1}
    \coloneqq \operatorname*{ess\,sup}_{x>0}
    \frac{|\varphi(x)|}{1+x} < \infty \Bigr\},
\end{equation}
equipped with the duality pairing 
\begin{equation}\label{eqn: duality pairing}
  \langle \varphi, f\rangle
  \coloneqq \int_0^\infty \varphi(x)\,f(x)\,\mathrm{d}x,
  \qquad
  \varphi\in X_{\infty,1},\; f\in X_{0,1}.
\end{equation}

The boundedness of $K$ implies that the coagulation operator
$\C\colon X_{0,1}\to X_{0,1}$ is continuously Fréchet differentiable (see
\cite[Theorem 8.1.1]{banasiak2019analytic}); for completeness we recall the
explicit formula and a local Lipschitz estimate.

\begin{proposition}\label{propo: coagulation operator}
The coagulation operator $\mathcal{C}\colon X_{0,1}\to X_{0,1}$ is
 of class $C^1$. For every $f\in X_{0,1}$, its Fréchet derivative $D\mathcal C[f]\in \mathcal L(X_{0,1})$
is given, for $g\in X_{0,1}$, by
\begin{align*}
    D\mathcal{C}[f] g (x)
    &= \int_0^x K(x-y, y)\, f(x-y)\, g(y)\,\mathrm{d}y \\
    &\quad - \int_0^\infty K(x,y)\, f(x)\,g(y)\,\mathrm{d}y
           - \int_0^\infty K(x,y)\, f(y)\,g(x)\,\mathrm{d}y .
\end{align*}
In particular, the map $ f\mapsto D\mathcal C[f]$
is linear and bounded from $X_{0,1}$ into $\mathcal L(X_{0,1})$ with
\begin{equation*}
    \|D\mathcal C[f]\|_{\mathcal L(X_{0,1})}
    \leq 4\|K\|_\infty \|f\|_{0,1},
    \qquad \forall f\in X_{0,1}.
\end{equation*}
Moreover, $\C$ is locally Lipschitz on bounded sets: if
$\norm{f_1}_{0,1}, \norm{f_2}_{0,1}\leq C$, then
\begin{equation*}
    \norm{\C f_2-\C f_1}_{0,1}\leq 4C\,\|K\|_\infty \,\norm{f_2-f_1}_{0,1}.
\end{equation*}
\end{proposition}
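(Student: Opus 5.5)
We will prove Proposition~\ref{propo: coagulation operator} by exploiting the fact that $\C$ is the restriction to the diagonal of a bounded symmetric bilinear map. Concretely, we set
\begin{equation*}
    B(f,g)(x)\coloneqq \frac12\int_0^x K(y,x-y)\,f(y)\,g(x-y)\,dy-\frac12 f(x)\int_0^\infty K(x,y)g(y)\,dy-\frac12 g(x)\int_0^\infty K(x,y)f(y)\,dy,
\end{equation*}
so that $\C f=B(f,f)$. By the symmetry \ref{hyp:symmetry} and the substitution $y\mapsto x-y$ in the gain part, $B$ is symmetric.

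\textbf{Boundedness of $B$.} The key step is the estimate
\begin{equation*}
    \norm{B(f,g)}_{0,1}\leq 2\|K\|_\infty\norm{f}_{0,1}\norm{g}_{0,1}.
\end{equation*}
\begin{itemize}
    \item For the gain term, we take absolute values and use Fubini--Tonelli with the change of variables $x=y+z$. Since $1+y+z\leq(1+y)(1+z)$, this gives
    \begin{equation*}
        \int_0^\infty(1+x)\,\Bigl|\tfrac12\int_0^x\cdots\Bigr|\,dx\leq \tfrac12\|K\|_\infty\norm{f}_{0,1}\norm{g}_{0,1}.
    \end{equation*}
    \item Each loss term is bounded directly by $\frac12\|K\|_\infty\norm{f}_{0,1}\norm{g}_0\leq\frac12\|K\|_\infty\norm{f}_{0,1}\norm{g}_{0,1}$.
\end{itemize}
Summing the three contributions gives the bound above, with constant $\frac32\|K\|_\infty$, which is at most $2\|K\|_\infty$. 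This step uses only the boundedness of $K$ from \textbf{(H2')} and the submultiplicativity of the weight $1+x$. I expect it to be the only genuinely computational point; the remaining care is in checking measurability and justifying Fubini, which Tonelli handles for the absolute values.

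\textbf{Differentiability.} Bilinearity and symmetry give the exact expansion
\begin{equation*}
    \C(f+g)-\C f-2B(f,g)=B(g,g),
\end{equation*}
whose norm is at most $2\|K\|_\infty\norm{g}_{0,1}^2=o(\norm{g}_{0,1})$. Hence $\C$ is Fréchet differentiable with $D\C[f]g=2B(f,g)$. Expanding $2B(f,g)$ and using the symmetry of the convolution term reproduces exactly the three-term formula in the statement. The map $f\mapsto D\C[f]=2B(f,\cdot)$ is linear, with
\begin{equation*}
    \|D\C[f]\|_{\mathcal L(X_{0,1})}\leq 4\|K\|_\infty\norm{f}_{0,1}.
\end{equation*}
A bounded linear map into $\mathcal L(X_{0,1})$ is continuous, so $\C$ is of class $C^1$.

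\textbf{Local Lipschitz estimate.} We use the polarisation identity
\begin{equation*}
    \C f_2-\C f_1=B(f_2-f_1,\,f_2+f_1).
\end{equation*}
It gives
\begin{equation*}
    \norm{\C f_2-\C f_1}_{0,1}\leq 2\|K\|_\infty\norm{f_2-f_1}_{0,1}\bigl(\norm{f_1}_{0,1}+\norm{f_2}_{0,1}\bigr)\leq 4C\|K\|_\infty\norm{f_2-f_1}_{0,1}
\end{equation*}
whenever both norms are bounded by $C$, which is the claimed estimate.
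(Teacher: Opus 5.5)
Your proof is correct, and it takes a different route from the paper. The paper gives no proof of this proposition: it cites \cite[Theorem 8.1.1]{banasiak2019analytic} for the $C^1$ property and records the derivative formula and the Lipschitz bound ``for completeness'', without deriving them.

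You instead give a self-contained argument. You write $\C f=B(f,f)$ with $B$ a bilinear form, which is symmetric precisely because of \ref{hyp:symmetry}. You bound $\norm{B(f,g)}_{0,1}\le\frac32\|K\|_\infty\norm{f}_{0,1}\norm{g}_{0,1}$ using Tonelli and the submultiplicativity $1+y+z\le(1+y)(1+z)$ of the weight. Everything else then follows algebraically:
\begin{itemize}
  \item $D\C[f]=2B(f,\cdot)$, from the exact expansion $\C(f+g)-\C f-2B(f,g)=B(g,g)$;
  \item the three-term formula, from expanding $2B(f,g)$ and substituting $y\mapsto x-y$ in the gain term;
  \item the local Lipschitz bound, from the polarisation identity $\C f_2-\C f_1=B(f_2-f_1,f_2+f_1)$.
\end{itemize}

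The citation is shorter and places the result within the general theory. Your route, however, actually verifies the explicit formula and constants that the paper only states, and it yields somewhat more:
\begin{itemize}
  \item the sharper constant $3\|K\|_\infty$ in place of $4\|K\|_\infty$, in both the derivative bound and the Lipschitz estimate;
  \item global Lipschitz continuity of $f\mapsto D\C[f]$;
  \item the observation that $\C$ is a continuous quadratic map, hence $C^\infty$.
\end{itemize}
It also makes clear that only the boundedness and symmetry of $K$ are used. Symmetry is needed both for $D\C[f]=2B(f,\cdot)$ and for the polarisation identity; without it you would use $B(f,g)+B(g,f)$ instead.
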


We next record the expressions of the adjoints of the linearised coagulation
operator and of the fragmentation operator.

\begin{lemma}[Adjoint operators]\label{lemma: adjoint operators}
Let $f\in X_{0,1}$. Then:
\begin{enumerate}[label=(\roman*)]
  \item The adjoint
        $D\mathcal{C}[f]^*: X_{\infty,1}\to X_{\infty,1}$ is given by
        \begin{align*}
          D\mathcal{C}[f]^* \varphi (x)
          &= \int_0^\infty K(y,x)\,f(y)\,\varphi(x+y)\,\mathrm{d}y \\
          &\quad - \int_0^\infty K(y,x)\,f(y)\,\varphi(y)\,\mathrm{d}y
                - \int_0^\infty K(x,y)\,f(y)\,\varphi(x)\,\mathrm{d}y.
        \end{align*}
        Moreover, there exists a constant $C>0$ (depending only on $K$)
        such that
        \begin{equation*}
            \norm{D\C[f]^*}
            \;\leq\; C\,\norm{K}_\infty\,\norm{f}_{0,1}
            \qquad\forall\,f\in X_{0,1}.
        \end{equation*}
  \item The adjoint fragmentation operator $\F^*\colon         \mathrm{dom}(\F^*) = L^\infty(0,\infty)
        \subseteq X_{\infty,1} \longrightarrow X_{\infty,1}$ is given by
        \begin{equation*}
            \F^*\varphi(x)
            = -\alpha(x)\,\varphi(x)
            + \int_0^x \alpha(x)\,b(y,x)\,\varphi(y)\,dy.
        \end{equation*}
\end{enumerate}
\end{lemma}

\begin{remark}\label{remark: adjoint fragmentation}
The space $X_{0,1}$ is not reflexive, and its dual
$X_{\infty,1}=X_{0,1}^*$ is strictly larger than $L^\infty(0,\infty)$.
Although $\F$ is densely defined on $X_{0,1}$, the natural
domain of its adjoint is $\mathrm{dom}(\F^*) = L^\infty(0,\infty)$, which
is only weak-$*$ dense in $X_{\infty,1}$ and not dense in the norm
topology. As a consequence, the adjoint (backward) equation will be formulated in the weak-$*$ sense, that is, we
characterise solutions by duality against every $f\in X_{0,1}$.
\end{remark}

\subsection{The mild formulation and the linearised dynamics}\label{subsec: mild}

In this part we deal with the linearisation of the coagulation-fragmentation equation. To do so, the notion of mild semigroup solution will be instrumental.

\begin{definition}\label{def: mild solution}
Given a nonnegative $f_{\mathrm{in}}\in X_{0,1}$ and a control $\control\in \U$, we say that $\f\colon [0,T]\to X_{0,1}$ is a \emph{mild solution} of \eqref{eqn: cf_equation} with such data if
\begin{equation*}
    \f(t)=\T_\F(t) f_{\mathrm{in}} +\int_0^t \T_\F(t-\tau)\,\control(\tau)\,\C \f(\tau)\, d\tau,
\end{equation*}
where $(\T_\F(t))_{t\ge 0}$ is the fragmentation semigroup on $X_{0,1}$.
\end{definition}

We now show that every solution of \eqref{eqn: cf_equation} according to Definition \ref{def: solution} is indeed a mild solution.

\begin{proposition}\label{prop: stewart implies mild}
Let $f_{\mathrm{in}}\in X_{0,1}$ be nonnegative and $\control\in \U$. Then \eqref{eqn: cf_equation}  admits a unique mild solution. Such solution coincides with the solution in the sense of Definition \ref{def: solution}.
\end{proposition}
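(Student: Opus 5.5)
Under (H2') the coagulation operator $\C$ is locally Lipschitz on bounded subsets of $X_{0,1}$ (Proposition~\ref{propo: coagulation operator}) and $\T_\F$ is a $C_0$-semigroup (Proposition~\ref{propo:fragmentation semigroup}). The plan has two parts: first prove existence and uniqueness of the mild solution by a standard fixed-point argument, then show that the Stewart solution $\f_\control$ of Theorem~\ref{theorem: well posedness} is itself a mild solution.

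\textbf{Existence and uniqueness of mild solutions.} Let $C_*$ be a bound to be chosen, and consider the closed ball $B=\{\f\in C([0,T_0],X_{0,1}):\sup_t\|\f(t)\|_{0,1}\le C_*\}$. On $B$ define
\begin{equation*}
\Phi(\f)(t)=\T_\F(t)f_{\mathrm{in}}+\int_0^t\T_\F(t-\tau)\control(\tau)\C\f(\tau)\,d\tau .
\end{equation*}
Using $\|\T_\F(t)\|\le M_\F e^{\omega_\F t}$, $|\control|\le u_{\max}$ and the local Lipschitz bound, $\Phi$ maps $B$ into itself and is a contraction when $T_0$ is small. This gives a unique local mild solution. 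Uniqueness on all of $[0,T]$ follows from a Grönwall argument on the difference of two mild solutions, which are bounded because they are continuous on a compact interval.

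Global existence does not need a separate a priori bound for mild solutions. Once we show that $\f_\control$ is mild, it is a global mild solution, and uniqueness makes it the only one. So everything reduces to the second part.

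\textbf{Stewart solutions are mild.} Let $\f=\f_\control$. By \eqref{eqn: M_0 estimate}, $\f$ is bounded in $X_{0,1}$ on $[0,T]$, so $g(\tau)\coloneqq \control(\tau)\C\f(\tau)$ lies in $L^\infty(0,T;X_{0,1})$. The aim is to show that $\f$ is a weak solution of the inhomogeneous linear problem $\partial_t\f=\F\f+g$. For this, fix $\varphi\in \mathrm{dom}(\F^*)=L^\infty(0,\infty)$, multiply \eqref{eqn: solution equation} by $\varphi$ and integrate in $x$. Fubini is justified by (S4) together with the uniform bound from \eqref{eqn: M_0 estimate}, and it gives
\begin{equation*}
\langle\varphi,\f(t)\rangle=\langle\varphi,f_{\mathrm{in}}\rangle+\int_0^t\bigl(\langle \F^*\varphi,\f(\tau)\rangle+\langle\varphi,g(\tau)\rangle\bigr)d\tau .
\end{equation*}
This is exactly where the formula of Lemma~\ref{lemma: adjoint operators}(ii) enters. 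Integrability of $\alpha \f$ in $X_0$ follows from $\alpha(x)\le\alpha_0(1+x)$.

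I then invoke Ball's theorem: weak solutions of $\dot{\f}=A\f+g$, tested against $\mathrm{dom}(A^*)$, coincide with mild solutions. This requires $\tau\mapsto\langle\varphi,\f(\tau)\rangle$ to be continuous. Lemma~\ref{lemma: equi-continuity} gives $\f\in C([0,T],X_0)$, and continuity in $X_{0,1}$ follows from mass conservation and tightness, or via weak continuity.

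\textbf{Main obstacle.} The delicate point is that test functions $\varphi\in L^\infty$ are not norm-dense in $X_{\infty,1}$. Ball's argument must therefore be adapted. The fix is to work instead with $\varphi\in \mathrm{dom}(\F^*)$ in the weak-$*$ sense and use that $\mathrm{dom}(\F^*)$ is weak-$*$ dense. This density is enough to separate points of $X_{0,1}$.

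Concretely, I compute $\frac{d}{d\tau}\langle \T_\F(t-\tau)^*\varphi,\f(\tau)\rangle$ for $\varphi\in \mathrm{dom}(\F^*)$. This uses $\T_\F(s)^*\mathrm{dom}(\F^*)\subseteq\mathrm{dom}(\F^*)$ together with the weak equation above. Integrating from $0$ to $t$ gives $\langle\varphi,\f(t)-\text{mild formula}\rangle=0$. Weak-$*$ density then yields equality in $X_{0,1}$.
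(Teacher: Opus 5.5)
Your route differs from the paper's, and several pieces of it are sound:
\begin{itemize}
\item the contraction argument and Gr\"onwall uniqueness for mild solutions;
\item the observation that global existence comes for free once $\f_\control$ is shown to be mild;
\item the $X_{0,1}$-continuity of $\f_\control$;
\item the weak identity for \emph{bounded} test functions $\varphi$.
\end{itemize}
The gap is in passing from that identity to the Duhamel formula. Ball's theorem needs the identity for \emph{every} $\varphi\in\mathrm{dom}(\F^*)$. Your direct computation of $\frac{d}{d\tau}\langle\T_\F(t-\tau)^*\varphi,\f(\tau)\rangle$ needs it for the test functions $\T_\F(t-\tau)^*\varphi$. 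But your Fubini justification (via \textbf{(S4)}, $\alpha(x)\le\alpha_0(1+x)$ and the $X_{0,1}$ bound) only covers $\varphi\in L^\infty(0,\infty)$.

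The identification $\mathrm{dom}(\F^*)=L^\infty(0,\infty)$ from Lemma~\ref{lemma: adjoint operators}(ii), on which you rely, does not hold. By \eqref{eqn:local conservation of mass}, $\langle x,\F f\rangle=0$ for every $f\in\mathrm{dom}(\F)$. Hence $\varphi(x)=x$ lies in $\mathrm{dom}(\F^*)$ with $\F^*\varphi=0$, although $\varphi\notin L^\infty$. Nor is $L^\infty$ invariant under the adjoint semigroup. For the constant function $\mathds{1}$, $\T_\F(s)^*\mathds{1}(y)$ is the expected number of fragments at time $s$ generated by a particle of initial size $y$. When $\alpha_0>0$ this is unbounded in $y$: the mass $y$ is conserved while the fragments are driven below a size depending only on $s$. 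So your chain ``$\T_\F(s)^*\mathrm{dom}(\F^*)\subseteq\mathrm{dom}(\F^*)=L^\infty$, hence the weak equation applies'' breaks at one of its two links, whichever domain you mean.

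The obstacle you single out is not the real one. Ball's theorem holds verbatim in any Banach space; its uniqueness proof uses only that the generator is closed and densely defined, via Hahn--Banach. Weak-$*$ density enters only when separating points at the very end, where $L^\infty$ already suffices. What is actually missing is the weak identity on a $\T_\F^*$-invariant class, i.e.\ for test functions of linear growth. There are three ways to get it, and your proposal contains none of them:
\begin{itemize}
\item Deriving it from \textbf{(S5)} by Fubini would require $\int_0^t\int_0^\infty x\,\alpha(x)\f(\tau,x)\,dx\,d\tau<\infty$. This is a moment of order $1+\lambda$ that neither Definition~\ref{def: solution} nor Theorem~\ref{theorem: well posedness} provides.
\item You could show that $L^\infty$ is a weak-$*$ core for $\F^*$: that $\langle\varphi,h\rangle=\langle\F^*\varphi,w\rangle$ for all bounded $\varphi$ forces $w\in\mathrm{dom}(\F)$ and $\F w=h$. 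This is a maximality property of the fragmentation generator, not a formal consequence of what you have.
\item You could work with truncated semigroups. Their adjoints do preserve $L^\infty$, but they leave a remainder supported on $\{x>n\}$, and showing it vanishes needs a genuine tail estimate on the fragmentation flux.
\end{itemize}

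The paper avoids all of this by truncating $\alpha$, $K$ and $f_{\mathrm{in}}$. For the bounded operators $\F_n$ the Stewart and mild formulations coincide trivially. The truncated mild solutions converge strongly to the mild solution by Trotter--Kato and Gr\"onwall. The truncated Stewart solutions converge weakly to the Stewart solution by Theorem~\ref{thm: weak dependence on controls} (via Remark~\ref{remark: existence}) and uniqueness. Neither $\mathrm{dom}(\F^*)$ nor any higher moment is ever needed.
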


\begin{proof}
The existence and uniqueness of a local mild solution on a maximal interval $[0,t^\star)$ is standard, since $\F$ generates a $C_0$-semigroup, $\control\in L^\infty (0,T)$ and
the coagulation operator is locally Lipschitz on $X_{0,1}$, see, for example, \cite[Theorem 4.3.3]{cazenave1998semilinear}. Let $\hat\f$ be such a solution. We show that it coincides with the solution in the sense of Definition \ref{def: solution} over $[0,t^\star)$. Since, by Theorem \ref{theorem: well posedness}, $\f$ exists on the whole interval $[0,T]$ and has bounded $X_{0,1}$ norm there, this will imply that $t^\star=T$.

\smallskip
\noindent\textbf{Truncated problems.}
For every $n\in\N$, we consider the truncated kernels
\begin{equation}\label{eqn: truncated kernels}
    \alpha_n(x)\coloneqq \mathds{1}_{[0,n]}(x)\alpha(x),
    \qquad
    K_n(x,y)\coloneqq
    \begin{cases}
        K(x,y), & \text{if }x+y\le n,\\[2pt]
        0, & \text{otherwise}.
    \end{cases}
\end{equation}
We denote by $\F_n$ and $\C_n$ the corresponding fragmentation and coagulation operators. We also introduce the truncated initial condition $f_{\mathrm{in}}^n\coloneqq \mathds{1}_{[0,n]}f_{\mathrm{in}}$. Since $\alpha_n$ is bounded and the expected number of fragments is finite by \eqref{eqn:finite number of fragments}, it is straightforward to check that $\F_n\in \mathcal L(X_{0,1})$.
We denote by $(\T_n(t))_{t\ge 0}$ the $C_0$-semigroup generated by $\F_n$.

Let $\f_n$ be the solution in $[0,T]$ in the sense of Definition \ref{def: solution} of the truncated coagulation-fragmentation equation
\begin{equation}\label{eqn: truncated cf equation}
\begin{cases}
    \partial_t \f_n(t)=\F_n\f_n(t)+\control(t)\,\C_n\f_n(t),\\
    \f_n(0)=f_{\mathrm{in}}^n,
\end{cases}
\end{equation}
which exists and is unique by Remark \ref{remark: existence}. On the other hand the mild solution $\hat \f _n$ only exists on the maximal interval $[0,t_n^\star)$. Since $\F_n$ is bounded and $\C_n$ is continuous on $X_{0,1}$, the mild and integral
formulations are equivalent for \eqref{eqn: truncated cf equation}; see
\cite[Chapter~I, Section~3]{engel2000one}. In particular, for every
$t\in [0,t_n^\star)$,
\begin{equation}\label{eqn: truncated duhamel formulation}
    \hat\f_n(t)
    =
    \T_n(t)f_{\mathrm{in}}^n
    +
    \int_0^t \T_n(t-\tau)\,\control(\tau)\,\C_n\hat\f_n(\tau)\,d\tau.
\end{equation}
Given the boundedness of $\F_n$, this is equivalent to the integral formulation
\begin{equation}\label{eqn: truncated integral formulation}
    \hat\f_n(t)
    =
    f_{\mathrm{in}}^n
    +
    \int_0^t \bigl[\F_n\hat\f_n(\tau)+\control(\tau)\C_n\hat\f_n(\tau)\bigr]\,d\tau,
    \qquad t\in[0,t_n^\star).
\end{equation}
Applying the representation theorem for Bochner integrals in $L$-spaces
(see \cite[Theorem 2.39]{banasiak2006perturbations}), the integral identity
\eqref{eqn: truncated integral formulation} admits an almost-everywhere pointwise
representative, which is precisely the pointwise integral formulation \textbf{(S5)}. Moreover, this formulation implies absolute continuity in time for almost every $x$, and hence \textbf{(S3)} holds. Continuity, together with the fact that the kernels $K_n$ and $\alpha_n$ are bounded and compactly supported, also ensures the integrability conditions \textbf{(S4)}. Nonnegativity of mild solutions of coagulation-fragmentation equations is standard; see, for instance, \cite[Theorem 8.1.1]{banasiak2019analytic}. Therefore, by uniqueness of solutions, $\hat \f_n$ coincides with $\f _n$ on $[0,t_n^\star)$. Furthermore, the same a priori estimates as in Theorem~\ref{theorem: well posedness},
applied to the truncated equation, prevent $\norm{\hat \f_n (t)}_{0,1}$ from blowing up as $t\to t_n^\star$. Therefore, $\hat \f_n$ is globally defined on $[0,T]$ and there coincides with $\f_n$.

\smallskip
\noindent\textbf{Convergence of the truncated semigroups.}
Let $f\in \mathrm{dom}(\F)$. Since $\alpha_n(x)\to \alpha(x)$ pointwise, $\alpha_n(x)\le \alpha(x)$ and the expected number of fragments is finite, a straightforward application of the dominated convergence theorem shows that
\begin{equation*}
    \norm{\F_n f-\F f}_{0,1}\xrightarrow[n\to\infty]{}0.
\end{equation*}
Moreover, the semigroups $(\T_n(t))_{t\geq 0}$ satisfy a uniform exponential bound on $X_{0,1}$. Indeed, let $\mathbf g_n(t)=\T_n(t)g_{in}$ with $ g_{in}\ge 0$. By mass conservation of fragmentation \eqref{eqn:local conservation of mass}, $M_1(\mathbf g_n(t))$ is constant.
On the other hand, since $\alpha_n(y)\leq \alpha_0 y^\lambda$ with $\lambda\in[0,1)$, by \eqref{eqn:finite number of fragments}, we have
\begin{equation*}
        \frac{d}{dt}M_0(\mathbf g_n(t))
    =
    \int_0^\infty \bigl(N_\nu-1\bigr)\alpha_n(y) \mathbf g_n(t,y)\,dy \leq
    (N_\nu-1)\alpha_0 \|\mathbf g_n(t)\|_{0,1}
\end{equation*}
It follows from Gronwall's lemma that
\begin{equation*}
        \|\mathbf g_n(t)\|_{0,1}\le e^{Ct}\|g_{in}\|_{0,1},
    \qquad t\geq 0.
\end{equation*}
By positivity of $\T_n(t)$, the same estimate extends to arbitrary $g_{in}\in X_{0,1}$, and therefore $\T_n(t)$ are uniformly exponentially bounded.
Since also $(\T_\F(t))_{t\geq 0}$ is exponentially bounded on $X_{0,1}$ and $\mathrm{dom}(F)$ is a core for $F$ the Trotter-Kato approximation theorem \cite[Theorem III.4.8]{engel2000one} applies and yields
\begin{equation}\label{eqn: semigroup convergence}
    \sup_{t\in[0,T]}\norm{\T_n(t)g-\T_\F(t)g}_{0,1}\xrightarrow[n\to\infty]{}0
    \qquad \forall g\in X_{0,1}.
\end{equation}

\smallskip
\noindent\textbf{Convergence of the truncated mild solutions.} We claim that $\f_n$ converges to $\hat\f$ in $C([0,T^\star],X_{0,1})$
for every $0<T^\star<t^\star$. By the standard moment estimates for the truncated equation, there exists $R_{T^\star}$, independent of $n$, such that
\begin{equation*}
        \sup_{n\in\N}\sup_{t\in[0,T^\star]}\norm{\f_n(t)}_{0,1}
    +
    \sup_{t\in[0,T^\star]}\norm{\hat \f(t)}_{0,1}
    \leq R_{T^\star}.
\end{equation*}
We know from Proposition \ref{propo: coagulation operator} that the coagulation operators are locally Lipschitz on $X_{0,1}$, with constants independent of $n$. Hence there exists $L_{T^\star}$ such that 
\begin{equation*}
    \norm{\C_n f-\C_n g}_{0,1}\le L_{T^\star} \norm{f-g}_{0,1}
\end{equation*}
for every $n\in\N$ and every $f,g\in X_{0,1}$ with $\norm{f}_{0,1},\norm{g}_{0,1}\le R_{T^\star}$.

Subtracting the mild formulations satisfied by $\f_n$ and $\hat \f$, taking norms and using the uniform semigroup bound and the local Lipschitz estimate for $\C_n$ , we obtain
\begin{align*}
    \norm{\f_n(t)-\hat \f(t)}_{0,1}
    \le
    \varepsilon_n(T^\star)
    +
    e^{CT^\star}u_{\max}L_{T^\star}
    \int_0^t \norm{\f_n(\tau)-\hat \f(\tau)}_{0,1}\,d\tau,
\end{align*}
where
\begin{align*}
    \varepsilon_n(T^\star)
    \coloneqq
    &\sup_{t\in[0,T^\star]}\norm{\T_n(t)f_{\mathrm{in}}^n-\T_\F(t)f_{\mathrm{in}}}_{0,1}\\
    &+
    \sup_{t\in[0,T^\star]}
    \int_0^t
    \norm{\bigl(\T_n(t-\tau)-\T_\F(t-\tau)\bigr)\control(\tau)\C\hat \f(\tau)}_{0,1}\,d\tau\\
    &+
    \sup_{t\in[0,T^\star]}
    \int_0^t
    \norm{\T_n(t-\tau)\control(\tau)\bigl(\C_n\hat \f(\tau)-\C\hat \f(\tau)\bigr)}_{0,1}\,d\tau.
\end{align*}
It is easy to see that $\varepsilon_n(T^\star)\to 0$. Indeed, for the first term, it is enough to use the uniform semigroup bound, \eqref{eqn: semigroup convergence}, and the fact that $f_{\mathrm{in}}^n\to f_{\mathrm{in}}$ in $X_{0,1}$. The second term tends to zero by \eqref{eqn: semigroup convergence} and dominated convergence. Finally, since $K_n(x,y)\to K(x,y)$ pointwise and $|K_n(x,y)|\leq |K(x,y)|$, for every fixed $\tau\in[0,T^\star]$ one has
\begin{equation*}
        \norm{\C_n\hat \f(\tau)-\C\hat \f(\tau)}_{0,1}\xrightarrow[n\to\infty]{}0.
\end{equation*}
Moreover, since $K$ is bounded, easy computations show that
\begin{equation*}
    \|\C_n\hat \f(\tau)-\C\hat \f(\tau)\|_{0,1}
\le
\|\C_n\hat \f(\tau)\|_{0,1}+\|\C\hat \f(\tau)\|_{0,1}
\le
4\|K\|_\infty \|\hat \f(\tau)\|_{0,1}^2.
\end{equation*}
Since $\hat \f$ is bounded in $C([0,T^\star],X_{0,1})$, the integrand is dominated by an $L^1(0,T^\star)$ function independent of $n$.
Hence, dominated convergence yields the desired convergence. Gronwall's lemma now yields
\begin{equation*}
        \sup_{t\in[0,T^\star]}\norm{\f_n(t)-\hat \f(t)}_{0,1}\xrightarrow[n\to\infty]{}0.
\end{equation*}

\smallskip
\noindent\textbf{Conclusion.}
By Remark \ref{remark: existence} and Theorem \ref{thm: weak dependence on controls}, the truncated solutions $\f_n$ converge to $\f$ in $C\bigl([0,T^\star],X_{0,w}\bigr)$. On the other hand, the previous step shows that
\begin{equation*}
        \f_n \xrightarrow[n\to\infty]{} \hat \f
    \qquad\text{in }C\bigl([0,T^\star],X_{0,1}\bigr),
\end{equation*}
hence also in $C\bigl([0,T^\star],X_{0,w}\bigr)$. Therefore, given the arbitrariness of $T^\star$ we deduce that $\f=\hat \f$ on $[0,t^\star)$, as desired.
\end{proof}

A first consequence of the mild representation is the following Lipschitz dependence result.

\begin{proposition}[Lipschitz dependence on the control]\label{propo: Lipschitz dependence}
Let $\control_1,\control_2\in\U$. Then there exists a constant $\mathbf{L}_{cf}>0$, depending only on
$T$, $\alpha$, $b$, $K$, $u_{\max}$, such that
\begin{equation*}
    \sup_{t\in[0,T]}\norm{\f_{\control_2}(t)-\f_{\control_1}(t)}_{0,1}
    \;\leq\; \mathbf{L}_{cf}\,\norm{\control_2-\control_1}_{2}.
\end{equation*}
\end{proposition}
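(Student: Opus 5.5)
The plan is to work entirely with the mild formulation provided by Proposition~\ref{prop: stewart implies mild}, since both $\f_{\control_1}$ and $\f_{\control_2}$ coincide with their mild solutions. Subtracting the two Duhamel representations, the initial-data terms cancel and we obtain, for $t\in[0,T]$,
\begin{equation*}
    \f_{\control_2}(t)-\f_{\control_1}(t)
    =\int_0^t \T_\F(t-\tau)\Bigl[\control_2(\tau)\bigl(\C\f_{\control_2}(\tau)-\C\f_{\control_1}(\tau)\bigr)
    +\bigl(\control_2(\tau)-\control_1(\tau)\bigr)\C\f_{\control_1}(\tau)\Bigr]\,d\tau .
\end{equation*}
The key a priori ingredient is the uniform bound \eqref{eqn: M_0 estimate} of Theorem~\ref{theorem: well posedness}: for every $\control\in\U$ and $t\in[0,T]$ we have $\norm{\f_\control(t)}_{0,1}\le R\coloneqq\norm{f_{\mathrm{in}}}_{0,1}\mathbf C_{0,1}e^{\mathbf C_{0,1}T}$. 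This constant is independent of the control, because $\mathbf C_{0,1}$ depends only on $\alpha,b$. (If one prefers, this can be double-checked against the proof of Lemma~\ref{lemma: compactness}, where the control drops out through the sign of $\int_0^1\C f\,dx$.)

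With this bound in hand I would estimate the two terms separately, using $\norm{\T_\F(s)}\le M_\F e^{|\omega_\F|T}$ for $s\in[0,T]$. For the first term, Proposition~\ref{propo: coagulation operator} gives local Lipschitz continuity on the ball of radius $R$, namely $\norm{\C\f_{\control_2}-\C\f_{\control_1}}_{0,1}\le 4R\norm{K}_\infty\norm{\f_{\control_2}-\f_{\control_1}}_{0,1}$, and the control factor is bounded by $u_{\max}$. For the second term I would use $\norm{\C f}_{0,1}\le 2\norm{K}_\infty\norm{f}_{0,1}^2$; this is the same computation that underlies the bound $\norm{D\C[f]}\le 4\norm{K}_\infty\norm f_{0,1}$, since $\C f=\tfrac12 D\C[f]f$. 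The second term is then bounded by $2\norm{K}_\infty R^2\int_0^t|\control_2-\control_1|\,d\tau\le 2\norm{K}_\infty R^2\sqrt T\,\norm{\control_2-\control_1}_2$ by Cauchy--Schwarz.

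Setting $e(t)=\norm{\f_{\control_2}(t)-\f_{\control_1}(t)}_{0,1}$, these two estimates give
\begin{equation*}
    e(t)\le M_\F e^{|\omega_\F|T}\Bigl(2\norm{K}_\infty R^2\sqrt T\,\norm{\control_2-\control_1}_2+4u_{\max}R\norm{K}_\infty\int_0^t e(\tau)\,d\tau\Bigr).
\end{equation*}
Measurability and boundedness of $e$ follow from continuity of mild solutions in $X_{0,1}$. Gronwall's lemma then gives the claim with
\begin{equation*}
    \mathbf L_{cf}=2M_\F e^{|\omega_\F|T}\norm{K}_\infty R^2\sqrt T\exp\bigl(4M_\F e^{|\omega_\F|T}u_{\max}R\norm{K}_\infty T\bigr),
\end{equation*}
which depends only on the stated quantities (and on the fixed $f_{\mathrm{in}}$).

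The only delicate point is that the Lipschitz constant of $\C$ must be uniform in the control. This is exactly why I rely on the control-independent bound \eqref{eqn: M_0 estimate} rather than on the local mild-solution theory. Everything else is a routine Gronwall argument, so I expect no real obstacle beyond verifying that bound and the quadratic estimate for $\norm{\C f}_{0,1}$ in the weighted norm. That estimate uses $(1+x+y)\le(1+x)(1+y)$ for the gain term, after changing variables.
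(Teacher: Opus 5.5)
Your proof is correct and follows essentially the same route as the paper. You subtract the two Duhamel formulas and use the control-independent bound \eqref{eqn: M_0 estimate} to get a uniform bound on $\norm{\C\f_{\control_i}}_{0,1}$ and a uniform local Lipschitz constant for $\C$, then conclude with Gr\"onwall and Cauchy--Schwarz. The differences are only cosmetic: you attach the control difference to $\C\f_{\control_1}$ rather than $\C\f_{\control_2}$, you cite Proposition~\ref{prop: stewart implies mild} explicitly to justify the mild formulation, and you track the constants.
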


\begin{proof}
We notice that since $K$ is bounded, the bound in Theorem \ref{theorem: well posedness} ensures that the maps
$t\mapsto \C \f_{\control_i}$ from $[0,T]$ to $X_{0,1}$ are well defined and integrable. Therefore,  given that $\F$ generates a $C_0$ semigroup in $X_{0,1}$ and by the Duhamel formula, both solutions satisfy the mild formulation
\begin{equation*}
    \f_{\control_i}(t)
    =\mathcal T_{\F}(t)f_{\mathrm{in}}
    +\int_0^t\mathcal T_{\F}(t-\tau)\,\control_i(\tau)\,\C \f_{\control_i}(\tau)\,d\tau,
\qquad i=1,2.
\end{equation*}
Set $\mathbf{w}:=\f_{\control_2}-\f_{\control_1}$. Subtracting the two identities gives
\begin{equation*}
    \mathbf{w}(t)=\int_0^t\mathcal T_{\F}(t-\tau)
    \Big((\control_2-\control_1)(\tau)\,\C \f_{\control_2}(\tau)
     +\control_1(\tau)\big(\C \f_{\control_2}(\tau)-\C \f_{\control_1}(\tau)\big)\Big)\,d\tau.
\end{equation*}
Theorem \ref{theorem: well posedness} yields a uniform bound for $\norm{\C \f_{\control_i}(\tau)}_{0,1}$. Using the semigroup bound, and the local Lipschitz estimate for $\C$ from Proposition~\ref{propo: coagulation operator}, we then obtain a constant $C>0$ for which
\begin{equation*}
    \norm{\mathbf{w}(t)}_{0,1}
    \;\leq\; C\int_0^t\lvert\control_2-\control_1\rvert(\tau)\,d\tau
      + C\int_0^t\norm{\mathbf{w}(\tau)}_{0,1}\,d\tau.
\end{equation*}
Grönwall's lemma yields
\begin{equation*}
    \sup_{t\in[0,T]}\norm{\mathbf{w}(t)}_{0,1}
    \;\leq\; C_T\int_0^T \lvert\control_2-\control_1\rvert(\tau)\,d\tau,
\end{equation*}
for a constant $C_T>0$ depending only on $T$, $u_{\max}$ and the kernels. Finally, Cauchy–Schwarz gives the desired inequality.
\end{proof}

We now turn our attention to the linearised dynamics. Since $\control(\cdot)D\mathcal C[\f_\control(\cdot)]$ can be seen as a bounded operator-valued perturbation of the fragmentation operator $\F$, standard semigroup arguments (e.g., the Dyson--Phillips expansion, see \cite{pazy2012semigroups}) yield the following well-posedness result for the linearised equation.

\begin{proposition}[Linearised equation]\label{propo: linearized system}
For $\control\in\U$, consider the linear Cauchy problem
\begin{equation}\label{eqn: linearized system}
\begin{cases}
\partial_t v(t)=\mathcal F\, v(t) + \control(t)\,D\mathcal C[\f_\control(t)]\,v(t) + g(t),\\[2pt]
v(s)=\hat v,
\end{cases}
\end{equation}
with $s\in[0,T]$, $\hat v\in X_{0,1}$, and $g\in L^1(0,T;X_{0,1})$.
Then \eqref{eqn: linearized system} is well-posed. More precisely, there exists
a unique strongly continuous evolution family
\begin{equation*}
    \Phi_\control \colon \{(t,s): 0\leq s\leq t\leq T\}\to \mathcal{L}(X_{0,1})
\end{equation*}
characterised by the integral equation
\begin{equation}\label{eqn:Phi-forward}
\Phi_\control(t,s)
=
\mathcal T_{\F}(t-s)
+
\int_s^t \mathcal T_{\F}(t-\tau)\,
\control(\tau)\,D\C[\f_\control(\tau)]\,\Phi_\control(\tau,s)\,\mathrm{d}\tau.
\end{equation}
Moreover, for every $\hat v\in X_{0,1}$, the map
$t\mapsto \Phi_\control(t,s)\hat v$ is the unique mild solution of the
homogeneous problem \eqref{eqn: linearized system} corresponding to $g\equiv 0$.
For general $g\in L^1(0,T;X_{0,1})$, the unique mild solution is given by the
variation-of-constants formula
\begin{equation*}
    v(t)=\Phi_\control(t,s)\hat v + \int_s^t \Phi_\control(t,\tau)\,g(\tau)\,\mathrm{d}\tau.
\end{equation*}

Moreover, there exist constants
$\hat M>0$, $\hat\omega\in\R$, independent of $\control\in\U$, such that
\begin{equation}\label{eqn: growth bound evolution family}
\|\Phi_\control(t,s)\|\leq \hat M e^{\hat\omega(t-s)}
\qquad\forall\,0\leq s\le t\leq T.
\end{equation}
\end{proposition}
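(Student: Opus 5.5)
The plan is to treat $B(\tau)\coloneqq \control(\tau)\,D\C[\f_\control(\tau)]$ as a bounded, operator-valued perturbation of the generator $\F$ and to construct $\Phi_\control$ by a Picard (Dyson--Phillips) iteration on the integral equation \eqref{eqn:Phi-forward}, working in the strong operator topology.

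\textbf{Step 1: uniform bound on $B$.} By Theorem~\ref{theorem: well posedness}, $\sup_{\tau\in[0,T]}\norm{\f_\control(\tau)}_{0,1}\le R\coloneqq\norm{f_{\mathrm{in}}}_{0,1}\mathbf C_{0,1}e^{\mathbf C_{0,1}T}$, and $R$ does not depend on $\control$. Proposition~\ref{propo: coagulation operator} then gives
\[
\norm{B(\tau)}\le \beta\coloneqq 4u_{\max}\|K\|_\infty R .
\]
Next I need measurability of $\tau\mapsto B(\tau)h$ for every fixed $h$. By Proposition~\ref{prop: stewart implies mild}, $\f_\control$ is the mild solution, so $\f_\control\in C([0,T],X_{0,1})$. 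Since $f\mapsto D\C[f]$ is linear and bounded into $\mathcal L(X_{0,1})$, the map $\tau\mapsto D\C[\f_\control(\tau)]$ is norm-continuous. Because $\control\in L^\infty(0,T)$, the map $\tau\mapsto B(\tau)$ is strongly measurable and essentially bounded.

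\textbf{Step 2: construction and uniqueness of $\Phi_\control$.} Set $\Phi_0(t,s)\coloneqq\T_\F(t-s)$ and
\[
\Phi_{k+1}(t,s)h\coloneqq\int_s^t\T_\F(t-\tau)B(\tau)\Phi_k(\tau,s)h\,d\tau .
\]
By induction, each $\Phi_k(\cdot,s)h$ is continuous and
\[
\norm{\Phi_k(t,s)}\le M_\F e^{\omega_\F(t-s)}\frac{(M_\F\beta(t-s))^k}{k!}.
\]
The series $\sum_k\Phi_k$ therefore converges uniformly on the triangle $\{0\le s\le t\le T\}$, and its sum satisfies \eqref{eqn:Phi-forward}. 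Summing the bounds gives \eqref{eqn: growth bound evolution family} with $\hat M=M_\F$ and $\hat\omega=\omega_\F+M_\F\beta$, and these constants are independent of $\control$. Uniqueness follows by taking the difference of two solutions of \eqref{eqn:Phi-forward} and applying Gronwall's lemma to $e^{-\omega_\F(t-s)}\norm{(\Phi-\Psi)(t,s)h}$.

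\textbf{Step 3: joint strong continuity and the evolution property.} This is the main obstacle. Continuity in $t$ for fixed $s$ follows from the formula. For continuity in $s$, I would compare $\Phi(t,s)h$ and $\Phi(t,s')h$ using strong continuity of $\T_\F$ together with the uniform bounds and Gronwall's lemma. For the evolution property $\Phi(t,r)\Phi(r,s)=\Phi(t,s)$, I would check that both sides satisfy the same Volterra equation in $t\ge r$ with the same datum at $t=r$, and then invoke uniqueness. The Volterra equation for the left-hand side is obtained by splitting the integral in \eqref{eqn:Phi-forward} at $r$ and applying $\T_\F(t-r)$ to the identity for $\Phi(r,s)$.

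\textbf{Step 4: the problem \eqref{eqn: linearized system}.} A mild solution is by definition a continuous $v$ with
\[
v(t)=\T_\F(t-s)\hat v+\int_s^t\T_\F(t-\tau)\bigl[B(\tau)v(\tau)+g(\tau)\bigr]d\tau .
\]
Uniqueness follows from Gronwall's lemma, since $B$ is bounded. For existence, I plug in $v(t)=\Phi(t,s)\hat v+\int_s^t\Phi(t,\sigma)g(\sigma)\,d\sigma$ and use \eqref{eqn:Phi-forward} for each $\Phi(\cdot,\sigma)$. Fubini's theorem for Bochner integrals, justified by the uniform bounds and $g\in L^1(0,T;X_{0,1})$, rearranges the double integral to show that $v$ satisfies the identity above. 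Taking $g\equiv0$ gives the claim about the homogeneous problem.
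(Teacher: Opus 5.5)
Your proposal is correct and takes essentially the same approach as the paper. The paper gives no detailed proof: it only observes that $\control(\cdot)\,D\C[\f_\control(\cdot)]$ is a uniformly bounded operator-valued perturbation of $\F$ and cites the Dyson--Phillips expansion (Pazy), and your Steps 1--4 supply exactly those omitted standard details, including the $\control$-independent constants $\hat M=M_\F$ and $\hat\omega=\omega_\F+M_\F\beta$ obtained from the bound in Theorem~\ref{theorem: well posedness}.
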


Having established well-posedness of the linearised equation, standard semilinear arguments show that it indeed describes the first-order variation of the controlled dynamics around a reference trajectory.

\begin{proposition}\label{propo: linearisation}
Let
\begin{equation*}
    \control^\varepsilon = \control^\star + \varepsilon \delta \control,
    \qquad
    f_{\mathrm{in}}^\varepsilon = f_{\mathrm{in}} + \varepsilon \hat v_0,
\end{equation*}
and assume that $\control^\varepsilon\in \U$ and $f_{\mathrm{in}}^\varepsilon\geq 0$ for all $\varepsilon$ sufficiently small. Let further $\f^\varepsilon$ be the corresponding solution of \eqref{eqn: cf_equation}. Then
\begin{equation*}
    \f^\varepsilon = \f_{\control^\star}+\varepsilon \delta \f + o(\varepsilon)
    \qquad \text{in } C([0,T],X_{0,1}),
\end{equation*}
where $\delta \f$ is the unique mild solution of
\begin{equation}\label{eqn :lin-inhom}
\partial_t v(t)=\F v(t) + \control^\star(t)\,D\C[\f_{\control^\star}(t)]\,v(t)
 + \delta \control(t)\,\C \f_{\control^\star}(t),
 \qquad
 \delta \f(0)=\hat v_0.
\end{equation}
\end{proposition}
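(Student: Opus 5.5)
We work entirely in the mild formulation, which is legitimate by Proposition~\ref{prop: stewart implies mild}. Write $\f^\star\coloneqq\f_{\control^\star}$ and
\begin{equation*}
    r^\varepsilon(t)\coloneqq \f^\varepsilon(t)-\f^\star(t)-\varepsilon\,\delta\f(t).
\end{equation*}
The goal is to prove $\sup_{t\in[0,T]}\norm{r^\varepsilon(t)}_{0,1}=o(\varepsilon)$. The argument has three steps: an $O(\varepsilon)$ bound on the trajectory perturbation, a decomposition of the nonlinearity into a linear part and a Taylor remainder, and a Gr\"onwall argument driven by the evolution family $\Phi_{\control^\star}$.

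\textbf{Step 1: the trajectory perturbation is $O(\varepsilon)$.} Theorem~\ref{theorem: well posedness} gives the a priori bound $\sup_t\norm{\f^\varepsilon(t)}_{0,1}\le C$, uniformly for small $\varepsilon$, because $\norm{f_{\mathrm{in}}^\varepsilon}_{0,1}$ is bounded. Subtract the mild formulations of $\f^\varepsilon$ and $\f^\star$. Then use the semigroup bound of Proposition~\ref{propo:fragmentation semigroup}, the local Lipschitz estimate of Proposition~\ref{propo: coagulation operator}, and Gr\"onwall's lemma, exactly as in the proof of Proposition~\ref{propo: Lipschitz dependence}. This time the initial datum differs by $\varepsilon\hat v_0$, and the result is
\begin{equation*}
    \sup_t\norm{\f^\varepsilon(t)-\f^\star(t)}_{0,1}\le C\varepsilon\bigl(\norm{\hat v_0}_{0,1}+\norm{\delta\control}_2\bigr).
\end{equation*}

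\textbf{Step 2: decomposition of the nonlinearity.} Write
\begin{align*}
    \control^\varepsilon\C\f^\varepsilon-\control^\star\C\f^\star
    &=\control^\star D\C[\f^\star](\f^\varepsilon-\f^\star)+\varepsilon\,\delta\control\,\C\f^\star\\
    &\quad+\control^\star R(\f^\varepsilon,\f^\star)+\varepsilon\,\delta\control\bigl(\C\f^\varepsilon-\C\f^\star\bigr),
\end{align*}
where $R(f,g)\coloneqq\C f-\C g-D\C[g](f-g)$. Since $\C$ is a continuous quadratic form when $K$ is bounded,
\begin{equation*}
    R(f,g)=\mathcal B(f-g,f-g),
\end{equation*}
with $\mathcal B$ the symmetric bilinear form associated with $\C$. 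Hence $\norm{R(f,g)}_{0,1}\le 4\norm{K}_\infty\norm{f-g}_{0,1}^2$.

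Step 1 then shows that the remainder term is $O(\varepsilon^2)$ uniformly in $t$. For the last term we use the Lipschitz bound on $\C$ and again Step 1: it is bounded by $\varepsilon\lvert\delta\control(t)\rvert\cdot C\varepsilon$. Its time integral is therefore $O(\varepsilon^2)\norm{\delta\control}_{1}$, and $\norm{\delta\control}_1\le\sqrt{T}\,\norm{\delta\control}_2$. In particular this step needs only $\delta\control\in L^2$, not $L^\infty$.

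\textbf{Step 3: closing with the evolution family.} Subtract the mild formulation of $\delta\f$, taken with respect to $\T_\F$ and including the $D\C$ term as a perturbation. This shows that $r^\varepsilon$ is the mild solution of
\begin{equation*}
    \partial_t r=\F r+\control^\star D\C[\f^\star]\,r+h^\varepsilon,\qquad r(0)=0,
\end{equation*}
where $\int_0^T\norm{h^\varepsilon(\tau)}_{0,1}\,d\tau=O(\varepsilon^2)$. By Proposition~\ref{propo: linearized system},
\begin{equation*}
    r^\varepsilon(t)=\int_0^t\Phi_{\control^\star}(t,\tau)\,h^\varepsilon(\tau)\,d\tau .
\end{equation*}
The uniform bound \eqref{eqn: growth bound evolution family} then gives $\sup_t\norm{r^\varepsilon(t)}_{0,1}\le\hat M e^{\hat\omega T}\,O(\varepsilon^2)=o(\varepsilon)$. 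Alternatively, one can estimate the difference of mild formulations directly and apply Gr\"onwall with the bound $\norm{D\C[\f^\star]}\le4\norm{K}_\infty C$.

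\textbf{Main obstacle.} The delicate point is the identification in Step 3. One has to check that the difference of three mild (Duhamel) representations built on $\T_\F$ really is a mild solution of the linear inhomogeneous problem \eqref{eqn :lin-inhom}-type equation. This needs the integrability $h^\varepsilon\in L^1(0,T;X_{0,1})$, which requires Bochner measurability of $\tau\mapsto\C\f^\varepsilon(\tau)$. That follows from the strong continuity of mild solutions in $X_{0,1}$ (Proposition~\ref{prop: stewart implies mild}) and the continuity of $\C$. With this in hand, everything else is bilinear bookkeeping.
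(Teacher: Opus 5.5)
Your proof is correct, but it organises the expansion differently from the paper.

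\textbf{The paper's route.} The paper never estimates $\f^\varepsilon-\f^\star$ on its own. It compares $\f^\varepsilon$ directly with the candidate $\mathbf g^\varepsilon\coloneqq\f^\star+\varepsilon\delta\f$ and writes the forcing for $\rho^\varepsilon=\f^\varepsilon-\mathbf g^\varepsilon$ as $\control^\varepsilon(\C\f^\varepsilon-\C\mathbf g^\varepsilon)+\Gamma^\varepsilon$. The first piece is controlled by the local Lipschitz bound $L\|\rho^\varepsilon\|_{0,1}$. The piece $\Gamma^\varepsilon$ is controlled by the Fr\'echet remainder of $\C$ at $\f^\star$ in the \emph{known} direction $\varepsilon\delta\f$. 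A Gr\"onwall argument on the $\T_\F$-Duhamel identity then gives $o(\varepsilon)$.

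\textbf{Your route.} You expand $\C$ at $\f^\star$ along the \emph{unknown} displacement $\f^\varepsilon-\f^\star$. This forces you to prove the $O(\varepsilon)$ bound of Step 1 first, a small extension of Proposition~\ref{propo: Lipschitz dependence} to perturbed initial data. In exchange, the exact identity $R(f,g)=\C(f-g)$ gives an $O(\varepsilon^2)$ forcing. You therefore get the sharper rate $\sup_t\|r^\varepsilon(t)\|_{0,1}=O(\varepsilon^2)$, with $\control^\star D\C[\f^\star]$ rather than a Lipschitz constant as the linear part.

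\textbf{Comparison.} The paper's route is more economical and uses only Fr\'echet differentiability plus local Lipschitz continuity of $\C$, so it would survive for non-quadratic nonlinearities. Yours exploits the bilinear structure to get a quantitative estimate.

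\textbf{Two minor remarks.} First, the ``main obstacle'' you flag is not really one. Once the three Duhamel identities hold (Proposition~\ref{prop: stewart implies mild} for $\f^\varepsilon,\f^\star$, and the definition of mild solution for $\delta\f$), subtracting them gives $r^\varepsilon(t)=\int_0^t\T_\F(t-\tau)\bigl[\control^\star D\C[\f^\star]r^\varepsilon+h^\varepsilon\bigr]\,d\tau$. Together with $\|D\C[\f^\star]\|\le 4\|K\|_\infty\sup_t\|\f^\star(t)\|_{0,1}$, this already suffices for a direct Gr\"onwall argument without invoking $\Phi_{\control^\star}$; this is what the paper does. Second, your observation that only $\delta\control\in L^2(0,T)$ is needed is moot. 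Admissibility of $\control^\star+\varepsilon_0\delta\control$ for some $\varepsilon_0>0$ gives $|\varepsilon_0\delta\control|\le u_{\max}-u_{\min}$ a.e., so $\delta\control\in L^\infty(0,T)$ automatically.
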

\begin{proof}
Set $\f^\star\coloneqq \f_{\control^\star}$ and define $\mathbf g^\varepsilon \coloneqq \f^\star+\varepsilon \delta\f$ and $    \rho^\varepsilon \coloneqq \f^\varepsilon-\mathbf g^\varepsilon$. By the mild formulations for $\f^\varepsilon$, $\f^\star$, and $\delta\f$, one obtains for every $t\in[0,T]$
\begin{equation*}
    \rho^\varepsilon(t)
    =
    \int_0^t \T_\F(t-s)
    \Bigl[
        \control^\varepsilon(s)\bigl(\C\f^\varepsilon(s)-\C\mathbf g^\varepsilon(s)\bigr)
        +\Gamma^\varepsilon(s)
    \Bigr]\,ds,
\end{equation*}
where
\begin{align*}
    \Gamma^\varepsilon(s)
    \coloneqq{}&
    \control^\star(s)\Bigl(
        \C\mathbf g^\varepsilon(s)-\C\f^\star(s)
        -\varepsilon D\C[\f^\star(s)]\,\delta\f(s)
    \Bigr)
    +\varepsilon \delta\control(s)
    \Bigl(
        \C\mathbf g^\varepsilon(s)-\C\f^\star(s)
    \Bigr).
\end{align*}

Since $\f^\star,\delta\f\in C([0,T],X_{0,1})$, the family $\bigl\{\mathbf g^\varepsilon(t):\, t\in[0,T],\ |\varepsilon|\le \varepsilon_0\bigr\}$
is bounded in $X_{0,1}$ for $\varepsilon_0>0$ small enough. By the continuous Fréchet differentiability of $\C$ on $X_{0,1}$, there exists a remainder $r^\varepsilon\in C([0,T],X_{0,1})$ such that
\begin{equation*}
    \C\mathbf g^\varepsilon
    =
    \C\f^\star
    +\varepsilon D\C[\f^\star]\,\delta\f
    +r^\varepsilon,
    \qquad
    \|r^\varepsilon\|_{C([0,T],X_{0,1})}=o(\varepsilon).
\end{equation*}
Moreover, since $\C$ is locally Lipschitz on bounded sets, there exists $L>0$ such that
\begin{equation*}
    \|\C\f^\varepsilon(t)-\C\mathbf g^\varepsilon(t)\|_{X_{0,1}}
    \le
    L\,\|\rho^\varepsilon(t)\|_{X_{0,1}}
    \qquad \forall t\in[0,T].
\end{equation*}
Using also that
\begin{equation*}
    \|\C\mathbf g^\varepsilon-\C\f^\star\|_{C([0,T],X_{0,1})}=O(\varepsilon),
\end{equation*}
we infer from the definition of $\Gamma^\varepsilon$ that
\begin{equation*}
    \|\Gamma^\varepsilon\|_{C([0,T],X_{0,1})}=o(\varepsilon).
\end{equation*}
Therefore, recalling the exponential bound on $\T_\F$ and the uniform boundedness of the controls $\control^\varepsilon$, there exists $C>0$ such that
\begin{equation*}
    \|\rho^\varepsilon(t)\|_{X_{0,1}}
    \le
    C\int_0^t \|\rho^\varepsilon(s)\|_{X_{0,1}}\,ds
    +
    C\int_0^t \|\Gamma^\varepsilon(s)\|_{X_{0,1}}\,ds.
\end{equation*}
By Gr\"onwall's lemma,
\begin{equation*}
    \|\rho^\varepsilon\|_{C([0,T],X_{0,1})}
    \le
    C_T\,\|\Gamma^\varepsilon\|_{C([0,T],X_{0,1})}
    =
    o(\varepsilon),
\end{equation*}
which ends the proof.
\end{proof}

\subsection{The adjoint equation}

We now turn to the backward adjoint equation. In light of
Remark~\ref{remark: adjoint fragmentation} and the nonreflexivity of
$X_{0,1}$, the adjoint fragmentation semigroup $\mathcal{T}_{\mathcal
F}^{*}$ is only weak-$*$ continuous; accordingly, the adjoint Cauchy
problem is formulated in the weak-$*$ sense.

\begin{proposition}\label{propo: adjoint Cauchy problem}
For every $\hat{\varphi}\in X_{\infty,1}$ and $\control\in\U$, the backward adjoint
problem
\begin{equation}\label{eqn: adjoint Cauchy}
\begin{cases}
\partial_t \varphi(t)= -\mathcal F^{*}\varphi(t)-\control(t)\,D\mathcal C[\f_\control(t)]^{*}\varphi(t),\\[2pt]
\varphi(T)=\hat \varphi,
\end{cases}
\end{equation}
is well-posed in $X_{\infty,1}$. There exists a unique mild weak-$*$ continuous
$\bfphi$ such that, for all
$f\in X_{0,1}$ and $s\in[0,T]$,
\begin{equation}\label{eqn: mild weak * solution}
  \langle \bfphi(s), f\rangle
  = \langle \mathcal T^*_{\mathcal F}(T-s)\hat{\varphi}, f\rangle
  +\int_{s}^{T} \big\langle \mathcal T^*_{\F}(\tau-s)\control(\tau)\,
   D\mathcal C[\f_\control(\tau)]^{*}\,\bfphi(\tau),\, f\big\rangle\,\mathrm{d}\tau .
\end{equation}
Moreover, $\bfphi(s)=\Phi^*_\control(s,T)\hat{\varphi}$, where
$\Phi^*_\control(s,t)\coloneqq \Phi_\control(t,s)^{*}$ is the adjoint evolution family,
and there exist $\hat M>0$, $\hat\omega\in\R$, independent of $\control\in\U$, such
that
\begin{equation}\label{eq:adjoint-bound}
  \norm{\Phi^*_\control(s,t)}\;\leq\; \hat M\,\mathrm e^{\hat\omega\,(t-s)}
  \qquad (0\leq s\leq t\le T).
\end{equation}
Solutions depend Lipschitz continuously on the control and on the terminal condition. More precisely, for every $R>0$ there exists a constant $\mathbf{L}_{\Phi,R}>0$ such that
\begin{equation}\label{eq:adjoint-lip}
  \sup_{t\in[0,T]}\|\bfphi_2(t)-\bfphi_1(t)\|_{\infty,1}
  \leq \mathbf{L}_{\Phi,R}\Bigl(\norm{\control_2 - \control_1}_2 + \norm{\hat \varphi_2-\hat \varphi_1}_{\infty,1}\Bigr)
\end{equation}
for every $\control_1,\control_2\in\U$ and every $\hat \varphi_1,\hat \varphi_2 \in X_{\infty,1}$ such that $\|\hat\varphi_1\|_{\infty,1},\|\hat\varphi_2\|_{\infty,1}\leq R$, where for $i=1,2$, $\bfphi_i$ denotes the solution of \eqref{eqn: adjoint Cauchy} associated with control $\control_i$ and terminal condition $\hat \varphi_i$.
\end{proposition}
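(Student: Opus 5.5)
I would build the adjoint solution directly from the forward evolution family of Proposition~\ref{propo: linearized system}. Define $\Phi^*_\control(s,t)\coloneqq \Phi_\control(t,s)^*\in\mathcal L(X_{\infty,1})$ and set $\bfphi(s)\coloneqq \Phi^*_\control(s,T)\hat\varphi$. The bound \eqref{eq:adjoint-bound} is then immediate from \eqref{eqn: growth bound evolution family}, since $\|A^*\|=\|A\|$. Weak-$*$ continuity of $s\mapsto\bfphi(s)$ follows from $\langle \bfphi(s),f\rangle=\langle\hat\varphi,\Phi_\control(T,s)f\rangle$ once we know that $s\mapsto \Phi_\control(T,s)f$ is norm continuous in $X_{0,1}$. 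This holds because $\Phi_\control$ is a strongly continuous evolution family in both variables.

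To obtain the mild weak-$*$ identity \eqref{eqn: mild weak * solution}, I would use the backward version of the Dyson--Phillips equation,
\begin{equation*}
\Phi_\control(T,s)=\T_\F(T-s)+\int_s^T \Phi_\control(T,\tau)\,\control(\tau)\,D\C[\f_\control(\tau)]\,\T_\F(\tau-s)\,d\tau .
\end{equation*}
Its right-hand side solves the same Volterra-type equation as \eqref{eqn:Phi-forward}. Uniqueness follows from the standard identity $\Phi(t,s)=\Phi(t,\tau)\Phi(\tau,s)$ together with a Gronwall argument, using the uniform bound $\|D\C[\f_\control(\tau)]\|\le 4\|K\|_\infty \mathbf C_{0,1}$ from Proposition~\ref{propo: coagulation operator} and Theorem~\ref{theorem: well posedness}.

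Pairing with $\hat\varphi$ and $f$ and moving every operator onto $\hat\varphi$ by duality gives exactly \eqref{eqn: mild weak * solution}. Here $\Phi^*_\control(\tau,T)\hat\varphi=\bfphi(\tau)$, and the integrand $\tau\mapsto\langle \bfphi(\tau), \control(\tau) D\C[\f_\control(\tau)]\T_\F(\tau-s)f\rangle$ is measurable and bounded. Measurability follows from strong continuity of $\T_\F$, the continuity of $\f_\control$ in $X_{0,1}$ (which holds by Proposition~\ref{prop: stewart implies mild}, since mild solutions are norm continuous), and weak-$*$ continuity of $\bfphi$. Only scalar integrals appear, so no Bochner integrability in the nonseparable space $X_{\infty,1}$ is needed.

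Uniqueness of the weak-$*$ mild solution goes as follows. Take the difference $\boldsymbol\theta$ of two solutions with the same terminal datum. Then $\langle\boldsymbol\theta(s),f\rangle=\int_s^T\langle \boldsymbol\theta(\tau),\control(\tau) D\C[\f_\control(\tau)]\T_\F(\tau-s)f\rangle d\tau$. Taking the supremum over $\|f\|_{0,1}\le1$ gives $\|\boldsymbol\theta(s)\|_{\infty,1}\le C\int_s^T\|\boldsymbol\theta(\tau)\|_{\infty,1}d\tau$, and Gronwall yields $\boldsymbol\theta=0$. For the norm to be finite and measurable, note that $\|\boldsymbol\theta(\tau)\|_{\infty,1}$ is weak-$*$ lower semicontinuous in $\tau$ and bounded by the uniform boundedness principle. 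This is the subtle point, and I would handle it by working with $\sup_{\tau\ge s}$ instead of an integral if necessary.

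For the Lipschitz estimate \eqref{eq:adjoint-lip}, I would first compare the forward families. Writing $A_i(\tau)\coloneqq\control_i(\tau)D\C[\f_{\control_i}(\tau)]$, the perturbation identity gives
\begin{equation*}
\Phi_{\control_2}(t,s)-\Phi_{\control_1}(t,s)=\int_s^t\Phi_{\control_2}(t,\tau)\bigl(A_2(\tau)-A_1(\tau)\bigr)\Phi_{\control_1}(\tau,s)\,d\tau.
\end{equation*}
By linearity of $f\mapsto D\C[f]$ and Proposition~\ref{propo: Lipschitz dependence},
\begin{equation*}
\|A_2(\tau)-A_1(\tau)\|\le 4\|K\|_\infty\bigl(|\control_2-\control_1|(\tau)\,\mathbf C+u_{\max}\mathbf L_{cf}\|\control_2-\control_1\|_2\bigr).
\end{equation*}
Together with the uniform growth bounds, Cauchy--Schwarz then gives $\|\Phi_{\control_2}(t,s)-\Phi_{\control_1}(t,s)\|\le C\|\control_2-\control_1\|_2$. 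Taking adjoints preserves norms. Finally,
\begin{equation*}
\bfphi_2-\bfphi_1=(\Phi^*_{\control_2}-\Phi^*_{\control_1})(\cdot,T)\hat\varphi_2+\Phi^*_{\control_1}(\cdot,T)(\hat\varphi_2-\hat\varphi_1),
\end{equation*}
and using $\|\hat\varphi_2\|_{\infty,1}\le R$ gives the estimate with $\mathbf L_{\Phi,R}$. The perturbation identity itself I would verify by applying both sides to $f$ and checking that both satisfy the same integral equation, as in the uniqueness step above.

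I expect the main obstacle to be the rigorous justification of the backward integral equation and its uniqueness, given the strong-continuity and measurability subtleties of $\Phi_\control$ in the variable $s$. Everything else reduces to Gronwall estimates.
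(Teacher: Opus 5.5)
Your proposal is correct, and most of it follows the paper's proof; the one genuinely different step is the Lipschitz estimate. Like the paper, you define $\bfphi(s)=\Phi^*_\control(s,T)\hat\varphi$ and get \eqref{eq:adjoint-bound} from $\|A^*\|=\|A\|$. You obtain weak-$*$ continuity from the continuity of $s\mapsto\Phi_\control(T,s)f$, and you derive the mild identity by pairing the backward Dyson--Phillips identity with $\hat\varphi$ and $f$. You are more careful than the paper on two points it passes over. First, the paper uses the backward identity while citing only the forward equation \eqref{eqn:Phi-forward}. Second, it applies Gr\"onwall to $s\mapsto\|\bfphi_2(s)-\bfphi_1(s)\|_{\infty,1}$ without discussing measurability; your lower-semicontinuity and uniform-boundedness remark settles that. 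For the first point you do not need the propagator identity $\Phi(t,s)=\Phi(t,\tau)\Phi(\tau,s)$. Substitute the backward right-hand side into \eqref{eqn:Phi-forward} and exchange the order of integration, using \eqref{eqn:Phi-forward} for $\Phi_\control(t,\tau)$. This shows it solves the forward Volterra equation, whose solution is unique by plain Gr\"onwall.

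For the Lipschitz estimate, the paper subtracts the two adjoint mild identities in $X_{\infty,1}$, estimates the cross term $\int_s^T\|B_2-B_1\|\,\|\bfphi_1\|\,d\tau$, and closes with a backward Gr\"onwall. Since that computation uses only the mild identity, the same estimate with equal data gives uniqueness for free. You instead compare the forward families in $X_{0,1}$ and pass to the adjoints through $\|A^*\|=\|A\|$. Your perturbation identity is simply the variation-of-constants formula of Proposition~\ref{propo: linearized system} for the control $\control_2$, with $g(\tau)=(A_2-A_1)(\tau)\Phi_{\control_1}(\tau,s)f$. Alternatively, you can bypass it by running Gr\"onwall on the difference of the two forward equations. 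Your route keeps every integral a strong integral in the separable space $X_{0,1}$ and avoids weak-$*$ integrals and norm-measurability questions in the nonseparable dual. It also gives the stronger operator-norm bound $\sup_{s\le t}\|\Phi^*_{\control_2}(s,t)-\Phi^*_{\control_1}(s,t)\|\le C\|\control_2-\control_1\|_2$. The price is that uniqueness needs the separate Gr\"onwall argument, which you already supply.
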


\begin{proof}
For brevity, set
\begin{equation*}
    B_\control(t)\coloneqq \control(t)\,D\mathcal C[\f_\control(t)]
\in \mathcal L(X_{0,1}), \qquad t\in[0,T].
\end{equation*}
Since by Theorem~\ref{theorem: well posedness} $\f_\control$ is uniformly bounded in $X_{0,1}$, and
$f\mapsto D\mathcal C[f]$ is linear and bounded from $X_{0,1}$ to
$\mathcal L(X_{0,1})$, there exists a constant $C_B>0$, independent of
$\control\in\U$, such that
\begin{equation}\label{eq:uniform-B-bound}
\|B_\control(t)\|_{\mathcal L(X_{0,1})}\le C_B
\qquad\text{for a.e. }t\in[0,T].
\end{equation}

\smallskip

\textbf{Evolution family and bound.}
Since $\Phi_\control$ is a strongly continuous evolution family on $X_{0,1}$,
its adjoints $\Phi_\control^*(s,t)\coloneqq \Phi_\control(t,s)^*$ form an evolution family on $X_{\infty,1}=X_{0,1}^*$.
For every $\hat\varphi\in X_{\infty,1}$ and $f\in X_{0,1}$, the map
\begin{equation*}
    s\longmapsto \langle \Phi_\control^*(s,T)\hat\varphi,f\rangle
=
\langle \hat\varphi,\Phi_\control(T,s)f\rangle
\end{equation*}
is continuous because $s\mapsto \Phi_\control(T,s)f$ is continuous in $X_{0,1}$.
Hence $\Phi_\control^*(\cdot,T)\hat\varphi$ is weak-$*$ continuous.
Also,
\begin{equation*}
    \|\Phi_\control^*(s,t)\|
=
\|\Phi_\control(t,s)^*\|
=
\|\Phi_\control(t,s)\|
\le \hat M\,e^{\hat\omega (t-s)},
\end{equation*}
which proves \eqref{eq:adjoint-bound}.

\smallskip

\textbf{Existence and mild formula.}
Define $\bfphi(s)\coloneqq \Phi_\control^*(s,T)\hat\varphi$ for $s\in[0,T]$. Then $\bfphi$ is weak-$*$ continuous by the previous step.
To derive the mild formula, pairing the integral equation \eqref{eqn:Phi-forward} against $f\in X_{0,1}$ gives
\begin{align*}
\langle \bfphi(s),f\rangle
=
\langle \hat\varphi,\Phi_\control(T,s)f\rangle
=
\langle \hat\varphi,\mathcal T_{\mathcal F}(T-s)f\rangle
+\int_s^T
\big\langle \hat\varphi,\Phi_\control(T,\tau)\,B_\control(\tau)\,
\mathcal T_{\mathcal F}(\tau-s)f\big\rangle\,\mathrm d\tau.
\end{align*}
The integrand is absolutely integrable in $\tau$: indeed, by
\eqref{eq:adjoint-bound}, \eqref{eq:uniform-B-bound}, and the semigroup bound for
$\mathcal T_{\mathcal F}$,
\begin{equation*}
    \big|
\langle \hat\varphi,\Phi_\control(T,\tau)\,B_\control(\tau)\,
\mathcal T_{\mathcal F}(\tau-s)f\rangle
\big|
\le
\|\hat\varphi\|_{\infty,1}\,\hat M e^{\hat\omega (T-\tau)}\,
C_B\,M_{\mathcal F}e^{\omega_{\mathcal F}(\tau-s)}\,\|f\|_{0,1},
\end{equation*}
and the right-hand side belongs to $L^1(s,T)$.
Using the adjoint relation twice, we obtain
\begin{align*}
\big\langle \hat\varphi,\Phi_\control(T,\tau)\,B_\control(\tau)\,
\mathcal T_{\mathcal F}(\tau-s)f\big\rangle=
\big\langle \Phi_\control^*(\tau,T)\hat\varphi,\,
B_\control(\tau)\,\mathcal T_{\mathcal F}(\tau-s)f\big\rangle=
\big\langle \mathcal T_{\mathcal F}^*(\tau-s)\,B_\control(\tau)^*\,\bfphi(\tau),\,f\big\rangle.
\end{align*}
Therefore
\begin{equation*}
      \langle \bfphi(s), f\rangle
  = \langle \mathcal T^*_{\mathcal F}(T-s)\hat{\varphi}, f\rangle
  +\int_{s}^{T} \big\langle \mathcal T^*_{\mathcal F}(\tau-s)\control(\tau)\,
   D\mathcal C[\f_\control(\tau)]^{*}\,\bfphi(\tau),\, f\big\rangle\,\mathrm{d}\tau .
\end{equation*}

\smallskip

\textbf{Lipschitz dependence on the control and the terminal condition.}
Fix $R>0$, let $\control_1,\control_2\in\U$, and let
$\hat\varphi_1,\hat\varphi_2\in X_{\infty,1}$ satisfy
$\|\hat\varphi_1\|_{\infty,1},\|\hat\varphi_2\|_{\infty,1}\le R$.
For $i=1,2$, let
\begin{equation*}
    B_i(t)\coloneqq \control_i(t)\,D\mathcal C[\f_{\control_i}(t)],
\qquad
\bfphi_i(t)\coloneqq \Phi_{\control_i}^*(t,T)\hat\varphi_i.
\end{equation*}
Subtracting the two mild identities yields, for every $s\in[0,T]$,
\begin{align*}
\bfphi_2(s)-\bfphi_1(s)
=&
\mathcal T_{\mathcal F}^*(T-s)(\hat\varphi_2-\hat\varphi_1) \\
+&\int_s^T \mathcal T_{\mathcal F}^*(\tau-s)\,(B_2(\tau)-B_1(\tau))^*\,\bfphi_1(\tau)\,\mathrm d\tau 
+\int_s^T \mathcal T_{\mathcal F}^*(\tau-s)\,B_2(\tau)^*
\bigl(\bfphi_2(\tau)-\bfphi_1(\tau)\bigr)\,\mathrm d\tau .
\end{align*}
Taking the norm in $X_{\infty,1}$ and using
$\|\mathcal T_{\mathcal F}^*(t)\|\le M_{\mathcal F}e^{\omega_{\mathcal F}t}$, we get
\begin{align}
\|\bfphi_2(s)-\bfphi_1(s)\|_{\infty,1}
&\leq C
\|\hat\varphi_2-\hat\varphi_1\|_{\infty,1}
\notag\\
&
+ C\int_s^T \|B_2(\tau)-B_1(\tau)\|_{\mathcal L(X_{0,1})}\,
\|\bfphi_1(\tau)\|_{\infty,1}\,\mathrm d\tau
+ C\int_s^T \|\bfphi_2(\tau)-\bfphi_1(\tau)\|_{\infty,1}\,\mathrm d\tau
\label{eq:adjoint-pregronwall}
\end{align}
for some constant $C>0$ independent of $\control_1,\control_2$. Next, since $f\mapsto D\mathcal C[f]$ is linear and bounded,
\begin{align*}
\|B_2(\tau)-B_1(\tau)\|_{\mathcal L(X_{0,1})}
&\leq
C\Bigl(
|\control_2(\tau)-\control_1(\tau)|
+\|\f_{\control_2}(\tau)-\f_{\control_1}(\tau)\|_{0,1}
\Bigr)
\end{align*}
for another constant $C>0$.
Moreover, by \eqref{eq:adjoint-bound}, $\|\bfphi_1(\tau)\|_{\infty,1}
\leq \hat M e^{\hat\omega T}\|\hat\varphi_1\|_{\infty,1}
\leq \hat M e^{\hat\omega T}R$. Combining the previous two estimates, Hölder's inequality, and
Proposition~\ref{propo: Lipschitz dependence}, we infer that
\begin{align*}
\int_s^T \|B_2(\tau)-B_1(\tau)\|_{\mathcal L(X_{0,1})}\,\mathrm d\tau
&\le
C\int_0^T |\control_2(\tau)-\control_1(\tau)|\,\mathrm d\tau
+ C\int_0^T \|\f_{\control_2}(\tau)-\f_{\control_1}(\tau)\|_{0,1}\,\mathrm d\tau \\
&\le
C\,T^{1/2}\|\control_2-\control_1\|_2
+ C\,T\,\sup_{t\in[0,T]}
\|\f_{\control_2}(t)-\f_{\control_1}(t)\|_{0,1} \\
&\le
C_R\,\|\control_2-\control_1\|_2 .
\end{align*}
Inserting this into \eqref{eq:adjoint-pregronwall}, we obtain
\begin{equation*}
    \|\bfphi_2(s)-\bfphi_1(s)\|_{\infty,1}
\le
C_R\|\hat\varphi_2-\hat\varphi_1\|_{\infty,1}
+
C_R\|\control_2-\control_1\|_2
+
C_R\int_s^T \|\bfphi_2(\tau)-\bfphi_1(\tau)\|_{\infty,1}\,\mathrm d\tau.
\end{equation*}
A backward Grönwall argument yields
\begin{equation*}
    \sup_{t\in[0,T]}\|\bfphi_2(t)-\bfphi_1(t)\|_{\infty,1}
\le
\mathbf L_{\Phi,R}
\Bigl(
\|\control_2-\control_1\|_2
+
\|\hat\varphi_2-\hat\varphi_1\|_{\infty,1}
\Bigr),
\end{equation*}
for some constant $\mathbf L_{\Phi,R}>0$ depending only on $R$ and on the data.

Finally, uniqueness follows by applying the previous estimate with
$\control_1=\control_2$ and $\hat\varphi_1=\hat\varphi_2$.
\end{proof}

\subsection{Proof of Theorem~\ref{thrm: gradients}}

We now prove the gradient representation \eqref{eq:grad-explicit} and the
Lipschitz continuity of $\control \mapsto\nabla_u J(\control)$.

\begin{proof}[Proof of Theorem~\ref{thrm: gradients}]
The proof is divided into several steps.

\textbf{Gradient formula.}
Denote by $\f_{\control^\star}$ and $\f^\varepsilon$ the corresponding solutions of
\eqref{eqn: cf_equation}. By Proposition~\ref{propo: linearisation}, we can linearise the dynamics along $f_{\control^\star}$ and obtain
\begin{equation*}
    \f^\varepsilon = \f_{\control^\star}+\varepsilon \delta \f + o(\varepsilon)
\quad\text{in }C([0,T],X_{0,1}),
\end{equation*}
where
\begin{equation}\label{eqn: delta f VoC}
    \delta f(t)=\int_0^t \Phi_{\control^\star}(t,\tau)\, \delta \control(\tau)\,\mathcal{C}\f_{\control^\star}(\tau)\,\mathrm{d}\tau.
\end{equation}
By Fréchet differentiability of $\psi$ in $X_{0,1}$,
\begin{equation}\label{eqn: epsilon expansion of psi}
    \psi(\f^\varepsilon(T))=\psi(\f_{\control^\star}(T))
    +\varepsilon\,\big\langle D\psi(\f_{\control^\star}(T)),\,\delta \f(T)\big\rangle
    +o(\varepsilon).
\end{equation}
Using \eqref{eqn: delta f VoC}, the growth bound
\eqref{eqn: growth bound evolution family} and the boundedness of
$\mathcal C \f_{\control^\star}$, the integrand is Bochner integrable in $X_{0,1}$ and
Fubini–-Tonelli theorem yields
\begin{equation}\label{eqn: scalar product expansion}
    \begin{split}
    \big\langle D\psi(\f_{\control^\star}(T)),\,\delta \f(T)\big\rangle
    &=\int_0^T \delta \control(\tau)\,\Big\langle D\psi(\f_{\control^\star}(T)),\,
        \Phi_{\control^\star}(T,\tau)\,\mathcal C \f_{\control^\star}(\tau)\Big\rangle \mathrm{d}\tau\\
    &=\int_0^T \delta \control(\tau)\,\Big\langle \Phi_{\control^\star}^*(\tau,T)D\psi(\f_{\control^\star}(T)),\,
        \mathcal C \f_{\control^\star}(\tau)\Big\rangle \mathrm{d}\tau,
    \end{split}
\end{equation}
where we used duality in the second identity. Setting
\begin{equation*}
    \bfphi_{\control^\star}(\tau)\coloneqq\Phi_{\control^\star}^*(\tau,T)\,D\psi(\f_{\control^\star}(T)),
\end{equation*}
Proposition~\ref{propo: adjoint Cauchy problem} shows that $\bfphi_{\control^\star}$
is the unique weak-$*$ mild solution of \eqref{eqn: adjoint Cauchy
problem}. Inserting \eqref{eqn: scalar product expansion} into
\eqref{eqn: epsilon expansion of psi} and adding the contribution of the
running cost gives
\begin{equation*}
    \frac{\mathrm{d}}{\mathrm{d}\varepsilon}\Big|_{\varepsilon=0}J(\control^\varepsilon)
=\big\langle t\mapsto w(\control^\star(t)-1)+\langle\bfphi_{\control^\star}(t),\mathcal C \f_{\control^\star}(t)\rangle,\ \delta \control\big\rangle_2,
\end{equation*}
which yields the expression of $\nabla_u J(\control^\star)$ and
\eqref{eqn: directional derivative J}.

\smallskip
\textbf{Lipschitz continuity.}
Let, for $i=1,2$, $\bfphi_i$ be the solution of \eqref{eqn: adjoint Cauchy problem}
associated with the control $\control_i$ and terminal condition $\hat\varphi_i=D\psi\bigl(\f_{\control_i}(T)\bigr)$. Assume that $D\psi$ is Lipschitz on bounded subsets of $X_{0,1}$.
Since $\{\f_\control(T):\control\in\U\}$ is bounded in $X_{0,1}$, the set $\{D\psi(\f_\control(T)):\control\in\U\}$
is bounded in $X_{\infty,1}$, say by some $R>0$. Hence, by
Proposition~\ref{propo: adjoint Cauchy problem}, the family
$\{\bfphi_\control\}_{\control\in\U}$ is uniformly bounded in
$L^\infty(0,T;X_{\infty,1})$ and
\begin{equation*}
    \sup_{t\in[0,T]}\|\bfphi_{\control_2}(t)-\bfphi_{\control_1}(t)\|_{\infty,1}
\leq \mathbf L_{\Phi,R}\Bigl(
\|\control_2-\control_1\|_2
+\|D\psi(\f_{\control_2}(T))-D\psi(\f_{\control_1}(T))\|_{\infty,1}
\Bigr).
\end{equation*}
Since $D\psi$ is Lipschitz on the bounded set $\{\f_\control(T):\control\in\U\}$, and
Proposition~\ref{propo: Lipschitz dependence} gives
\begin{equation*}
    \|\f_{\control_2}(T)-\f_{\control_1}(T)\|_{0,1}\leq \mathbf{L}_{cf}\|\control_2-\control_1\|_2,
\end{equation*}
there exists $L>0$ such that
\begin{equation*}
    \sup_{t\in[0,T]}\|\bfphi_{\control_2}(t)-\bfphi_{\control_1}(t)\|_{\infty,1}
\le L\|\control_2-\control_1\|_2.
\end{equation*}

For $t\in[0,T]$ we estimate
\begin{equation*}
    \begin{aligned}
\big|\langle \bfphi_{\control_2}(t),\mathcal C \f_{\control_2}(t)\rangle
      -\langle \bfphi_{\control_1}(t),\mathcal C \f_{\control_1}(t)\rangle\big|
&\le \|\mathcal C \f_{\control_2}(t)\|_{0,1}\,
      \|\bfphi_{\control_2}(t)-\bfphi_{\control_1}(t)\|_{\infty,1}\\
&\quad + \|\bfphi_{\control_1}(t)\|_{\infty,1}\,
      \|\mathcal C \f_{\control_2}(t)-\mathcal C \f_{\control_1}(t)\|_{0,1}.
\end{aligned}
\end{equation*}
By \eqref{eqn: M_0 estimate} and Proposition~\ref{propo: coagulation operator},
there exists $C>0$ such that
\begin{equation*}
    \sup_{t\in[0,T]}\|\mathcal C \f_{\control_i}(t)\|_{0,1}\le C,
\qquad
\sup_{t\in[0,T]}\|\bfphi_{\control_i}(t)\|_{\infty,1}\le C,
\end{equation*}
and
\begin{equation*}
    \sup_{t\in[0,T]}
\|\mathcal C \f_{\control_2}(t)-\mathcal C \f_{\control_1}(t)\|_{0,1}
\le C\sup_{t\in[0,T]}\|\f_{\control_2}(t)-\f_{\control_1}(t)\|_{0,1}.
\end{equation*}
Using the previous estimate for $\bfphi_{\control_2}-\bfphi_{\control_1}$ and
Proposition~\ref{propo: Lipschitz dependence}, we deduce that there exists
$L>0$ such that
\begin{equation*}
    \sup_{t\in[0,T]}
\big|\langle \bfphi_{\control_2}(t),\mathcal C \f_{\control_2}(t)\rangle
     -\langle \bfphi_{\control_1}(t),\mathcal C \f_{\control_1}(t)\rangle\big|
\le L\|\control_2-\control_1\|_2.
\end{equation*}
Finally,
\begin{equation*}
    \|\nabla_u J(\control_2)-\nabla_u J(\control_1)\|_2
\le w\|\control_2-\control_1\|_2
+\big\|\langle \bfphi_{\control_2},\mathcal C \f_{\control_2}\rangle
      -\langle \bfphi_{\control_1},\mathcal C \f_{\control_1}\rangle\big\|_2
\le \mathbf L\|\control_2-\control_1\|_2,
\end{equation*}
for some $\mathbf L>0$, which proves the claimed Lipschitz continuity of
$\control\mapsto \nabla_u J(\control)$.
\end{proof}

\subsection{Proof of Theorem \ref{thm: PMP}}

We end this section with the proof of the Pontryagin minimum principle for \eqref{eqn: OCP}.

\begin{proof}
Let $\control^\star$ be locally optimal in $\U$. If $\control\in \U$, since $\U$ is convex we have that for every $\varepsilon\in[0,1]$
\begin{equation*}
    \control^\varepsilon \coloneqq \control^\star + \varepsilon (\control-\control^\star)\in \U.
\end{equation*}
By local optimality, $J\bigl(\control^\varepsilon\bigr)-J(\control^\star)\geq 0$
for all $\varepsilon$ sufficiently small. Theorem \ref{thrm: gradients} along the direction $\delta \control = \control-\control^\star$ yields
\begin{equation*}
    J\bigl(\control^\varepsilon\bigr)
    = J\bigl(\control^\star\bigr)
      +\varepsilon\bigl\langle\nabla_u J(\control^\star), \delta \control\bigr\rangle_2
      + o (\varepsilon),
\end{equation*}
dividing by $\varepsilon$ and taking the limit for $\varepsilon\to 0$ finally gives the variational inequality
\begin{equation}\label{eqn: VI}
    \langle \nabla_u J(\control^\star), \control -\control ^\star\rangle_2\geq 0
\end{equation}

Fix a Borel set $A\subset [0,T]$, $\omega\in [u_{\min}, u_{\max}]$. We introduce
the control
\begin{equation}\label{lemma: test variation}
    \control_{A, \omega}(t)\coloneqq 
    \begin{cases}
        \omega, & \text{ if }t\in  A,\\[2pt]
        \control^\star(t) &\text{otherwise}.
    \end{cases}
\end{equation}
By specialising \eqref{eqn: VI} at $\control=\control_{A, \omega}$ and recalling the expression of the gradient of $J$ at $\control^\star$ we obtain
\begin{equation*}
    \int_A \Bigl(w\bigl(\control^\star(t)-1)+\langle \bfphi^\star(t), \C\f^\star(t)\rangle\Bigr)(\omega-\control^\star(t))dt\geq 0
\end{equation*}
Therefore, for every fixed $\omega\in [u_{\min},u_{\max}]$,
\begin{equation*}
    \Bigl(
w(\control^\star(t)-1)
+\langle \bfphi^\star(t),\C\f^\star(t)\rangle
\Bigr)
(\omega-\control^\star(t))
\ge 0
\qquad\text{for a.e. }t\in[0,T].
\end{equation*}
By a standard density argument using $\mathbb{Q}\cap [u_{\min},u_{\max}]$ and the
continuity of $H$ in $\omega$, one may choose the exceptional null set
independently of $\omega$. Equivalently, for a.e.\ $t\in[0,T]$ and every $\omega\in [u_{\min},u_{\max}]$,
\begin{equation}\label{eq:pointwise-fo}
    \partial_u H\bigl(\f^\star(t),\bfphi^\star(t),\control^\star(t)\bigr)
    \,(\omega-\control^\star(t))\ge 0.
\end{equation}
Now, for a.e. $t$, the map
\begin{equation*}
    \omega \longmapsto H\bigl(\f^\star(t),\bfphi^\star(t),\omega\bigr)
\end{equation*}
is convex on $[u_{\min},u_{\max}]$, and
\eqref{eq:pointwise-fo} is precisely its first-order optimality condition at
$\omega=\control^\star(t)$. Hence
\begin{equation*}
    H\bigl(\f^\star(t), \bfphi^\star(t), \control^\star(t)\bigr)
\leq
H\bigl(\f^\star(t), \bfphi^\star(t), \omega\bigr)
\qquad
\forall\,\omega\in [u_{\min},u_{\max}]
\end{equation*}
for almost every $t\in[0,T]$, which is the desired minimum condition.

When $w>0$, the Hamiltonian is strictly convex in $\omega$ and the
unconstrained minimiser is
\begin{equation*}
    \control_0(t)
    = 1 - \tfrac{1}{w}\,\bigl\langle \bfphi^\star(t), \mathcal{C}\f^\star(t)\bigr\rangle.
\end{equation*}
Projecting this minimiser onto the admissible control set
$[u_{\min},u_{\max}]$ yields the explicit formula \eqref{eqn: feedback}
for almost every $t\in[0,T]$.
\end{proof}

\begin{remark}
A singular–arc analysis in the unregularised case $w=0$ is beyond the scope
of the present work.
\end{remark}

\section{Numerical experiments}\label{sec:numerics}

This section illustrates the optimal control problem~\eqref{eqn: OCP} on a single test case and confirms the qualitative behaviour predicted by our analysis. The focus is on a proof-of-concept rather than on numerical analysis or high–order discretisations. All simulations are implemented in \texttt{Python} and accelerated with JAX’s just-in-time compilation. The implementation is available at \url{https://github.com/enricosartor/coagulation-fragmentation-control}.

\subsection{Optimisation scheme}\label{subsec:scheme}

We consider the minimisation problem
\begin{equation}\label{eqn:minprob}
\min_{\control\in\U} J(\control)=\min_{\control\in\U}
\frac{w}{2}\int_{0}^{T}\bigl(\control(t)-1\bigr)^{2}\,dt+
\int _{x_{\min}}^{x_{\max}}\f_\control(T,x) dx
\end{equation}
subject to the coagulation–fragmentation dynamics~\eqref{eqn: cf_equation}. The goal is thus to minimise the number of particles whose sizes lie between $x_{\min}$ and $x_{\max}$. We recall that the admissible controls form the box 
\begin{equation*}
    \U=\bigl\{\control \in L^2(0,T) \colon \control(t)\in [u_{\min},u_{\max}] \text{ for a.e. } t\in(0,T)\bigr\}.
\end{equation*}
which is closed and convex in $L^2(0,T)$.

Given the gradient formula in Theorem~\ref{thrm: gradients}, we solve
\eqref{eqn:minprob} by projected gradient descent. This leads to the classical
forward–backward sweep:
\begin{enumerate}[label=(\roman*)]
\item integrate the state equation forward in time for a given control;
\item integrate the adjoint equation backward in time with terminal condition $\varphi(T)=D\psi(f(T))$;
\item update the control by a projected gradient step.
\end{enumerate}
We adopt an \emph{optimise–then–discretise} strategy: the continuous state and
adjoint are both evaluated on the same finite volume grid and time step (see
§\ref{subsec:discretisation}), and all pairings use the corresponding
quadrature. In particular, the implemented search direction is the
discretisation of the continuous gradient; we do not derive a separate discrete
adjoint.

For completeness, we record the adjoint equation in its \emph{formal strong
form}:
\begin{equation}\label{eqn: adjoint_equation}
    \begin{split}
        \partial_t\varphi(t,x) =&- \int_0^\infty \control(t) K(x, y) \f_\control(t, y)\, \varphi(t, x+y)\, dy\\
        &+\int_0^\infty \control(t) K(y,x) \f_\control(t, y)\, \varphi(t,y)\,dy
          + \varphi(t, x)\int _0^\infty \control (t) K(x,y) f_\control(t, y)\, dy\\
        &+\alpha(x)\varphi(t,x)-\int_0^x \alpha(x) b(y,x)\, \varphi(t, y)\, dy.
    \end{split}
\end{equation}
In the analysis, the adjoint variable is constructed only as a weak-$*$ solution as in Proposition~\ref{propo: adjoint Cauchy problem}; no pointwise meaning of \eqref{eqn: adjoint_equation} is assumed. The expression above should therefore
be understood as the formal representation of the adjoint operator, which is the one we discretise in the numerical scheme.

\subsubsection*{Projected gradient algorithm}

The projection $P_\U$ onto $\U$ acts pointwise as
\begin{equation*}
      P_\U(\control)(t)=P_{[u_{\min}, u_{\max}]}\bigl(\control(t)\bigr).
\end{equation*}
Stationary points of \eqref{eqn:minprob} are characterised by the fixed–point relation
\begin{equation*}
      \control^\star=P_{\U}\bigl(\control^\star-\tau\,\nabla _uJ(\control^\star)\bigr)
  \qquad\text{for any }\tau>0,
\end{equation*}
which motivates the \emph{gradient mapping}
\begin{equation*}
      G_{\eta}(\control)\coloneqq\frac{1}{\eta}\Bigl(\control-P_{\U}\bigl(\control-\eta\nabla_u J(\control)\bigr)\Bigr)
\end{equation*}
and the stopping criterion $\|G_{1}(u_k)\|_{2}\le\varepsilon$. The Lipschitz continuity of $\nabla _u J$ on bounded subsets of $\U$ (Theorem~\ref{thrm: gradients}) implies the standard descent estimate. In particular, projected gradient descent with Armijo backtracking produces a monotonically decreasing sequence $J(u_k)$ and satisfies $\|G_{\eta_k}(u_k)\|_2\to0$; see, e.g., \cite[Theorem 2.4]{hinze2008optimization}. Algorithm~\ref{alg: pgd} summarises one optimisation step.

\begin{remark}
The convergence result established above concerns the optimisation procedure defined at the continuous level, that is, using the gradient of the continuous reduced cost functional. By contrast, for the numerical implementation based on an \emph{optimise-then-discretise} strategy, no convergence result is proved here. In particular, we do not claim convergence of the resulting discrete optimisation algorithm to a local minimiser of the discretised problem.
\end{remark}

\begin{algorithm}[ht]
  \caption{Projected gradient descent}
  \label{alg: pgd}
  \begin{algorithmic}[1]
    \STATE Choose an initial admissible control $u_0$, parameters $\eta_0>0$, $\beta\in(0,1)$, $\sigma\in(0,1)$, and a tolerance $\varepsilon>0$.
    \FOR{$k=0,1,2,\dots$}
      \STATE \textbf{Forward pass:} Integrate \eqref{eqn: cf_equation} with control $u_k$ to obtain the state $f_k$.
      \STATE \textbf{Backward pass:} Set $\varphi_k(T)=D\psi\bigl(f_k(T)\bigr)$ and integrate \eqref{eqn: adjoint_equation} backward to obtain $\varphi_k$.
      \STATE \textbf{Gradient:} Compute, for each $t$,
      \begin{equation*}
        g_k(t)=w\bigl(u_k(t)-1\bigr)+\langle \varphi_k(t),\mathcal{C}f_k(t)\rangle_{\Delta x}.
      \end{equation*}
      \STATE \textbf{Stopping test:} If
      \begin{equation*}
        \bigl\|u_k-P_{\U}(u_k-g_k)\bigr\|_2=\|G_1(u_k)\|_2 \leq \varepsilon,
      \end{equation*}
      then terminate.
      \STATE \textbf{Line search:} Choose the largest $\eta_k=\eta_0\beta^n$ such that
      \begin{equation*}
        J\bigl(P_{\U}(u_k-\eta_k g_k)\bigr)
        \leq
        J(u_k)-\sigma\,\eta_k\,\|G_{\eta_k}(u_k)\|_2^2,
      \end{equation*}
      where
      \begin{equation*}
          G_{\eta_k}(u_k)=\frac{1}{\eta_k}\Bigl(u_k-P_{\U}(u_k-\eta_k g_k)\Bigr).
      \end{equation*}
      \STATE \textbf{Update:}
      \begin{equation*}
          u_{k+1}=P_{\U}(u_k-\eta_k g_k).
      \end{equation*}
    \ENDFOR
  \end{algorithmic}
\end{algorithm}

\subsection{Numerical discretisation}\label{subsec:discretisation}

Both state and adjoint equations are discretised on the same uniform finite–volume grid on $[0,25]$ with $N_x=800$ cells, spacing $\Delta x=25/N_x$, and cell centres $x_i=(i-\tfrac12)\Delta x$. Spatial inner products reuse the finite–volume quadrature,
\begin{equation*}
    \langle \varphi, f\rangle_{\Delta x}=\sum_{i=1}^{N_x} \varphi_i\,f_i\,\Delta x,
\end{equation*}
so that the discrete pairing mirrors the $X_{0,1}$–$X_{\infty,1}$ duality. Time stepping uses forward Euler with $\Delta t=0.005$ for both the forward and backward equations. We denote by $J_{\Delta t, \Delta x}$ the discrete counterpart of the cost $J$.

\begin{remark}In our numerical experiments we employ a straightforward finite–volume discretisation of the coagulation–fragmentation equation. This scheme does not enforce exact conservation of the total mass at the discrete level; we verified, however, that the resulting mass defect remains small on the grids used; in the test case of Section 5.3, the relative mass loss does not exceed $10^{-3}$. We emphasise that designing and analysing structure–preserving numerical methods is not the focus of the present work. Conservative schemes that preserve the behaviour of selected moments of the solution (for instance, the total number of particles and their total mass) are available in the literature, see, for example, the fixed–pivot technique in \cite{kumar1996solution}.
\end{remark}

\begin{remark}
The adjoint system in Theorem~\ref{thrm: gradients} is formulated in $X_{\infty,1}$, a weighted-$L^\infty$-type dual space. Because $X_{\infty,1}$ is the dual of the \emph{nonreflexive} weighted $L^1$ space $X_{0,1}$, the adjoint variables exist only in a mild weak-$*$ sense (see Proposition~\ref{propo: adjoint Cauchy problem}). Together with the nonseparability of $X_{\infty,1}$ and the fact that the domain of the adjoint fragmentation operator is only weak-$\ast$ dense, this raises delicate numerical issues. A full finite volume error analysis of the backward problem is therefore beyond the scope of this paper; our computations are intended merely to confirm the qualitative behaviour predicted by the theory.
\end{remark}

We validate the gradients on the fully discretised objective $J_{\Delta t,\Delta x}$ using a Taylor test.
Given a unit direction $d$ (normalised with respect to the discrete $L^2$ inner product in time), we consider
\begin{equation}\label{eq:taylor_residual}
E_{\Delta t,\Delta x}(\varepsilon)\coloneqq
\frac{\bigl|J_{\Delta t,\Delta x}(u+\varepsilon d)-J_{\Delta t,\Delta x}(u)
-\varepsilon\langle g(u),d\rangle_{\Delta t}\bigr|}{\varepsilon},
\qquad
\langle a,b\rangle_{\Delta t}\coloneqq \sum_{k} a_k b_k \Delta t.
\end{equation}
When $g=g^{\mathrm{disc}}_{\Delta t,\Delta x}(u)\coloneqq \nabla_u J_{\Delta t,\Delta x}(u)$ is computed by automatic
differentiation in JAX, the residual satisfies $E_{\Delta t,\Delta x}(\varepsilon)=\mathcal O(\varepsilon)$.
In contrast, when $g=g^{\mathrm{cont}}_{\Delta t,\Delta x}(u)$ is obtained by discretising the continuous adjoint system,
$E_{\Delta t,\Delta x}(\varepsilon)$ exhibits a plateau as $\varepsilon\downarrow 0$, reflecting the usual
optimise--then--discretise vs.\ discretise--then--optimise mismatch at fixed resolution.

To quantify this discrepancy we also compare the two gradients directly. Since the continuous adjoint yields a time-density,
we use the time-discretised quantity $\Delta t\, g^{\mathrm{cont}}_{\Delta t,\Delta x}(u)$ and report the relative error
\begin{equation}\label{eq:rho_grad}
\rho_{\Delta t,\Delta x}\coloneqq
\frac{\bigl\|\Delta t\, g^{\mathrm{cont}}_{\Delta t,\Delta x}(u)-g^{\mathrm{disc}}_{\Delta t,\Delta x}(u)\bigr\|_2}
{\bigl\|g^{\mathrm{disc}}_{\Delta t,\Delta x}(u)\bigr\|_2}.
\end{equation}
The plateau level of \eqref{eq:taylor_residual} and the mismatch \eqref{eq:rho_grad} decrease under time refinement on the
fixed spatial grid of the test case; see Table~\ref{tab:grad_validation}.

\begin{table}[t]
\centering
\caption{Gradient validation on the spatial grid used in the main test case.
We report the plateau level of the Taylor residual $E_{\Delta t, \Delta x}(\varepsilon)$ and the relative mismatch $\rho_{\Delta t, \Delta x}$ between 
the discrete gradient $\nabla_u J_{\Delta t, \Delta x}$ (JAX autodiff) and the time-discretised adjoint gradient.}
\label{tab:grad_validation}
\begin{tabular}{lcc}
\toprule
Time step & plateau of $E_{\Delta t, \Delta x}(\varepsilon)$ & $\rho_{\Delta t, \Delta x}$ \\
\midrule
$2\Delta t$   & $1.1\times 10^{-2}$ & $9.2\times 10^{-2}$ \\
$\Delta t$ & $6.0\times 10^{-3}$ & $6.5\times 10^{-2}$ \\
$\Delta t/2$ & $3.0\times 10^{-3}$ & $4.6\times 10^{-2}$ \\
\bottomrule
\end{tabular}
\end{table}

\subsection{Test case: Avoiding the formation of fine particles.}

We now specialise the cost functional~\eqref{eqn:minprob} to a standard
objective in aerosol engineering: reducing the concentration of fine and
ultrafine particles. In many applications (combustion, spray drying, atmospheric
chemistry), particles simultaneously \emph{coagulate} (grow by collisions) and
\emph{fragment} (break under shear or chemical reaction). Because sub–PM$_{2.5}$
material is tightly regulated for health and climate reasons, we choose a
terminal cost that penalises the number of particles of size $x\leq
x_{\max}\coloneqq 5.0$ at the final time $T=1$, and seek controls that make this
quantity as small as possible.

The kernels and initial data are
\begin{equation}\label{eq:kernels_C}
  K(x,y)=\frac{1}{20}(1+x)^{\frac{1}{4}}(1+y)^{\frac{1}{4}}\mathds{1}_{[0,25]}(x+y), 
  \qquad
  \alpha(x)=\frac{1}{5}\sqrt{x}, 
  \qquad
  b(x,y)=\frac{2}{y},
\end{equation}
\begin{equation}\label{eq:init_C}
  f_{\mathrm{in}}(x)=2\,\mathds{1}_{[0,25]}(x)\,
         \exp\bigl[-(x-7.5)^2/50\bigr].
\end{equation}
Coagulation is moderately size dependent and cut off beyond $x+y=25$ to avoid out-of-grid 
loss of mass, while fragmentation grows with size and produces on
average $N_0=2$ fragments per break–up (binary fragmentation).

\vspace{0.1cm}

\textbf{Algorithmic performance.}
We apply Algorithm~\ref{alg: pgd} with $u_0\equiv 1$ and tolerance
$\varepsilon=0.075$, using Armijo backtracking on the gradient mapping. We set $\eta_0=0.1$, $\beta=\tfrac12$, and $\sigma=10^{-4}$. For
the reference choice $w=1.0$ the method converges in $15$ iterations.

We monitor the loss $J(u_k)$ (Fig.~\ref{fig:loss}) together with three
residuals (Fig.~\ref{fig:residuals}): the projection residual
\begin{equation*}
    \|u_k - P_{\mathcal U}(u_k - g_k)\|_2,
\end{equation*}
and two Pontryagin-like residuals, namely the relative control error
\begin{equation*}
    r_k \coloneqq \frac{\norm{u_k - u_k^\star}_2}{\norm{u_k^\star}_2},
    \qquad
    u_k^\star(t)\coloneqq P_{[u_{\min},u_{\max}]}\left(
    1-\frac{1}{w}\,\bigl\langle\varphi_k(t), \mathcal C f_k(t)\bigr\rangle_{\Delta x}
    \right),
\end{equation*}
and the relative Hamiltonian error
\begin{equation*}
    s_k \coloneqq \frac{\norm{H(f_k,\varphi_k,u_k)-H_k^\star}_2}
    {\norm{H_k^\star}_2}, 
    \qquad 
    H_k^\star(t) \coloneqq \min_{\omega\in[u_{\min}, u_{\max}]} 
    H\bigl(f_k(t), \varphi_k(t), \omega\bigr).
\end{equation*}
As expected from the discussion in Section~\ref{subsec:scheme}, the projection
residual decreases monotonically. Moreover, the PMP residuals also attain very
small values, providing a posteriori evidence that the algorithm has converged
to a local minimiser.

\begin{figure}[ht]
  \centering
  \begin{subfigure}[t]{0.48\textwidth}
    \centering
    \includegraphics[width=\linewidth]{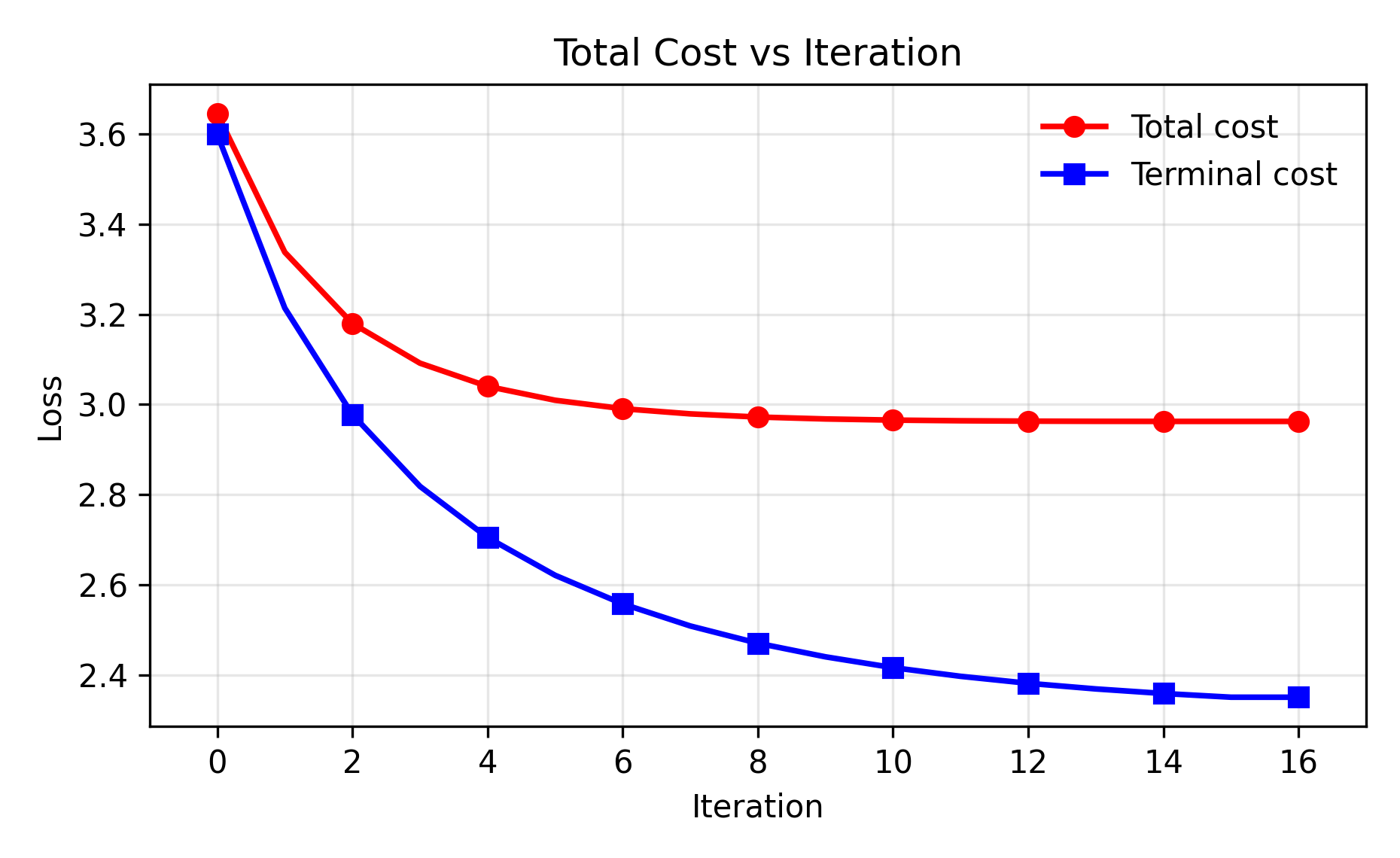}
    \caption{Loss}
    \label{fig:loss}
  \end{subfigure}\hfill
  \begin{subfigure}[t]{0.48\textwidth}
    \centering
    \includegraphics[width=\linewidth]{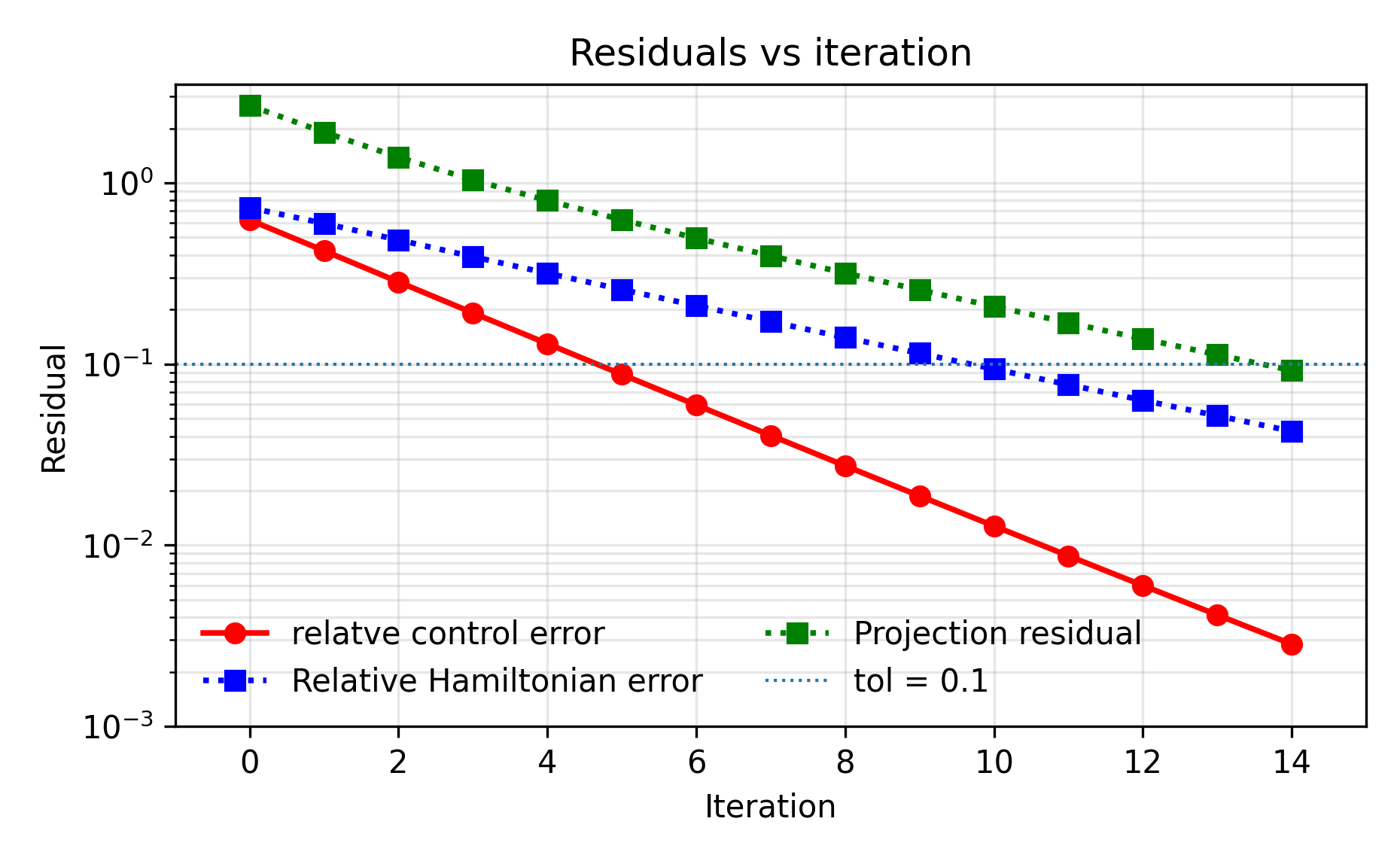}
    \caption{Residuals}
    \label{fig:residuals}
  \end{subfigure}
  \caption{Objective value and residuals along the iterations of
    Algorithm~\ref{alg: pgd}.}
  \label{fig:loss-residuals}
\end{figure}

\vspace{0.1cm}

\textbf{Effect of the regularisation weight.}
To illustrate the trade-off between control effort and terminal performance, we
repeat the experiment for several values of the weight $w$
in~\eqref{eqn:minprob}. Table~\ref{tab:weight-sweep} reports, for each $w$, the
optimal value $J(u^\star)$, the terminal cost (number of particles within $[0.0, 5.0]$), and
the number of iterations of Algorithm~\ref{alg: pgd}. As expected, the higher
the weight, the less aggressive the control is.

\begin{table}[ht]
  \centering
  \begin{tabular}{cccc}
    \toprule
    $w$ 
      & total cost
      & terminal cost
      & iterations \\
    \midrule
    $0.2$ & 2.003 & 1.572 & 34 \\
    $1.0$ & 2.9609 & 2.390 & 15 \\
    $5.0$ & 3.799 & 3.406 & 4 \\
    \midrule
    no control & 4.357 & 4.357 \\
    \bottomrule
  \end{tabular}
  \caption{Effect of the regularisation weight $w$ on the optimal cost and particles in the target region.}
  \label{tab:weight-sweep}
\end{table}

\vspace{0.1cm}

\textbf{Optimal control and terminal densities.}
Figure~\ref{fig:controls-densities} summarises the outcome for the different
values of the weight $w$. The locally optimal controls produced by the
projected-gradient algorithm (Figure~\ref{fig:controls_A}) significantly modify
the dynamics and consistently outperform the uncontrolled case. The right panel
(Figure~\ref{fig:density_A}) compares the initial distribution (black) with the
uncontrolled and controlled terminal distributions. The optimal controls
effectively reduce the number of particles inside the target region
$[0.0,5.0]$, in line with the modelling goal. In all cases, the control activity
focuses near the final time $T=1$, enhancing coagulation when it is most
effective for depleting the target window.

\begin{figure}[ht]
  \centering
  \begin{subfigure}[t]{0.48\textwidth}
    \centering
    \includegraphics[width=\linewidth]{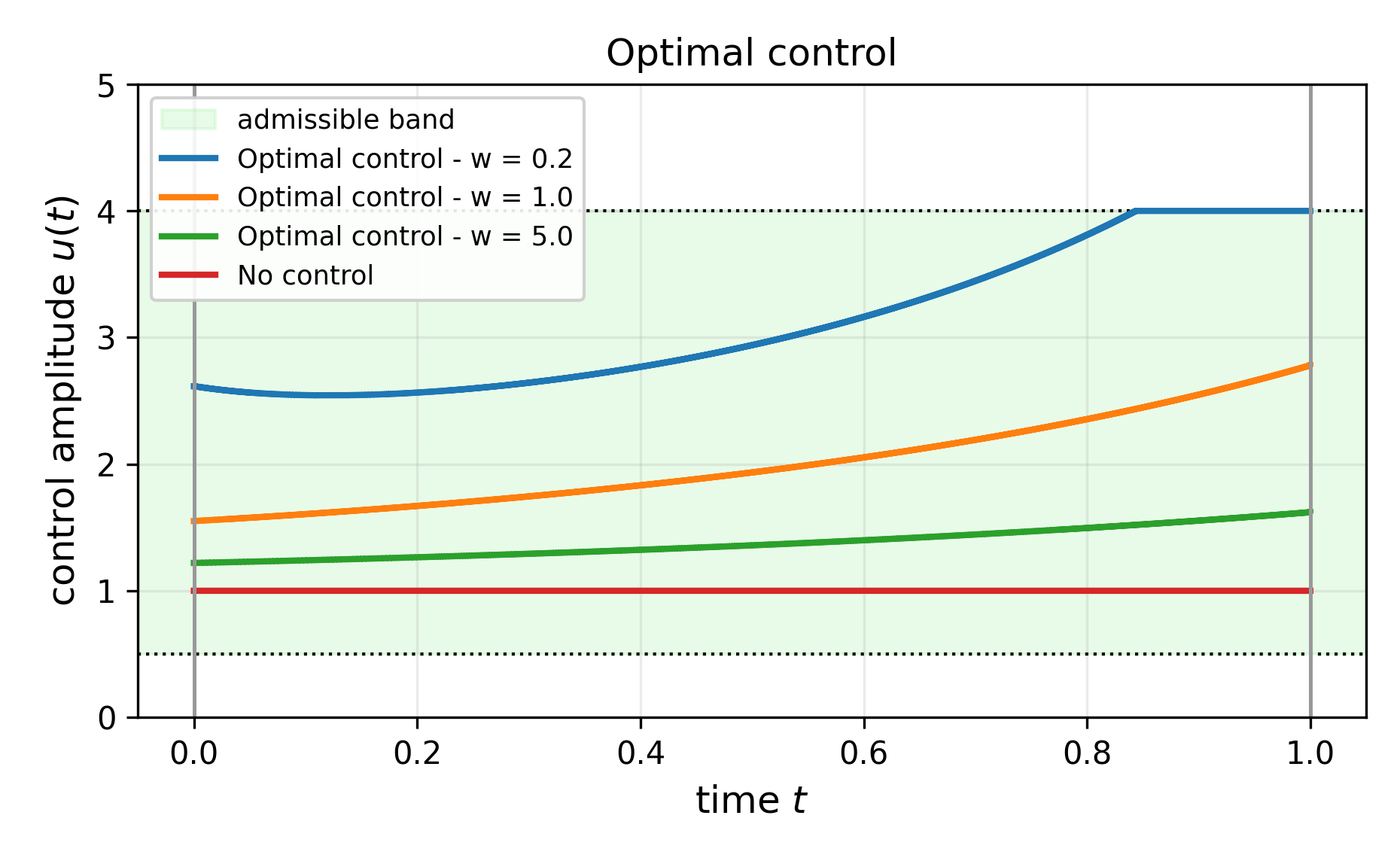}
    \caption{(Locally) optimal controls with different weights.}
    \label{fig:controls_A}
  \end{subfigure}\hfill
  \begin{subfigure}[t]{0.48\textwidth}
    \centering
    \includegraphics[width=\linewidth]{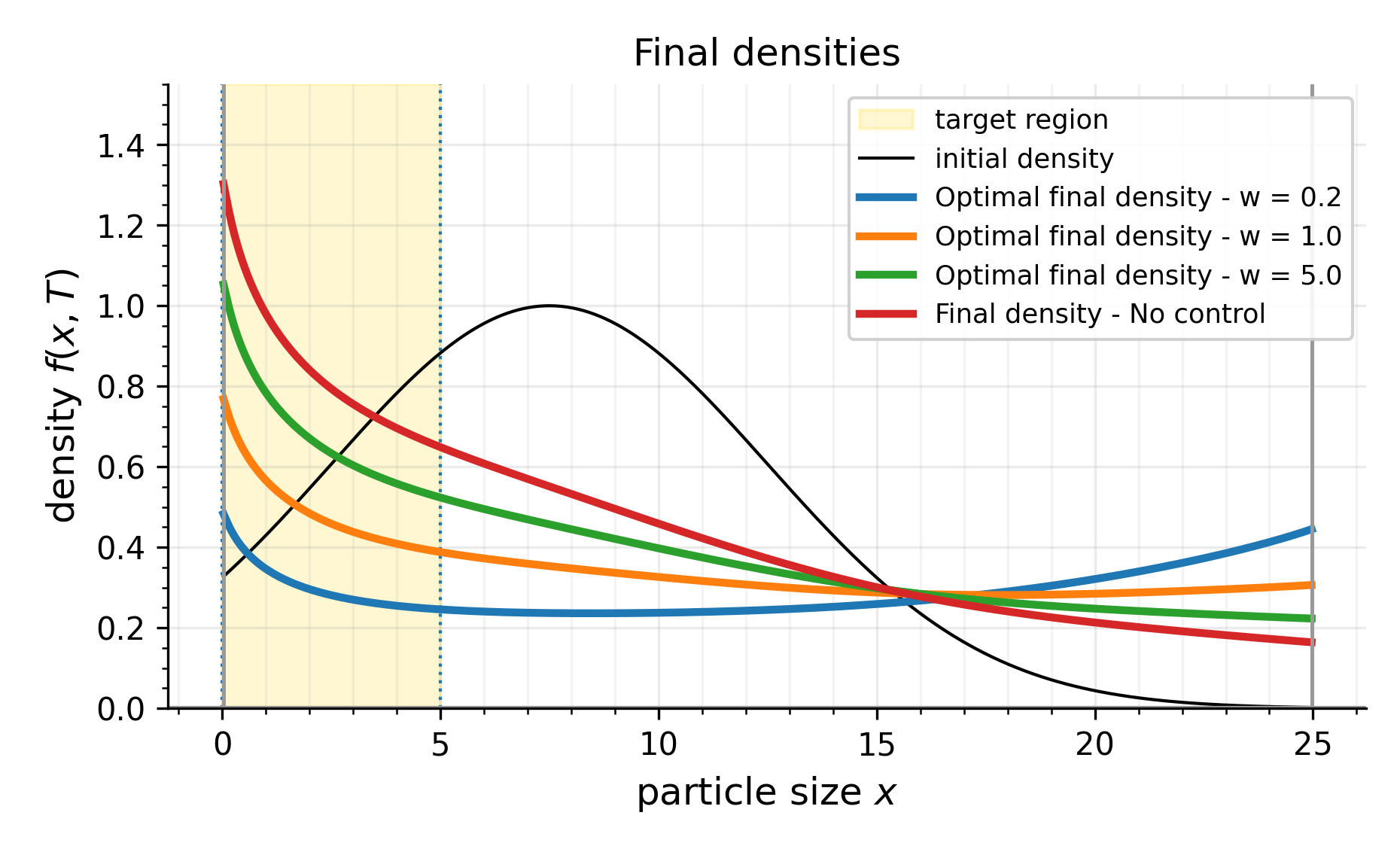}
    \caption{Terminal densities with controls and without.}
    \label{fig:density_A}
  \end{subfigure}
  \caption{ (Locally) optimal controls and corresponding terminal densities.}
  \label{fig:controls-densities}
\end{figure}

\section{Conclusion and perspectives}

We have studied a coagulation–fragmentation model in which a scalar control
$\control(t)\in[u_{\min},u_{\max}]$ rescales the coagulation kernel,
$\hat K(t,x,y)=\control(t)\,K(x,y)$. Working in the weighted space $X_{0,1}$, we established well-posedness of the controlled dynamics and
existence of optimal controls. A $\Gamma$-convergence result then yields
robustness of optimal solutions with respect to perturbations of the kernels,
with particular emphasis on approximating unbounded coagulation kernels by
truncations.

In the case of bounded coagulation kernels, linearising along a trajectory and
formulating the adjoint in the dual space $X_{\infty,1}$ leads to a
weak-$\ast$ well-posed backward equation and a Pontryagin-type minimum
principle, together with a gradient representation.  Under a natural Lipschitz
assumption on the terminal cost, this gradient depends Lipschitz continuously
on the control, and at the continuous level this regularity yields convergence
of the projected-gradient method with Armijo backtracking. Section~\ref{sec:numerics}
provides a proof-of-concept finite-volume implementation of this strategy. The
numerical experiment shows that a single low-dimensional actuator can
substantially reshape the size distribution within a prescribed window.

Possible directions for future work include:
\begin{enumerate}[label=(\alph*)]
  \item \textbf{More general kernels.} Extend the analysis to coagulation and
  fragmentation kernels $K$, $\alpha$ and $b$ that allow for \emph{gelation} or
  \emph{shattering}. A natural goal in this
  regime is to quantify and ultimately minimise the mass loss caused by these
  extreme phenomena.

  \item \textbf{Control of fragmentation.}  
  Allow the optimiser to modulate the break-up rate $\alpha$ in addition to
  scaling the coagulation kernel. This second actuator can restore influence
  when $\control$ alone is weak and permits steering of the size spectrum by
  promoting or suppressing the production of small clusters. The resulting
  bilinear–affine control problem, with two interacting adjoints, raises both
  analytic and numerical questions, while closely reflecting industrial
  practice.

  \item \textbf{Size-dependent controls.} A natural way to enrich the control action is to let it depend on particle size. A fully general actuator
  $u(t,x,y)$ acting on all pairs of sizes, however, is neither very practical
  nor easy to identify in applications. A more structured option is to let the
  control depend only on the total size of the colliding pair, for example
  through a law of the form
  $u(t,x,y)=\hat u\bigl(t,a(x+y)\bigr)$
  with a given (possibly nonlinear) function $a$. In this setting the optimiser can favour or penalise the creation of particles according to
  their total size, while retaining a relatively low-dimensional control
  description. Such size-dependent actuators would likely enlarge the reachable set and raise interesting questions
  concerning controllability, optimisation, and numerical approximation.
  \item \textbf{Finite-volume analysis.}
  On the numerical side, it would be of interest to develop a finite-volume
  analysis for the adjoint equation, together with a complete error analysis
  for the resulting gradient-based optimisation scheme.
\end{enumerate}

\section*{Acknowledgements}

Part of this work was carried out while the author was affiliated with the Chair for Dynamics, Control, Machine Learning and Numerics at Friedrich-Alexander University Erlangen-Nürnberg. The author gratefully acknowledges the Chair for its support and stimulating research environment. He is especially grateful to Professor Enrique Zuazua for introducing him to the topic and for his valuable guidance and suggestions.

\section*{Data Availability Statement}

No experimental data were collected for this study. The code used to generate the numerical experiments and supporting simulation data is openly available on GitHub at \url{https://github.com/enricosartor/coagulation-fragmentation-control}. An archived version of the repository is available at DOI: \url{https://doi.org/10.5281/zenodo.19593092}.

\bibliographystyle{plain}
\bibliography{bibliography}

@book{banasiak2019analytic,
  title={Analytic {M}ethods for {C}oagulation-{F}ragmentation {M}odels, {V}olume {II}},
  author={Banasiak, J. and Lamb, W. and Lauren{\c{c}}ot, P.},
  year={2019},
  publisher={Chapman and Hall/CRC}
}

@book{banasiak2019analytic1,
  title={{A}nalytic {M}ethods for {C}oagulation-{F}ragmentation {M}odels, {V}olume {I}},
  author={Banasiak, J. and Lamb, W. and Lauren{\c{c}}ot, P.},
  year={2019},
  publisher={Chapman and Hall/CRC}
}

@book{flory1953principles,
  title={Principles of polymer chemistry},
  author={Flory, P. J.},
  year={1953},
  publisher={Cornell University Press}
}

@book{matyjaszewski2002handbook,
  title={Handbook of radical polymerization},
  author={Matyjaszewski, K. and Davis, T. P.},
  volume={922},
  year={2002},
  publisher={Wiley Online Library}
}

@article{jacobson2005enhanced,
  title={Enhanced coagulation due to evaporation and its effect on nanoparticle evolution},
  author={Jacobson, M. Z. and Kittelson, D. B. and Watts, W. F.},
  journal={Environmental Science \& Technology},
  volume={39},
  number={24},
  pages={9486--9492},
  year={2005},
  publisher={ACS Publications}
}

@book{hinds2022aerosol,
  title={Aerosol technology: properties, behavior, and measurement of airborne particles},
  author={Hinds, W. C. and Zhu, Y.},
  year={2022},
  publisher={John Wiley \& Sons}
}

@article{smoluchowski1916drei,
  title={Drei {V}ortrage uber {D}iffusion, {B}rownsche {B}ewegung und {K}oagulation von {K}olloidteilchen},
  author={Smoluchowski, M. V.},
  journal={Zeitschrift fur Physik},
  volume={17},
  pages={557--585},
  year={1916}
}

@article{smoluchowski1918versuch,
  title={Versuch einer mathematischen {T}heorie der {K}oagulationskinetik kolloider {L}{\"o}sungen},
  author={Smoluchowski, M. V.},
  journal={Zeitschrift f{\"u}r physikalische Chemie},
  volume={92},
  number={1},
  pages={129--168},
  year={1918},
  publisher={De Gruyter Oldenbourg}
}

@article{muller1928allgemeinen,
  title={Zur allgemeinen {T}heorie der raschen {K}oagulation: {D}ie {K}oagulation von {S}t{\"a}bchen-und {B}l{\"a}ttchenkolloiden; die {T}heorie beliebig polydisperser {S}ysteme und der {S}tr{\"o}mungskoagulation},
  author={M{\"u}ller, H.},
  journal={Kolloidchemische Beihefte},
  volume={27},
  pages={223--250},
  year={1928},
  publisher={Springer}
}

@article{melzak1957scalarI,
  title={A scalar transport equation},
  author={Melzak, Z. A.},
  journal={Transactions of the American Mathematical Society},
  volume={85},
  number={2},
  pages={547--560},
  year={1957},
  publisher={JSTOR}
}

@article{melzak1957scalarII,
  title={A scalar transport equation. {I}{I}.},
  author={Melzak, Z. A.},
  journal={Michigan Mathematical Journal},
  volume={4},
  number={3},
  pages={193--206},
  year={1957},
  publisher={University of Michigan, Department of Mathematics}
}

@article{van1984size,
  title={Size distribution in the polymerisation model {A}f{RB}g},
  author={Van Dongen, P. G. J. and Ernst, M. H.},
  journal={Journal of Physics A: Mathematical and General},
  volume={17},
  number={11},
  pages={2281},
  year={1984},
  publisher={IOP Publishing}
}

@article{aizenman1979convergence,
  title={Convergence to equilibrium in a system of reacting polymers},
  author={Aizenman, M. and Bak, T. A.},
  journal={Communications in Mathematical Physics},
  volume={65},
  number={3},
  pages={203--230},
  year={1979},
  publisher={Springer}
}

@article{ziff1985kinetics,
  title={The kinetics of cluster fragmentation and depolymerisation},
  author={Ziff, R. M. and McGrady, E. D.},
  journal={Journal of Physics A: Mathematical and General},
  volume={18},
  number={15},
  pages={3027},
  year={1985},
  publisher={IOP Publishing}
}

@article{samsel1982kinetics,
  title={Kinetics of rouleau formation. {I}. {A} mass action approach with geometric features},
  author={Samsel, R. W. and Perelson, A. S.},
  journal={Biophysical Journal},
  volume={37},
  number={2},
  pages={493--514},
  year={1982},
  publisher={Elsevier}
}

@article{samsel1984kinetics,
  title={Kinetics of rouleau formation. {II}. {R}eversible reactions},
  author={Samsel, R. W. and Perelson, A. S.},
  journal={Biophysical Journal},
  volume={45},
  number={4},
  pages={805--824},
  year={1984},
  publisher={Elsevier}
}

@book{friedlander2000smoke,
  title={Smoke, {D}ust, and {H}aze: {F}undamentals of {A}erosol {D}ynamics},
  author={Friedlander, S. K.},
  volume={198},
  year={2000},
  publisher={Oxford University Press, New York}
}

@article{vigil1989stability,
  title={On the stability of coagulation—fragmentation population balances},
  author={Vigil, R. D. and Ziff, R. M.},
  journal={Journal of Colloid and Interface Science},
  volume={133},
  number={1},
  pages={257--264},
  year={1989},
  publisher={Elsevier}
}

@book{seinfeld2016atmospheric,
  title={Atmospheric chemistry and physics: from air pollution to climate change},
  author={Seinfeld, J. H. and Pandis, S. N.},
  year={2016},
  publisher={John Wiley \& Sons}
}

@article{stewart,
  title={A global existence theorem for the general coagulation--fragmentation equation with unbounded kernels},
  author={Stewart, I. W. and Meister, E.},
  journal={Mathematical Methods in the Applied Sciences},
  volume={11},
  number={5},
  pages={627--648},
  year={1989},
  publisher={Wiley Online Library}
}

@article{carr1992asymptotic,
  title={Asymptotic behaviour of solutions to the coagulation--fragmentation equations. {I}. {T}he strong fragmentation case},
  author={Carr, J.},
  journal={Proceedings of the Royal Society of Edinburgh Section A: Mathematics},
  volume={121},
  number={3-4},
  pages={231--244},
  year={1992},
  publisher={Royal Society of Edinburgh Scotland Foundation}
}

@article{escobedo2005self,
  title={On self-similarity and stationary problem for fragmentation and coagulation models},
  author={Escobedo, M. and Mischler, S. and Ricard, M. R.},
  journal={Annales de l'Institut Henri Poincar{\'e} C},
  volume={22},
  number={1},
  pages={99--125},
  year={2005}
}

@article{hendriks1983coagulation,
  title={Coagulation equations with gelation},
  author={Hendriks, E. M. and Ernst, M. H. and Ziff, R. M.},
  journal={Journal of Statistical Physics},
  volume={31},
  pages={519--563},
  year={1983},
  publisher={Springer}
}

@article{mcgrady1987shattering,
  title={‘‘{S}hattering’’ transition in fragmentation},
  author={McGrady, E. D. and Ziff, R. M.},
  journal={Physical Review Letters},
  volume={58},
  number={9},
  pages={892},
  year={1987},
  publisher={APS}
}

@inproceedings{doumic2018estimating,
  title={Estimating the division rate and kernel in the fragmentation equation},
  author={Doumic, M. and Escobedo, M. and Tournus, M.},
  booktitle={Annales de l'Institut Henri Poincar{\'e} C, Analyse non lin{\'e}aire},
  volume={35},
  number={7},
  pages={1847--1884},
  year={2018},
  organization={Elsevier}
}

@article{alomari2013recovery,
  title={Recovery of the integral kernel in the kinetic fragmentation equation},
  author={Alomari, O. and Dubovski, P. B.},
  journal={Inverse Problems in Science and Engineering},
  volume={21},
  number={1},
  pages={171--181},
  year={2013},
  publisher={Taylor \& Francis}
}

@article{andrejevic2021model,
  title={A model for the fragmentation kinetics of crumpled thin sheets},
  author={Andrejevic, J. and Lee, L. M. and Rubinstein, S. M. and Rycroft, C. H.},
  journal={Nature Communications},
  volume={12},
  number={1},
  pages={1470},
  year={2021},
  publisher={Nature Publishing Group UK London}
}

@article{timar2010new,
  title={New universality class for the fragmentation of plastic materials},
  author={Tim{\'a}r, G. and Bl{\"o}mer, J. and Kun, F. and Herrmann, H. J.},
  journal={Physical Review Letters},
  volume={104},
  number={9},
  pages={095502},
  year={2010},
  publisher={APS}
}

@book{pazy2012semigroups,
  title={Semigroups of linear operators and applications to partial differential equations},
  author={Pazy, A.},
  volume={44},
  year={2012},
  publisher={Springer Science \& Business Media}
}

@book{hinze2008optimization,
  title={Optimization with {PDE} constraints},
  author={Hinze, M. and Pinnau, R. and Ulbrich, M. and Ulbrich, S.},
  volume={23},
  year={2009},
  publisher={Springer Science \& Business Media}
}

@article{kumar1996solution,
  title={On the solution of population balance equations by discretization—{I}. {A} fixed pivot technique},
  author={Kumar, S. and Ramkrishna, D.},
  journal={Chemical Engineering Science},
  volume={51},
  number={8},
  pages={1311--1332},
  year={1996},
  publisher={Elsevier}
}

@article{giri2012uniqueness,
  title   = {On the uniqueness for coagulation and multiple fragmentation equation},
  author  = {Giri, A. K.},
  journal = {Kinetic and Related Models},
  volume  = {6},
  number  = {3},
  pages   = {589--599},
  year    = {2013},
}

@article{doumic2021inverse,
  title={An inverse problem: recovering the fragmentation kernel from the short-time behaviour of the fragmentation equation},
  author={Doumic, M. and Escobedo, M. and Tournus, M.},
  journal={Annales Henri Lebesgue},
  volume={7},
  pages={621--671},
  year={2024},
  publisher={Centre Mersenne}
}

@article{coron2015optimization,
  title={Optimization of an amplification protocol for misfolded proteins by using relaxed control},
  author={Coron, J.-M. and Gabriel, P. and Shang, P.},
  journal={Journal of Mathematical Biology},
  volume={70},
  number={1-2},
  pages={289--327},
  year={2015},
  publisher={Springer}
}

@incollection{nagy2008distribution,
  title={A population balance model approach for crystallization product engineering via distribution shaping control},
  author={Nagy, Z. K.},
  booktitle={18th European Symposium on Computer Aided Process Engineering -- ESCAPE 18},
  series={Computer Aided Chemical Engineering},
  volume={25},
  pages={139--144},
  year={2008},
  publisher={Elsevier}
}

@article{chyba2015optimal,
  title={Optimal geometric control applied to the protein misfolding cyclic amplification process},
  author={Chyba, M. and Coron, J.-M. and Gabriel, P. and Jacquemard, A. and Patterson, G. and Picot, G. and Shang, P.},
  journal={Acta Applicandae Mathematicae},
  volume={135},
  number={1},
  pages={145--173},
  year={2015},
  publisher={Springer}
}

@book{engel2000one,
  title={One-parameter semigroups for linear evolution equations},
  author={Engel, K.-J. and Nagel, R.},
  year={2000},
  publisher={Springer}
}

@book{banasiak2006perturbations,
  title={Perturbations of positive semigroups with applications},
  author={Banasiak, J. and Arlotti, L.},
  year={2006},
  publisher={Springer}
}

@book{cazenave1998semilinear,
  title={An Introduction to Semilinear Evolution Equations},
  author={Cazenave, T. and Haraux, A.},
  year={1998},
  publisher={Clarendon Press}
}

@article{hofmann2017optimal,
  title={Optimal control of univariate and multivariate population balance systems involving external fines removal},
  author={Hofmann, S. and Bajcinca, N. and Raisch, J. and Sundmacher, K.},
  journal={Chemical Engineering Science},
  volume={168},
  pages={101--123},
  year={2017},
  publisher={Elsevier}
}

@article{de2021optimal,
  title={Optimal control of crystal size and shape in batch crystallization using a bivariate population balance modeling},
  author={de Moraes, M. G. F. and Grover, M. A. and de Souza Jr, M. B. and Lage, P. L. C. and Secchi, A. R.},
  journal={IFAC-PapersOnLine},
  volume={54},
  number={3},
  pages={653--660},
  year={2021},
  publisher={Elsevier}
}

\end{document}